\documentclass[reqno,12pt]{amsart}


\pdfoutput=1


\usepackage{color} 
\usepackage{graphicx}
\usepackage{pstricks}
\usepackage{ifpdf}
\ifpdf
    \usepackage[pdftex]{hyperref}
    \hypersetup{
        unicode=false,          
        pdftoolbar=true,        
        pdfmenubar=true,        
        pdffitwindow=false,     
        pdfstartview={FitH},    
        pdftitle={MCP Article},      
        pdfauthor={Michael Holst},   
        pdfsubject={Mathematics},    
        pdfcreator={Michael Holst},  
        pdfproducer={Michael Holst}, 
        pdfkeywords={PDE, analysis, mathematical physics}, 
        pdfnewwindow=true,      
        colorlinks=true,        
        linkcolor=red,          
        citecolor=blue,         
        filecolor=magenta,      
        urlcolor=cyan           
    }

    \typeout{====== Invoked by pdflatex ======================}
\else

    \typeout{====== Invoked by latex ======================}
\fi

\usepackage{times}
\usepackage{amsfonts}

\usepackage{amsmath}
\usepackage{amsthm}
\usepackage{amssymb}
\usepackage{amsbsy}
\usepackage{amscd}

\usepackage{verbatim}
\usepackage{subfigure}




\newtheorem{theorem}{Theorem}[section]
\newtheorem{corollary}[theorem]{Corollary}
\newtheorem{lemma}[theorem]{Lemma}

\newtheorem{proposition}[theorem]{Proposition}

\newtheorem{definition}[theorem]{Definition}
\newtheorem{example}[theorem]{Example}
\newtheorem{remark}[theorem]{Remark}

\numberwithin{equation}{section}  





  \newcounter{mnote}
  \setcounter{mnote}{0}
  
  \let\oldmarginpar\marginpar
    \renewcommand\marginpar[1]{\-\oldmarginpar[\raggedleft\footnotesize #1]%
    {\raggedright\footnotesize #1}}



\definecolor{myblue}{rgb}{0.2,0.2,0.7}
\definecolor{mygreen}{rgb}{0,0.6,0}
\definecolor{mycyan}{rgb}{0,0.6,0.6}
\definecolor{myred}{rgb}{0.9,0.2,0.2}
\definecolor{mymagenta}{rgb}{0.9,0.2,0.9}
\definecolor{mywhite}{rgb}{1.0,1.0,1.0}
\definecolor{myblack}{rgb}{0.0,0.0,0.0}

\newcommand*{\ang}[1]{\left\langle #1 \right\rangle}
%

%
\newcommand{\beq}{\begin{equation}}
\newcommand{\eeq}{\end{equation}}
\newcommand{\beqa}{\begin{eqnarray}}
\newcommand{\eeqa}{\end{eqnarray}}
%
%

%


%





%


%
     
\newcommand{\geqs}{\geqslant}      
      
%


\newcommand{\eps}{\varepsilon}


%

%

%

%

%

%

%

%


%




\newcommand{\N}{{\mathbb N}}       
\newcommand{\R}{{\mathbb R}}       


\newcommand{\cS}{{\mathcal S}}



%


%


%



\def\Dd{\Delta}

\def\pp{\partial}
\def\veps{\varepsilon}



%


%


%

%







\newcommand*{\pref}[1]{(\ref{#1})}

\newcommand*{\paren}[1]{\left(#1\right)}

\newcommand*{\uns}{{\tilde{u}}}

\newcommand*{\ra}{{\rightarrow}}
\newcommand*{\tone}{{\theta_1}}
\newcommand*{\ttwo}{{\theta_2}}
\newcommand{\I}{I}          
\newcommand{\II}{I\!I}          
\newcommand{\III}{I\!I\!I}          


\setlength{\topmargin}{-0.5in}
\setlength{\textheight}{9.5in}
\setlength{\textwidth}{5.85in}
\setlength{\oddsidemargin}{0.325in}
\setlength{\evensidemargin}{0.325in}
\setlength{\marginparwidth}{1.0in}

\setcounter{tocdepth}{5}

\begin{document}

\title[A General Family of Regularized Navier-Stokes and MHD Models]
      {Analysis of a General Family of \\ Regularized Navier-Stokes and MHD Models}

\author[M. Holst]{Michael Holst}
\email{mholst@math.ucsd.edu}
\address{Department of Mathematics\\
         University of California San Diego\\
         La Jolla CA 92093}

\thanks{MH was supported in part by NSF Awards~0715146, 0411723,
and 0511766, and DOE Awards DE-FG02-05ER25707 and DE-FG02-04ER25620.}

\author[E. Lunasin]{Evelyn Lunasin}
\email{lunasin@math.arizona.edu}
\address{Department of Mathematics\\
         University of Arizona\\
         Tucson, AZ 85721}

\author[G. Tsogtgerel]{Gantumur Tsogtgerel}
\email{gantumur@math.mcgill.ca}
\address{Department of Mathematics and Statistics\\
         McGill University\\
         Montreal, Quebec H3A 2K6, Canada}

\thanks{EL and GT were supported in part by NSF Awards~0715146 and 0411723.}

\date{\today}

\keywords{Navier-Stokes equations, Euler Equations, Regularized Navier-Stokes, Navier-Stokes-$\alpha$, Leray-$\alpha$, Modified-Leray-$\alpha$, Simplified Bardina, Navier-Stokes-Voight, Magnetohydrodynamics, MHD}

\begin{abstract}
We consider a general family of regularized Navier-Stokes and 
Magnetohydrodynamics (MHD) models 
on $n$-dimensional smooth compact Riemannian manifolds with or without
boundary, with $n \geqs 2$.
This family captures most of the specific regularized models that have been 
proposed and analyzed in the literature, including 
the Navier-Stokes equations, 
the Navier-Stokes-$\alpha$ model, 
the Leray-$\alpha$ model,
the Modified Leray-$\alpha$ model, 
the Simplified Bardina model,
the Navier-Stokes-Voight model, 
the Navier-Stokes-$\alpha$-like models,
and certain MHD models, 
in addition to representing a larger 3-parameter family of models not 
previously analyzed.
This family of models has become particularly important in
the development of mathematical and computational models of turbulence.
We give a unified analysis of the entire three-parameter family of models using 
only abstract mapping properties of the principal dissipation and smoothing
operators, and then use assumptions about the specific form of the 
parameterizations, leading to specific models, only when necessary to obtain 
the sharpest results.
We first establish existence and regularity results, and under
appropriate assumptions show uniqueness and stability.
We then establish some results for singular perturbations,
which as special cases include the inviscid limit of viscous models 
and the $\alpha \rightarrow 0$ limit in $\alpha$ models.
Next we show existence of a global attractor for the general model,
and then give estimates for the dimension of the global attractor and the 
number of degrees of freedom in terms of a generalized Grashof number.
We then establish some results on determining operators for the
two distinct subfamilies of dissipative and non-dissipative models.
We finish by deriving some new length-scale estimates in terms of
the Reynolds number, which allows for recasting the Grashof number-based
results into analogous statements involving the Reynolds number.
In addition to recovering most of the existing results on existence, 
regularity, uniqueness, stability, attractor existence and dimension, 
and determining operators for the well-known specific members of this 
family of regularized Navier-Stokes and MHD models, the framework we 
develop also makes possible a number of new results for all models in the 
general family, including some new results for several of the well-studied
models.
Analyzing the more abstract generalized model allows for a simpler analysis 
that helps bring out the core common structure of the various regularized 
Navier-Stokes and magnetohydrodynamics models, and also helps clarify the 
common features of many of the existing and new results.
To make the paper reasonably self-contained, we include supporting
material on spaces involving time,
Sobolev spaces, and Gronwall-type inequalities.
\end{abstract}

\maketitle

\clearpage

\vspace*{-1.0cm}
{\footnotesize
\tableofcontents
}
\vspace*{-1.2cm}

\section{Introduction}
\label{s:intro}
The mathematical theory for global existence and regularity of solutions to the three-dimensional Navier-Stokes equations (3D NSE) is considered one of the most challenging unsolved mathematical problems of our time~\cite{Devlin02}.
It is also well-known that direct numerical simulation of NSE for many physical applications with high Reynolds number flows is intractable even using state-of-the-art numerical methods on the most advanced supercomputers available.
Over the last three decades, researchers have developed turbulence models as an attempt to side-step this simulation barrier; the aim of turbulence models is to capture the large, energetic eddies without having to compute the smallest dynamically relevant eddies, by instead modeling the effects of small eddies in terms of the large scales in both NSE and magnetohydrodynamics (MHD) flows.

In 1998, the globally well-posed 3D Navier-Stokes-$\alpha$ (NS-$\alpha$) equations (also known as the viscous Camassa-Holm equations and Lagrangian averaged Navier-Stokes-$\alpha$ model) was proposed as a sub-grid scale turbulence model~\cite{CFHOT98,CFHOT99a,CFHOT99b,FHT01,FHT02,HMR98,MaSh01a,MM03} (see also~\cite{ZhFa09b} for $n$-dimensional viscous Camassa-Holm equations in the whole space).
The inviscid and unforced version of 3D NS-$\alpha$ was introduced in~\cite{HMR98} based on Hamilton's variational principle subject to the incompressibility constraint $\mbox{div}\; u =0$ (see also~\cite{MaSh03a}).
By adding the correct viscous term and the forcing $f$ in an {\it ad hoc} fashion, the authors in~\cite{CFHOT98,CFHOT99a,CFHOT99b} and~\cite{FHT02} obtain the NS-$\alpha$ system.
In references~\cite{CFHOT98,CFHOT99a,CFHOT99b} it was found that the analytical steady state solutions for the 3D NS-$\alpha$ model compared well with averaged experimental data from turbulent flows in channels and pipes for a wide range of large Reynolds numbers.
It was this fact which led the authors of~\cite{CFHOT98,CFHOT99a,CFHOT99b} to suggest that the NS-$\alpha$ model be used as a closure model for the Reynolds-averaged equations.
Since then, it has been found that there is in fact a whole family of globally well-posed `$\alpha$'- models which yield similarly successful comparisons with empirical data; among these are the Clark-$\alpha$ model~\cite{CHT05}, the Leray-$\alpha$ model~\cite{CHOT05}, the modified Leray-$\alpha$ model~\cite{ILT06}, and the simplified Bardina model~\cite{CLT06, LL06}.

In addition to the early success of the $\alpha$-models mentioned above, the validity of the original $\alpha$-model (namely, the NS-$\alpha$ model) as a sub-grid scale turbulence model was also tested numerically in~\cite{CHMZ99,MM03}.
In the numerical simulation of the 3D NS-$\alpha$ model, the authors of~\cite{CHMZ99,GH99,GH03,MM03} showed that the large scales of motion bigger than $\alpha$ (the length scale associated with the width of the filter which regularizes the velocity field) in a turbulent flow are captured (see also~\cite{LKTT07,LKT08} for the 2D case and~\cite{CT09} for the rate of convergence of the 2D $\alpha$-models to NSE).
For scales of motion smaller than the length scale $\alpha$, the energy spectra decays faster in comparison to that of NSE.
This numerical observation has been justified analytically in~\cite{FHT01}.
In direct numerical simulation, the fast decay of the energy spectra for scales of motion smaller than the supplied filter length represents reduced grid requirements in simulating a flow.
The numerical study of~\cite{CHMZ99} gives the same results.
The same results hold as well in the study of the Leray-$\alpha$ model in~\cite{CHOT05,GH99}.

The NS-$\alpha$ turbulence model has also been implemented in a primitive equation ocean model (see~\cite{HHPW08a,HHPW08b,PHW08}).
Their simulations with the NS-$\alpha$ in an idealized channel domain was shown to produce statistics which resembles doubling of resolution.
For other applications of $\alpha$ regularization techniques, see~\cite{BLT07} for application to the quasi-geostrophic equations,~\cite{KhTi07} for application to Birkhoff-Rott approximation dynamics of vortex sheets of the 2D Euler equations, and~\cite{LiTi07,MMP05a,MMP05b} for applications to incompressible magnetohydrodynamics equations.
In~\cite{CMT08}, an $\alpha$-regularized nonlinear Schr{\"o}dinger equation 
was proposed for the purpose of a numerical regularization that is hoped to shed some light on the profile of the blow-up solutions to the nonlinear Schr{\"o}dinger equations.
Also, in~\cite{BFMMW05}, the authors extend the derivation of the inviscid version of NS-$\alpha$ (called Euler-$\alpha$, also known as Lagrangian averaged Euler-$\alpha$) to barotropic compressible flows.

Perhaps the newest addition to the family of $\alpha$ turbulence model is the Navier-Stokes-Voight (NSV) equations proposed in~\cite{CLT06}, which turn out to also model the dynamics of Kelvin-Voight viscoelastic incompressible fluids as introduced in~\cite{Osk73,Osk80}.
The statistical properties of 3D NSV have been studied in~\cite{LRT09}.
The long-term asymptotic behavior of solutions is studied in~\cite{KLT07,KT07}.
In~\cite{EHL07a}, the NSV was used in the context of image inpainting.
The numerical study of NSV in~\cite{EHL07a} suggests that the NSV, in comparison with NSE, can provide a more efficient numerical process when automating the inpainting procedure for certain classes of images.
It is worthwhile to note that the inviscid NSV coincide with the inviscid simplified Bardina model which is shown in~\cite{CLT06} to be globally well-posed.
This new regularization technique for Euler equations prevents the risk of damping too much energy in the small scales which could lead to unrealistic numerical results.

As a representative of the more general model considered in this paper (described in detail in Section~\ref{s:prelim}), we consider first the following somewhat simpler constrained initial value problem on an 3-dimensional flat torus $\mathbb{T}^3$:
\begin{equation}
\begin{split}
\pp_t u + Au + (M u\cdot \nabla) (N u) + \chi \nabla (M u)^T\cdot (N u) 
  + \nabla p &= f(x),\\
\nabla\cdot u &= 0,\\
u(0) & = u_0,
\end{split}
\label{e:pde}
\end{equation}
where $A$, $M$, and $N$ are bounded linear operators having certain mapping properties, and where $\chi$ is either 1 or 0.
As in prior work on regularized models of the Navier-Stokes, and Euler equations, we employ a single real parameter $\theta$ to control the strength of the dissipation operator $A$.
We then introduce two parameters which control the degree of smoothing in the operators $M$ and $N$, namely $\theta_1$ and $\theta_2$, respectively, when $\chi = 0$, and $\theta_2$ and $\theta_1$, respectively, when $\chi = 1$.
Some examples of operators $A$, $M$, and $N$ which satisfy the mapping assumptions we will need in this paper are
\begin{equation}
A = (- \Delta)^{\theta},
\quad
M = (I - \alpha^2\Delta)^{-\theta_1},
\quad
N = (I - \alpha^2\Delta)^{-\theta_2},
\end{equation}
for fixed positive real number $\alpha$ and for specific choices of the real parameters $\theta$, $\theta_1$, and $\theta_2$.
However, we emphasize that the abstract mapping assumptions we employ are more general, and as a result do not require any specific form of the parameterizations of $A$, $M$, and $N$; this abstraction allows~\pref{e:pde} to recover most of the existing regularization models that have been previously studied, as well as to represent a much larger three-parameter family of models that have not been explicitly studied in detail.
As a result, the system in \pref{e:pde} includes the Navier-Stokes equations and the various previously studied $\alpha$ turbulence models as special cases, namely, the Navier-Stokes\,-\,$\alpha$ model~\cite{CFHOT98,CFHOT99a,CFHOT99b,FHT01,FHT02,HMR98,MM03}, Leray\,-\,$\alpha$ model~\cite{CHOT05}, modified Leray\,-\,$\alpha$ model (ML\,-\,$\alpha$)~\cite{ILT06}, simplified Bardina model (SBM)~\cite{CLT06}, Navier-Stokes-Voight (NSV) model~\cite{KLT07,KT07,Osk73,Osk80}, and the Navier-Stokes\,-\,$\alpha$\,-\,like (NS\,-\,$\alpha$\,-\,like) models~\cite{OlTi07}.
For clarity, some of the specific well-known regularization models recovered by~\pref{e:pde} for particular choices of the operators $A, M, N \mbox{ and } \chi$ are listed in Table~\ref{t:spec}.

{\small
\begin{table}[ht]
\caption{\footnotesize
Some special cases of the model \pref{e:pde} with $\alpha>0$, 
and with $\mathcal{S}=(I-\alpha\Delta)^{-1}$
and $\mathcal{S}_{\theta_2}~=~[I~+~(-\alpha\Delta)^{\theta_2}]^{-1}$.
}

\begin{center}
\begin{tabular}{|c|c|c|c|c|c|c|c|} 
\hline\hline
Model  &  NSE      & Leray-$\alpha$ & ML-$\alpha$ & SBM        & NSV               & NS-$\alpha$       & NS-$\alpha$-like \\ \hline
 $A$   & $-\nu\Dd$ & $-\nu\Dd$      & $-\nu\Dd$   & $-\nu\Dd$  & $-\nu\Dd\cS$ & $-\nu\Dd$         & $\nu (-\Dd)^{\theta}$\\
 $M$   & $I$       & $\cS$     & $I$         & $\cS$ & $\cS$        & $\cS$        & $\cS_\ttwo$\\
 $N$   & $I$       & $I$            & $\cS$  & $\cS$ & $\cS$        & $I$               & $I$\\
$\chi$ & 0         &0               &0            & 0          &0                  &1                  &1\\
\hline\hline   
\end{tabular}
\end{center}
\label{t:spec}
\end{table}
}

Our main goal in this paper is to develop well-posedness and long-time dynamics results for the entire three-parameter family of models, and then subsequently recover the existing results of this type for the specific regularization models that have been previously studied.
Along these lines, we first establish a number of results for the entire three-parameter family, including results on existence, regularity, uniqueness, stability, linear and nonlinear perturbations (with the inviscid and $\alpha \rightarrow 0$ limits as special cases), existence and finite dimensionality of global attractors, and bounds on the number of determining degrees of freedom.
Elaborating on the latter a bit more, for $\theta>0$, we derived a lower bound for the number of degrees of freedom $m$ given by $m\geq G^{n/\theta}$, where $G$ is the Grashof number and $n$ is the spatial dimension.
A lower bound for the nondissipative case is also established.
These results give necessary and/or sufficient conditions on the ranges of the three parameters for dissipation and smoothing in order to obtain each result, and we indicate where appropriate which particular regularization models are covered in the allowable parameter ranges for each result.
In the final section of the paper, we develop some tools for relating the Grashof number-based results to analogous statements involving the Reynolds number.
Analyzing a generalized model based on abstract mapping properties of the principal operators $A$, $M$, and $N$ allows for a simpler analysis that helps bring out the core common structure of the various regularized NSE (as well as regularized magnetohydrodynamics) models, and also helps clarify the common features of many of the existing and new results.

In~\cite{OlTi07}, a two-parameter family of models was studied, corresponding to a subset of those studied here, which we will call here the NS-$\alpha$-like models.
In order to describe this subset of models, let $\theta$ and $\theta_2$ be two nonnegative parameters, and consider the following system on $\mathbb{T}^3$:
\begin{equation}
\begin{split}
\pp_t u + (-\Delta)^{\theta} u + (M u\cdot \nabla) u 
   +  \nabla (M u)^T\cdot u + \nabla p 
&= f, \\
\nabla\cdot u &= 0, \\
\nabla\cdot (Mu) &= 0,\\
u(0) &= u_0,
\end{split}
\label{e:olti}
\end{equation}
where $M=(I+(-\alpha^2\Delta)^{\ttwo})^{-1}$.
This family of NS-$\alpha$-like model equations interpolates between incompressible hyper-dissipative equations and the NS-$\alpha$ models when varying the two nonnegative parameters $\theta$ and $\theta_2$.
This is a special case of \pref{e:pde} with $\theta_1=0$ and $\chi=1$, with the degree of dissipation controlled by the parameter $\theta$ and the degree of nonlinearity controlled by only one parameter $\theta_2$.
In this particular case, the NSE are obtained when $\theta=1$ and $\theta_2=0$, while the NS-$\alpha$ model is obtained when $\theta=\theta_2=1$.
In~\cite{OlTi07}, sufficient conditions on the relationship between $\theta$ and $\theta_2$ are established to guarantee global well-posedness and global regularity of solutions.
Our result here can be viewed as generalizing the global well-posedness and regularity results in~\cite{OlTi07} to a larger three-parameter family using a more abstract framework that does not impose a specific form for the parameterizations, and then also establishing a number of additional new results for the larger three-parameter family, including results on stability, linear and nonlinear perturbations, existence and dimension of global attractors, and on determining operator bounds.

As a subset of the results mentioned above, we list some of the new results that we have obtained for the family of $\alpha$ models as a special case of the more generalized equation \pref{e:pde}.
As far as we know, these results have not been previously established in the literature.
The global existence and uniqueness of solutions for the {\it inviscid} $\alpha$ sub-grid scale turbulence models has been established only for the SBM~\cite{CLT06}.
Here, as a consequence of a more general result, we have established the global existence of a weak solution to the inviscid Leray-$\alpha$-model of turbulence.
In~\cite{FHT02}, the convergence of weak solutions of the NS-$\alpha$ to a weak solution of the NSE as the parameter $\alpha\ra 0$ was established.
Here we have established this convergence result as well for the NS-$\alpha$-like equations.
In addition, we have established for the NS-$\alpha$-like equations, the existence and finite dimensionality of its global attractor, and determining operator bounds.
In the case of Leray-$\alpha$, ML-$\alpha$, and SBM, the results on determining operator bounds also appear to be new.


It is important to note that the general framework here allows for the development of results for certain (regularized or un-regularized) magnetohydrodynamics (MHD) models.
The basic MHD system has the form
\begin{equation}
\label{e:mhd}
\begin{split}
\pp_t u -\nu\Delta u + (u\cdot\nabla) u - (h\cdot\nabla) h
&= \nabla \pi - \textstyle\frac12\nabla|h|^2, \\
\pp_t h -\eta\Delta h + (u\cdot\nabla) h - (h\cdot\nabla) u    
&= 0,\\
\nabla\cdot h=\nabla\cdot u &= 0,
\end{split}
\end{equation}
where the unknowns are the velocity field $u$, the magnetic field $h$, and the pressure $\pi$, and where $\nu>0$ and $\eta>0$ denote the constant kinematic viscosity and constant magnetic diffusivity, respectively.
Our global well-posedness results include for example a particular regularized MHD model
\begin{equation}
\label{e:Leray-alpha-mhd}
\begin{split}
\pp_t u -\nu\Delta u + (Mu\cdot\nabla) u -(Mh\cdot\nabla) h 
&= \nabla \pi - \textstyle\frac12\nabla|h|^2, \\
\pp_t h -\eta\Delta h + (Mu\cdot\nabla) h - (Mh\cdot\nabla) u    
&= 0,
\end{split}
\end{equation}
supplemented with several divergence-free and boundary conditions, where $M=(I-\alpha^2\Delta)^{-1}$.
Note that the system \pref{e:Leray-alpha-mhd} is different from the 3D Leray-$\alpha$-MHD model proposed in~\cite{LiTi07}, where global well-posedness result is still open as in the case of the original MHD equations.
It is also different in nature from the MHD model proposed in~\cite{ZhFa09a}.
For the 3D Leray-$\alpha$-MHD model proposed in~\cite{LiTi07} the magnetic field $h$ is not regularized.
Another regularized model whose global well-posedness result is covered here is the following modified version of the MHD-$\alpha$ model proposed in~\cite{LiTi07},
\begin{equation}
\label{e:mhd-alpha}
\begin{split}
\pp_t u -\nu\Delta u + (Mu\cdot\nabla) u + \nabla (Mu)^T\cdot u -(Mh\cdot\nabla) h 
&= \nabla \pi - \textstyle\frac12\nabla|h|^2, \\
\pp_t h -\eta\Delta h + (Mu\cdot\nabla) h - (Mh\cdot\nabla) u    
&= 0,
\end{split}
\end{equation}
supplemented with several divergence-free and boundary conditions.
Again, the above system is different from the original version of the MHD-$\alpha$ system proposed in~\cite{LiTi07}, which 
does not have a regularization on the magnetic field $h$.
For the original MHD-$\alpha$ system, global well-posedness is established in~\cite{LiTi07}.
Here we would like to stress that our current framework is best suited to MHD models where the velocity field $u$ and the magnetic field $h$
are treated on an equal footing as far as the regularization is concerned, so 
we cannot obtain sharpest results in our framework for the systems like the ones proposed in~\cite{LiTi07}.
However, our framework requires only minor modification to include these models; the function spaces become product spaces, and the principal dissipation and smoothing operators become block operators on these product spaces, typically with block diagonal form.
It is worthwhile to note here that filtering the magnetic field, as is done in~\cite{MMP05a,MMP05b}, can be interpreted as introducing hyperviscosity for the filtered magnetic field.
This observation was first introduced in~\cite{LiTi07}, and smoothing the magnetic field was thought to yield unnecessary extra dissipation added to the system.

Since the $\alpha$ models of turbulence were intended as a basis for regularizing numerical schemes for simulating turbulence, it is important to verify whether the ad~hoc smoothed equations inherit some of the original properties of the 3D MHD equations.
In particular, one would like to see if the ideal $(\nu=\eta=0)$ quadratic invariants of the smoothed MHD system can be identified with the ideal invariants of the original 3D MHD system under suitable boundary conditions.
There are three ideal quadratic invariants in 3D MHD (e.g.\ under rectangular periodic boundary conditions or in the whole space), namely, the energy $E = \frac{1}{2}\int_\Omega |u(x)|^2 + |h(x)|^2\;dx$, the cross helicity $h_c = \frac{1}{2}\int_\Omega u(x)\cdot h(x)\;dx$, and the magnetic helicity $h_M = \frac{1}{2}\int_\Omega a(x)\cdot h(x)\;dx$, where $a(x)$ is the vector potential so that $h(x) = \nabla\times a(x)$.
In the case of the 3D MHD-$\alpha$ equations in~\cite{LiTi07}, the three corresponding ideal invariants are the energy $E^\alpha~=~\frac{1}{2}\int_\Omega u(x)\cdot Mu(x)+|h(x)|^2\;dx$ (which reduce, as $\alpha\rightarrow 0$, to the conserved energy of the 3D MHD equations), the cross helicity $h^\alpha_c = \frac{1}{2}\int_\Omega u(x)\cdot h(x)\;dx$, and the magnetic helicity $h^\alpha_M = \frac{1}{2}\int_\Omega a(x)\cdot h(x)\;dx$.
For our system in \eqref{e:Leray-alpha-mhd} the corresponding ideal invariants are the energy $E^\alpha = \frac{1}{2}\int_\Omega |u(x)|^2 + |h(x)|^2\;dx$, and the  cross helicity $h_c^\alpha = \frac{1}{2}\int_\Omega u(x)\cdot h(x)\;dx$.
Currently, we are unable to identify a conserved quantity in the ideal version of \eqref{e:Leray-alpha-mhd} which correspond to the magnetic helicity in the 3D MHD system.
Similar problems arise in the 3D MHD-Leray-$\alpha$ equations introduced in~\cite{LiTi07}.
When both the magnetic field and the velocity field are filtered as it is done in~\cite{MMP05a}, the corresponding ideal quadratic invariants are $E^\alpha = \frac{1}{2}\int_\Omega u(x)\cdot Mu(x) + h(x)\cdot Mh(x)\;dx$, the cross helicity $h_c^\alpha = \frac{1}{2}\int_\Omega u(x)\cdot Mh(x)\;dx$, and the magnetic helicity $h^\alpha_M = \frac{1}{2}\int_\Omega Ma(x)\cdot Mh(x)\;dx$.

The remainder of the paper is structured as follows.
In~\S\ref{s:prelim}, we establish our notation and give some basic 
preliminary results for the operators appearing in the general 
regularized model.
In~\S\ref{s:well}, we build some well-posedness results for the general model; 
in particular, we establish existence results (\S\ref{ss:exist}), 
regularity results (\S\ref{ss:reg}),
and uniqueness and stability results (\S\ref{ss:stab}).
In~\S\ref{s:pert} we establish some results for singular perturbations,
which as special cases include the inviscid limit of viscous models and the 
$\alpha \rightarrow 0$ limit in $\alpha$ models;
this involves a separate analysis of the linear (\S\ref{ss:pert-lin}) and
nonlinear (\S\ref{ss:pert-nonlin}) terms.
These well-posedness and perturbation results are based on energy methods.
In~\S\ref{s:attr}, we show existence of a global attractor for the 
general model by dissipation arguments (\S\ref{ss:attr-exist}), 
and then by employing the classical approach from~\cite{Tema88}, give estimates for the dimension 
of the global attractor (\S\ref{ss:attr-dim}).
In~\S\ref{s:determining},
we establish some results on determining operators for the
two distinct subfamilies of dissipative (\S\ref{ss:dis})
and non-dissipative (\S\ref{ss:non-dis1}) models,
with the help of certain generalizations of the techniques used e.g., in~\cite{FMTT83,JoTi93,HoTi96b,KT07}.
Since the results in~\S\ref{s:determining} are (naturally) given in terms
of the generalized Grashof number, we finish in~\S\ref{s:Reynolds} by
developing some new results on length-scale estimates in terms of the
Reynolds number, which allows for relating the Grashof number-based results 
in the paper to analogous statements involving the Reynolds number.

To make the paper reasonably self-contained, 
in Appendix~\ref{s:app} we develop some supporting material on 
Gronwall-type inequalities (Appendix~\ref{ss:gronwall}),
spaces involving time (Appendix~\ref{ss:anis}),
and
Sobolev spaces (Appendix~\ref{ss:sobolev}).

\section{Notation and preliminary material}
\label{s:prelim}

Let $\Omega$ be an $n$-dimensional smooth compact manifold 
with or without boundary and equipped with a volume form,
and let $E\to\Omega$ be a vector bundle over $\Omega$ 
equipped with a Riemannian metric.
With $C^\infty(E)$ denoting the space of smooth sections of $E$,
let $\mathcal{V} \subseteq C^\infty(E)$ be a linear subspace,
let $A:\mathcal{V}\ra\mathcal{V}$ be a linear operator, 
and let $B:\mathcal{V}\times\mathcal{V}\ra\mathcal{V}$ be a bilinear map.
At this point $\mathcal{V}$ is conceived to be an arbitrary linear subspace of $C^\infty(E)$;
however, later on we will impose restrictions on $\mathcal{V}$ implicitly through various conditions on certain operators
such as $A$.
Assuming that the initial data $u_0\in\mathcal{V}$,
and forcing term $f\in C^\infty(0,T;\mathcal{V})$ with $T>0$,
consider the following equation
\begin{equation}\label{e:op}
\begin{split}
\pp_t u + Au + B(u,u) &= f,\\
u(0) &= u_0,
\end{split}
\end{equation}
on the time interval $[0,T]$.
Bearing in mind the model \pref{e:pde},
we are mainly interested in bilinear maps of the form 
\begin{equation}
   \label{e:b-def}
B(v,w)=\bar{B}(Mv,Nw),
\end{equation}
where $M$ and $N$ are linear operators in $\mathcal{V}$
that are in some sense regularizing and are relatively flexible,
and $\bar{B}$ is a bilinear map fixing the underlying nonlinear structure of the equation.
In the following, let $P:C^{\infty}(E)\ra\mathcal{V}$ be the $L^2$-orthogonal projector onto $\mathcal{V}$.

\begin{example}\label{x:spaces}
a) When $\Omega$ is a closed Riemannian manifold, and $E=T\Omega$ the tangent bundle,
an example of $\mathcal{V}$ is $\mathcal{V}_{\mathrm{per}}\subseteq \{u\in C^{\infty}(T\Omega):\mathrm{div}\,u=0\}$, 
a subspace of the divergence-free functions.
The space of periodic functions with vanishing mean on a torus $\mathbb{T}^n$
is a special case of this example.
In this case, one typically has $A = (- \Delta)^{\theta}$, $M = (I - \alpha^2 \Delta)^{-\theta_1}$, and $N = (I - \alpha^2 \Delta)^{-\theta_2}$.

b) When $\Omega$ is a compact Riemannian manifold with boundary, and again $E=T\Omega$ the tangent bundle,
a typical example of $\mathcal{V}$ is $\mathcal{V}_{\mathrm{hom}}=\{u\in C_0^{\infty}(T\Omega):\mathrm{div}\,u=0\}$ the space of compactly supported divergence-free functions.
In this case, we keep the operators 
$A = (- P\Delta)^{\theta}$, $M = (I - \alpha^2 P\Delta)^{-\theta_1}$, and $N = (I - \alpha^2 P\Delta)^{-\theta_2}$,
in mind as the operators that one would typically consider.

c) In either of the above two examples, one can consider as $\mathcal{V}$ the product spaces $\mathcal{V}_{\mathrm{per}}\times \mathcal{V}_{\mathrm{per}}$ and $\mathcal{V}_{\mathrm{hom}}\times \mathcal{V}_{\mathrm{hom}}$, which are encountered, e.g., in magnetohydrodynamics models, cf.
Example \ref{x:bilinear} below.
For the operators $A$, $M$, and $N$, we would have the corresponding block operators on the product space $\mathcal{V}$ for the above two examples, acting diagonally.
\end{example}

\begin{example}\label{x:bilinear}
a) In a) or b) of the Example~\ref{x:spaces} above, the bilinear map $\bar{B}$ can be taken to be 
\begin{equation}
\label{e:b1}
\bar{B}_1(v, w) = P [ (v\cdot\nabla) w ],
\end{equation}
which corresponds to the models with $\chi=0$ as discussed in~\S\ref{s:intro}.

b) Again in a) or b) of Example~\ref{x:spaces} above, $\bar{B}$ can be taken to be 
\begin{equation}
\label{e:b2}
\bar{B}_2(v, w) = P [ (w\cdot\nabla) v + (\nabla w^T)v ],
\end{equation}
which corresponds to the models with $\chi=1$ as discussed in~\S\ref{s:intro}.

c) An example of $B$ that cannot be written in the form~\pref{e:b-def} is the bilinear map for the Clark-$\alpha$ model,
which is 
\begin{equation*}
\label{e:bc}
B_c(v,w) = \bar{B}_1(Nv,w) + \bar{B}_1(w,Nv) - \bar{B}_1(Nv,Nw) - \alpha\bar{B}_1(\nabla^jNv,\nabla_jNw),
\end{equation*}
where $N=(I-\alpha^2 P\Delta)^{-1}$, and where the Einstein summation convention is assumed.
Note that for this bilinear map, one only has $\ang{B_c(v,v),Nv}=0$ for any $v\in\mathcal{V}_{\mathrm{per}}$
or $v\in\mathcal{V}_{\mathrm{hom}}$, in contrast to the examples $\bar{B}_1$ and $\bar{B}_2$
which are well-known to have stronger antisymmetry properties.

d) The MHD system \pref{e:mhd} has the bilinear map
\begin{equation}
\label{e:b3}
\bar{B}_3(v,w) 
= 
\left(\bar{B}_1(v_1,w_1) - \bar{B}_1(v_2,w_2) , \bar{B}_1(v_1,w_2) - \bar{B}_1(v_2,w_1) \right),
\end{equation}
where $v=(v_1,v_2)$ and $w=(w_1,w_2)$ are elements of either
$\mathcal{V}_{\mathrm{per}}\times \mathcal{V}_{\mathrm{per}}$ or $\mathcal{V}_{\mathrm{hom}}\times \mathcal{V}_{\mathrm{hom}}$.
The bilinear map for \pref{e:Leray-alpha-mhd} is a special case of $B_3(v,w)=\bar{B}_3(Mv,Nw)$,
with $M$ and $N$ having the form $M=\mathrm{diag}(M_1,M_1)$ and $N=I$.
Another special case of $B_3$ is the bilinear form for the Leray-$\alpha$-MHD model proposed in~\cite{LiTi07},
where $M$ and $N$ have the form  $M=\mathrm{diag}(M_1,I)$ and $N=I$.

e) The MHD-$\alpha$ model \pref{e:mhd-alpha} has the bilinear map of the form
\begin{equation}
\label{e:b4}
\bar{B}_4(v,w)= 
\left(\bar{B}_2(v_1,w_1) - \bar{B}_1(v_2,w_2) , \bar{B}_1(v_1,w_2) - \bar{B}_1(v_2,w_1)\right).
\end{equation}
The bilinear map for \pref{e:mhd-alpha} is a special case of $B_4(v,w)=\bar{B}_4(Mv,Nw)$,
with $M$ and $N$ having the form $M=\mathrm{diag}(M_1,M_1)$ and $N=I$.
Another special case of $B_4$ is the bilinear form for the MHD-$\alpha$ model proposed in~\cite{LiTi07},
where $M$ and $N$ have the form  $M=\mathrm{diag}(M_1,I)$ and $N=I$.

f) More generally, one can consider the bilinear maps of the form
\begin{equation*}
\label{e:b5}
\bar{B}_5(v,w)= \bar{B}_{i,j,k}(v,w) =
\left(\bar{B}_i(v_1,w_1) - \bar{B}_j(v_2,w_2) , \bar{B}_k(v_1,w_2) - \bar{B}_j(v_2,w_1)\right),
\end{equation*}
where $i,j,k\in\{1,2\}$.
This class includes the above examples d) and e).
\end{example}

To refer to the above examples later on,
let us introduce the shorthand notation:
\begin{equation}
\label{e:b-forms2}
B_i(v,w)=\bar{B}_i(Mv,Nw), \quad i=1,\ldots,5.
\end{equation}

As usual, we will study equation \pref{e:op} by extending it to function spaces that have weaker differentiability properties.
To this end, we interpret the equation \pref{e:op} in distribution sense,
and need to continuously extend $A$ and $B$ to appropriate smoothness spaces.
Namely, we employ the spaces $V^s = \mathrm{clos}_{H^s}\mathcal{V}$, which will informally be called Sobolev spaces in the following.
The pair of spaces $V^s$ and $V^{-s}$ are equipped with the duality pairing $\ang{\cdot,\cdot}$, that is, 
the continuous extension of the $L^2$-inner product on $V^0$.
Moreover, we assume that there is a self-adjoint positive operator $\Lambda$
such that $\Lambda^s:V^{s}\to V^{0}$ is an isometry for any $s\in\R$.
For arbitrary real $s$, assume that $A$, $M$, and $N$ can be continuously extended so that
\begin{equation}\label{e:bdd-amn}
A:V^s \ra V^{s-2\theta},\quad
M: V^s \ra V^{s+2\theta_1},\quad\textrm{and}\quad
N: V^s \ra V^{s+2\theta_2},
\end{equation}
are bounded operators.
(Again, the assumptions we will need for $A$, $M$, and $N$ are more general, and do not require this particular form of the parameterization.)
We will assume $\theta\geq0$; however, there will not be any {\em a priori} sign restriction on $\theta_1$ and $\theta_2$.
We remark that $s$ in \pref{e:bdd-amn} is assumed to be arbitrary for the purposes of the discussion in this section, and that it is of course sufficient to assume \pref{e:bdd-amn} for a limited range of $s$ for most of the results in this paper.

\begin{remark}
Note that in this framework the best value for $\theta_1$ is $\theta_1=0$ for both the Leray-$\alpha$-MHD model and the MHD-$\alpha$ model as proposed in~\cite{LiTi07},
since those models have $M=(M_1,I)$, cf.\ Example \ref{x:bilinear} d) and e).
It is possible to refine our analysis by considering spaces such as $V^{s}\times V^{r}$ instead of $V^{s}\times V^{s}$.
\end{remark}

We assume that $A$ and $N$ are both self-adjoint, and coercive in the sense that for $\beta\in\R$,
\begin{equation}\label{e:coercive-a}
\ang{Av,\Lambda^{2\beta} v} \geq c_A\|v\|_{\theta+\beta}^2 - C_A\|v\|_{\beta}^2,
\qquad
v\in V^{\theta+\beta},
\end{equation}
with $c_A=c_A(\beta)>0$, and $C_A=C_A(\beta)\geq0$, and that
\begin{equation}\label{e:coercive-n}
\ang{Nv,v} \geq c_N\|v\|_{-\theta_2}^2,
\qquad
v\in V^{-\theta_2},
\end{equation}
with $c_N>0$.
We also assume that \pref{e:coercive-a} is valid for $\beta=-\theta_2$ with $\Lambda^{2\beta}$ replaced by $N$.
Note that if $\theta=0$, \pref{e:coercive-a} is strictly speaking not coercivity and follows from the boundedness of $A$,
and note also that \pref{e:coercive-n} implies the invertibility of $N$.
For clarity, we list in Table \ref{t:spec2} the corresponding values of the parameters and bilinear maps discussed above for special cases listed in Table \ref{t:spec}.

{\small
\begin{table}[ht]
\caption{\footnotesize
Values of the parameters $\theta, \theta_1$ and $\theta_2$, and the particular form of the bilinear map $B$ for some special cases of the model \pref{e:op}.
(The bilinear maps $B_1$ and $B_2$ are as in~\pref{e:b1}--\pref{e:b2},~\pref{e:b-forms2}.)
Each of the NSE models has a corresponding MHD analogue.
}
\begin{center}
\begin{tabular}{|c|c|c|c|c|c|c|c|} 
\hline\hline
Model  &  NSE      & Leray-$\alpha$ & ML-$\alpha$ & SBM        & NSV               & NS-$\alpha$       & NS-$\alpha$-like \\ \hline
$\theta$ &1        &1               &1            &1           &0                  &1                  &$\theta$\\
$\theta_1$ &0 &1 &0 &1 &1 &0 &0\\
$\theta_2$ &0 &0 &1 &1 &1 &1 &$\theta_2$\\
$B$ &$B_1$ &$B_1$ &$B_1$ &$B_1$ &$B_1$ &$B_2$ &$B_2$\\
\hline\hline   
\end{tabular}
\end{center}
\label{t:spec2}
\end{table}
}

We denote the trilinear form $b(u,v,w) = \langle B(u,v),w \rangle$,
and similarly the forms $\bar{b}$, $b_i$, and $\bar{b}_i$, $i=1,\ldots,5$,
following Example \ref{x:bilinear} and \pref{e:b-forms2}.
We consider the following {\em weak formulation} of the equation \pref{e:op}:
Find $u\in L^1_{\mathrm{loc}}(0,T;V^s)$ for some $s$ such that
\begin{equation}\label{e:weak}
\begin{split}
\hspace*{-0.2cm}
\int_0^T 
\hspace*{-0.2cm}
\big(
\hspace*{-0.1cm}
-
\hspace*{-0.1cm}
\ang{u(t),\dot{w}(t)} + \ang{A u(t), w(t)} 
   + b(u(t), u(t), w(t)) - \ang{f(t),w(t)} \big) dt &= 0
\\
u(0) &= u_0,
\end{split}
\end{equation}
for any $w\in C^\infty_0(0,T;\mathcal{V})$.
Here the dot over a variable denotes the time derivative.

The left hand side of the first equation in \pref{e:weak} is well defined if $u\in L^2(0,T;V^s)$, 
$f\in L^1(0,T;V^{s'})$, and
$b:V^{s}\times V^{s}\times V^{\gamma}\ra\R$ is bounded for some $s,s',\gamma\in\R$.
The second equation makes sense if $u\in C(0,\eps;V^{\bar{s}})$ for some $\eps>0$ and $\bar{s}\in\R$.
However, the following lemma shows that the latter condition is implied by the first equation.

\begin{lemma}
Let $\mathcal{X}\subset L^2(0,T;V^s)$ be the set of functions that satisfy the first equation in \pref{e:weak}.
Let $f\in L^1(0,T;V^{s-2\theta})$, and
let $b:V^{s}\times V^{s}\times V^{2\theta-s}\ra\R$ be bounded.
Then we have $\mathcal{X}\subset C(0,T;V^{s-2\theta})$.
\end{lemma}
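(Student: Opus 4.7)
The plan is to read the weak equation as telling us that $\dot u$ (in the sense of $\mathcal{V}$-valued distributions on $(0,T)$) equals $-Au - B(u,u) + f$, show that each of these three terms lies in $L^1(0,T;V^{s-2\theta})$, and then appeal to the standard fact that a function in $L^2(0,T;V^s)$ whose distributional derivative lies in $L^1(0,T;V^{s-2\theta})$ admits a representative in $C([0,T];V^{s-2\theta})$.

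First I would rewrite the first equation in \pref{e:weak} with an arbitrary test field $w\in C_0^\infty(0,T;\mathcal{V})$ as
\begin{equation*}
\int_0^T \ang{u(t),\dot{w}(t)}\,dt = \int_0^T\bigl(\ang{Au(t),w(t)} + b(u(t),u(t),w(t)) - \ang{f(t),w(t)}\bigr)\,dt.
\end{equation*}
Because $\mathcal{V}$ is dense in $V^{2\theta-s}$ and all three terms on the right extend continuously to $w\in V^{2\theta-s}$, this identity says precisely that, in $\mathcal{D}'(0,T;V^{s-2\theta})$,
\begin{equation*}
\dot u = -Au - B(u,u) + f.
\end{equation*}

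Next I would bound each summand on the right in $L^1(0,T;V^{s-2\theta})$. By \pref{e:bdd-amn}, $A:V^s\to V^{s-2\theta}$ is bounded, so $Au\in L^2(0,T;V^{s-2\theta})\subset L^1(0,T;V^{s-2\theta})$. For the nonlinear term, the hypothesis that $b:V^s\times V^s\times V^{2\theta-s}\to\R$ is bounded means $\|B(v,v)\|_{V^{s-2\theta}}\leq C\|v\|_{V^s}^2$, so Cauchy-Schwarz in time gives $B(u,u)\in L^1(0,T;V^{s-2\theta})$ with norm controlled by $\|u\|_{L^2(0,T;V^s)}^2$. Finally $f$ is already in $L^1(0,T;V^{s-2\theta})$ by assumption. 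Hence $\dot u\in L^1(0,T;V^{s-2\theta})$.

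Now I would invoke the standard anisotropic embedding (the version recalled in Appendix~\ref{ss:anis}) stating that if $u\in L^2(0,T;V^s)$ with distributional derivative $\dot u\in L^1(0,T;V^{s-2\theta})$, and since the Hilbert triple $V^s\hookrightarrow V^{s-2\theta}$ is continuous, then $u$ agrees a.e.\ with a function in $C([0,T];V^{s-2\theta})$. This yields $\mathcal{X}\subset C(0,T;V^{s-2\theta})$ as claimed, and also makes the initial condition $u(0)=u_0$ in \pref{e:weak} meaningful in $V^{s-2\theta}$.

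The only subtlety worth flagging is the identification step: one must verify that the equality of scalar integrals against every $w\in C_0^\infty(0,T;\mathcal{V})$ really upgrades to an identity of $V^{s-2\theta}$-valued distributions, which rests on $\mathcal{V}$ being dense in $V^{2\theta-s}$ (immediate from $V^s=\mathrm{clos}_{H^s}\mathcal V$ together with the duality between $V^\sigma$ and $V^{-\sigma}$). Everything else is an application of the mapping properties of $A$ and of the trilinear bound on $b$, followed by a textbook time-regularity lemma.
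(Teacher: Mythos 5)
Your argument is correct and is essentially the paper's own proof, merely written out in full: the paper likewise observes that the equation forces $\dot u = -Au - B(u,u) + f \in L^1(0,T;V^{s-2\theta})$ and then cites the standard time-regularity lemma (\cite[Lemma~3.1.1]{Tema77}) to conclude continuity. The only tiny slip is attributing that final step to the Aubin--Lions-type compactness result in Appendix~\ref{ss:anis}; the relevant fact is the (simpler) lemma that $u\in L^1_{\mathrm{loc}}$ with $\dot u\in L^1(0,T;Z)$ has a continuous $Z$-valued representative, which is what the paper actually invokes.
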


\begin{proof}
If $u\in\mathcal{X}$, we have $\dot{u}\in L^1(0,T;V^{s-2\theta})$;
and so~\cite[Lemma 3.1.1]{Tema77} implies that $u\in C(0,T;V^{s-2\theta})$.
\end{proof}

In concluding the preliminary material in this section, we state the following result on the trilinear forms $\bar{b}_i$, $i=1,\ldots,5$, which are the main concrete examples for the trilinear form $\bar{b}$.
Recall that $b(u,v,w)=\bar{b}(Mu,Nv,w)$.

\begin{proposition}\label{p:bdd-b}
a) The trilinear forms $\bar{b}_i$, $i=1,\ldots,5$, are antisymmetric in their second and third variables:
\begin{equation}\label{e:b-anti}
\bar{b}_i(w,v,v) = 0,
\qquad
w,v\in\mathcal{V},
\quad
i=1,\ldots,5,
\end{equation}
where $\mathcal{V}$ is one of the appropriate spaces in Example \ref{x:spaces}.

b) The trilinear form $\bar{b}_1:V^{\sigma_1}\times V^{\sigma_2}\times V^{\sigma_3}\ra\R$ is bounded provided that
\begin{equation}\label{e:bdd-b}
\textstyle\sigma_1 + \sigma_2 + \sigma_3 > \frac{n+2}2,
\end{equation}
and that for some $k\in\{0,1\}$,
\begin{equation}\label{e:bdd-b1}
\sigma_2+\sigma_3\geq1,\quad
\sigma_1+\sigma_3\geq k,\quad
\sigma_1+\sigma_2\geq1-k.
\end{equation}
If the above three conditions are satisfied, and if $\sigma_i$ is a non-positive integer for some $i\in\{1,2,3\}$, then the inequality in \pref{e:bdd-b} can be replaced by the non-strict version of the inequality.
The non-strict inequality is also allowed if for some $k\in\{0,1\}$,
\begin{equation}\label{e:bdd-b1s}
\sigma_1\geq0,\quad
\sigma_2\geq k,\quad
\sigma_3\geq1-k.
\end{equation}

c) The trilinear form $\bar{b}_2:V^{\sigma_1}\times V^{\sigma_2}\times V^{\sigma_3}\ra\R$ is bounded provided that
\pref{e:bdd-b} holds
and in addition that
\begin{equation}\label{e:bdd-b2}
\sigma_2+\sigma_3\geq0,\quad
\sigma_1+\sigma_3\geq1,\quad
\sigma_1+\sigma_2\geq1.
\end{equation}
If the above three conditions are satisfied, and if $\sigma_i$ is a non-positive integer for some $i\in\{1,2,3\}$, then the inequality in \pref{e:bdd-b} can be replaced by the non-strict version of the inequality.
The non-strict inequality is also allowed if
\begin{equation}\label{e:bdd-b2s}
\sigma_1\geq1,\quad
\sigma_2\geq0,\quad
\sigma_3\geq0.
\end{equation}

d) The trilinear form $\bar{b}_3:V^{\sigma_1}\times V^{\sigma_2}\times V^{\sigma_3}\ra\R$ is bounded under the same conditions
on $\sigma_1$, $\sigma_2$, and $\sigma_3$ that are given in b).

e) The trilinear forms $\bar{b}_4, \bar{b}_5:V^{\sigma_1}\times V^{\sigma_2}\times V^{\sigma_3}\ra\R$ are bounded provided that
\pref{e:bdd-b} holds
and in addition that
\begin{equation}\label{e:bdd-b6}
\sigma_2+\sigma_3\geq1,\quad
\sigma_1+\sigma_3\geq1,\quad
\sigma_1+\sigma_2\geq1.
\end{equation}
If the above three conditions are satisfied, and if $\sigma_i$ is a non-positive integer for some $i\in\{1,2,3\}$, then the inequality in \pref{e:bdd-b} can be replaced by the non-strict version of the inequality.
The non-strict inequality is also allowed if for some $k\in\{0,1\}$,
\begin{equation}\label{e:bdd-b6s}
\sigma_1\geq1,\quad
\sigma_2\geq k,\quad
\sigma_3\geq1-k.
\end{equation}
\end{proposition}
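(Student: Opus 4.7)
The proposition collects the classical trilinear estimates for the Navier--Stokes-type nonlinearity and its MHD block extensions, adapted to the Sobolev scale $V^s$ on the manifold $\Omega$. I would separate the antisymmetry (a) from the boundedness parts (b)--(e), and reduce the boundedness parts to a single H\"older--Sobolev scheme applied to $\bar b_1$.

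For (a), I would first check the antisymmetry of $\bar b_1$ via
\[
\bar b_1(w,v,v) = \tfrac12\int_\Omega w\cdot\nabla|v|^2 = -\tfrac12\int_\Omega(\nabla\cdot w)|v|^2 = 0,
\]
using that $w\in\mathcal{V}$ is divergence-free. For $\bar b_2$, the transpose identity $(\nabla v)^Tw\cdot v = w\cdot(v\cdot\nabla)v$ reduces the vanishing of $\bar b_2(w,v,v)$ to $\int v\cdot\nabla(w\cdot v) = 0$. The block-structured forms $\bar b_3, \bar b_4, \bar b_5$ then follow from the scalar cases: the diagonal summands vanish by the above, and the two off-diagonal summands combine into an expression of the form $\bar b_j(w_2,v_1,v_2) + \bar b_j(w_2,v_2,v_1)$, which is zero by the same divergence-free identity applied to $w_2$ together with $v_1$ and $v_2$.

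For the boundedness in (b), I would use H\"older's inequality
\[
|\bar b_1(u,v,w)| \leqslant \|u\|_{L^{p_1}}\|\nabla v\|_{L^{p_2}}\|w\|_{L^{p_3}},\qquad \tfrac{1}{p_1}+\tfrac{1}{p_2}+\tfrac{1}{p_3}=1,
\]
followed by the Sobolev embedding $H^\sigma\hookrightarrow L^p$ valid whenever $\sigma\geqslant \tfrac{n}2 - \tfrac{n}p$, with strict inequality at the $L^\infty$ endpoint $\sigma=\tfrac{n}2$. Summing the three embedding inequalities for $u$, $\nabla v$, $w$ with exponents $\sigma_1$, $\sigma_2-1$, $\sigma_3$ produces precisely the threshold $\sigma_1+\sigma_2+\sigma_3>\tfrac{n+2}{2}$, while the pairwise conditions \pref{e:bdd-b1} amount to the requirement that each $p_i\in[1,\infty]$. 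The two cases $k\in\{0,1\}$ arise by first using the antisymmetry of (a) to swap $v$ and $w$ when advantageous. Negative Sobolev indices are handled by dualizing via $V^{-s}=(V^s)^*$ together with the Sobolev multiplication estimates collected in Appendix~\ref{ss:sobolev}. Part (d) then reduces componentwise to four estimates of type (b), since $\bar b_3$ is built entirely from $\bar b_1$.

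Parts (c) and (e) follow from (b) via a short algebraic identity. Integration by parts on divergence-free fields, together with the transpose relation $(\nabla v)^Tu\cdot w = u\cdot(w\cdot\nabla)v$, yields $\bar b_2(u,v,w) = \bar b_1(v,u,w) + \bar b_1(w,v,u)$, so (c) follows from two independent applications of (b): choosing $k=0$ for the first summand and $k=1$ for the second produces exactly \pref{e:bdd-b2}. Part (e) then reduces componentwise to bounds of types (b) and (c); the symmetric condition \pref{e:bdd-b6} appears because both $\bar b_1$- and $\bar b_2$-type pieces are present, so no slot can be singled out as a preferred low-regularity slot. The main technical obstacle throughout is the non-strict endpoint analysis: the Sobolev embedding into $L^\infty$ fails at $\sigma=n/2$, so showing that the non-strict inequality $\sigma_1+\sigma_2+\sigma_3\geqslant\tfrac{n+2}{2}$ suffices exactly under each of the supplementary conditions \pref{e:bdd-b1s}, \pref{e:bdd-b2s}, \pref{e:bdd-b6s}, or when some $\sigma_i$ is a non-positive integer, amounts to exhibiting in each configuration a H\"older triple $(p_1,p_2,p_3)$ that avoids the critical embedding. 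This is a finite but delicate case analysis, and it is where the precise form of the statement is used.
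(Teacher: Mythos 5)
Your proposal is correct and follows essentially the same route as the paper: antisymmetry via integration by parts against divergence-free fields, with the block forms $\bar b_3,\bar b_4,\bar b_5$ reduced to the scalar cases by polarization of the off-diagonal terms, and boundedness via the Sobolev multiplication estimate of Lemma~\ref{l:sob-hol} (your H\"older--Sobolev computation is just that lemma's proof in the nonnegative-index regime, and you correctly defer to the lemma itself for negative indices); the two cases $k\in\{0,1\}$ in \pref{e:bdd-b1} come from the two integrated-by-parts representations of $\bar b_1$, which is equivalent to your ``swap $v$ and $w$ by antisymmetry'' device. The only genuine (though minor) divergence is in part (c): you split $\bar b_2(u,v,w)=\bar b_1(v,u,w)+\bar b_1(w,v,u)$ and apply part (b) twice with $k=0$ and $k=1$, whereas the paper integrates by parts once more to collapse both terms into the single representation $\ang{v^i\nabla_i u^k - v_i\nabla^k u_i,w_k}$ (all derivatives falling on $u$) and applies the multiplication lemma once; both yield exactly \pref{e:bdd-b2} together with the supplementary non-strict case \pref{e:bdd-b2s}, so nothing is lost either way.
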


\begin{proof}
The antisymmetricity of $\bar{b}_1$, and $\bar{b}_2$ is well known, and 
the boundedness of $B_1$ is immediate from Lemma~\ref{l:sob-hol}.
The antisymmetricity of  $\bar{b}_5$ (which {\em a fortiori} implies that of $\bar{b}_3$ and $\bar{b}_4$) can be seen from 
\begin{equation*}
\bar{b}_5(w,v,v) 
= 
\bar{b}_i(w_1,v_1,v_1) - \bar{b}_j(w_2,v_2,v_1) + \bar{b}_k(w_1,v_2,v_2) - \bar{b}_j(w_2,v_1,v_2),
\end{equation*}
where $i,j,k\in\{1,2\}$.
Part b) is proven by applying Lemma~\ref{l:sob-hol} to each of the following two representations
\begin{equation*}
\bar{b}_1(u,v,w)
= 
\ang{u^i\nabla_i v^k,w_k}
=
\ang{u^i v^k, \nabla_iw_k}.
\end{equation*}
Part c) is proven by applying Lemma~\ref{l:sob-hol} to
\begin{equation*}
\begin{split}
\bar{b}_2(u,v,w)
&= 
\ang{v^i \nabla_i u^k + u^i \nabla^k v_i , w_k}
=
\ang{v^i \nabla_i u^k + \nabla^k (u^i v_i) - v_i \nabla^k u_i , w_k}\\
&=
\ang{v^i \nabla_i u^k - v_i \nabla^k u_i , w_k}.
\end{split}
\end{equation*}
To complete the proof, d) and e) follow from parts b) and c).
\end{proof}

\section{Well-posedness results}
\label{s:well}

Similar to the Leray theory of NSE, we begin the development of a solution theory for the general 3-parameter family of regularized Navier-Stokes and MHD models with clear energy estimates that will be used to establish existence and regularity results, and under appropriate assumptions show uniqueness and stability.
To reinforce which existing results we recover in this general unified analysis, we give the corresponding simplified results that have been established previously in the literature for the special cases listed in Table \ref{t:spec}, at the end of the proof of every theorem.

\subsection{Existence}
\label{ss:exist}

In this subsection, we establish sufficient conditions for the existence of weak solutions to the problem \pref{e:weak}.
By a weak solution, we mean a solution satisfying
$u\in L^2(0,T;V^{\theta-\theta_2})$ and $\dot{u}\in L^1(0,T;V^{\gamma})$ for some $\gamma\in\R$.

\begin{theorem}\label{t:exist}
a) Let the following conditions hold.
\begin{itemize}
\item[i)] 
$b:V^{\sigma_1}\times V^{\sigma_2}\times V^{\gamma}\ra\R$ is bounded for some
$\sigma_i\in[-\theta_2,\theta-\theta_2]$, $i=1,2$, and $\gamma\in[\theta+\theta_2,\infty)\cap(\theta_2,\infty)$;
\item[ii)] 
$b(v,v,Nv)=0$ for any $v\in V^{\theta-\theta_2}$;
\item[iii)] 
$b:V^{\bar\sigma_1}\times V^{\bar\sigma_2}\times V^{\bar\gamma}\ra\R$ is bounded for some 
$\bar\sigma_i<\theta-\theta_2$, $i=1,2$, and $\bar\gamma\geq\gamma$;
\item[iv)] $u_0\in V^{-\theta_2}$, and $f\in L^2(0,T;V^{-\theta-\theta_2})$, $T>0$.
\end{itemize}
Then, there exists a solution $u\in L^\infty (0,T;V^{-\theta_2}) \cap L^2(0,T;V^{\theta-\theta_2})$ to \pref{e:weak}
satisfying
\begin{equation*}
\dot{u}\in L^p (0,T;V^{-\gamma}),
\quad
p=
\begin{cases}
\min\{2, \frac{2\theta}{\sigma_1 + \sigma_2 + 2\theta_2}\},\textrm{ if }\theta>0\\
2,\textrm{ if }\theta=0.
\end{cases}
\end{equation*}

b) With some $\beta\geq-\theta_2$, let the following conditions hold.
\begin{itemize}
\item[i)] 
$b:V^{\beta}\times V^{\beta}\times V^{\theta-\beta}\ra\R$ is bounded;
\item[ii)] 
$u_0\in V^{\beta}$, and $f\in L^2(0,T;V^{-\theta+\beta})$, $T>0$.
\item[iii)] 
$b:V^{\sigma}\times V^{\sigma}\times V^{\gamma}\ra\R$ is bounded for some
$\sigma<\theta+\beta$, and $\gamma\geq\theta-\beta$;
\end{itemize}
Then, there exist $T^*(u_0)=T^*(\|u_0\|_{\beta})>0$ and a local solution $u\in L^\infty (0,T^*;V^{\beta}) \cap L^2(0,T^*;V^{\theta+\beta})$ to the equation \pref{e:weak}.
\end{theorem}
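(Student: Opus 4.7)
The proof will follow the classical Galerkin + energy estimate + compactness framework of Leray and Hopf, adapted to accommodate the smoothing operator $N$ in the duality pairing. The key structural observation is that the \emph{a priori} estimate exploits condition ii), $b(v,v,Nv)=0$, so one must test against $Nu$ rather than $u$; the modified coercivity of $A$ listed after \pref{e:coercive-n} (valid for $\beta=-\theta_2$ with $\Lambda^{2\beta}$ replaced by $N$) is then precisely what allows the diffusive term to dominate.

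\medskip
\noindent\textbf{Step 1 (Galerkin scheme).} Let $\{\phi_k\}$ be an orthonormal basis of $V^0$ consisting of eigenfunctions of $\Lambda$, with finite-dimensional projector $P_m$ onto $\mathrm{span}\{\phi_1,\ldots,\phi_m\}$. I solve the Lipschitz ODE
\begin{equation*}
\dot{u}_m + P_m A u_m + P_m B(u_m,u_m) = P_m f,\qquad u_m(0)=P_m u_0,
\end{equation*}
which yields a local-in-time smooth solution $u_m$ in the finite-dimensional space.

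\medskip
\noindent\textbf{Step 2 (Energy estimate for part a).} I pair the ODE with $Nu_m$. Using the self-adjointness of $N$, condition ii), the $N$-replaced version of \pref{e:coercive-a}, and \pref{e:coercive-n}, I obtain
\begin{equation*}
\tfrac{1}{2}\tfrac{d}{dt}\langle Nu_m,u_m\rangle + c_A\|u_m\|_{\theta-\theta_2}^2
\leq C_A\|u_m\|_{-\theta_2}^2 + \langle f, Nu_m\rangle,
\end{equation*}
and after a Cauchy-Schwarz/Young split of the forcing term (bounding $\langle f,Nu_m\rangle$ by $\|f\|_{-\theta-\theta_2}\|u_m\|_{\theta-\theta_2}$ via the boundedness of $N$), Gronwall gives uniform bounds
\begin{equation*}
u_m\in L^\infty(0,T;V^{-\theta_2})\cap L^2(0,T;V^{\theta-\theta_2}).
\end{equation*}

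\medskip
\noindent\textbf{Step 3 (Time derivative).} Reading $\dot u_m = P_m f - P_m A u_m - P_m B(u_m,u_m)$ in the $V^{-\gamma}$-topology: the linear terms are bounded in $L^2(0,T;V^{-\gamma})$ by the mapping properties \pref{e:bdd-amn} and the $f$-hypothesis. For the nonlinear term I use condition i) and interpolate $\|u_m\|_{\sigma_i}$ between $V^{-\theta_2}$ and $V^{\theta-\theta_2}$; a direct computation of the interpolation exponents shows $B(u_m,u_m)\in L^p(0,T;V^{-\gamma})$ with the stated $p$. (When $\theta=0$ the operator $B$ is simply bounded and $p=2$ is immediate.)

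\medskip
\noindent\textbf{Step 4 (Compactness and passage to the limit).} By Aubin--Lions applied to $V^{\theta-\theta_2}\hookrightarrow V^{s}\hookrightarrow V^{-\gamma}$ with any $s<\theta-\theta_2$ (the first embedding being compact), a subsequence $u_m\to u$ strongly in $L^2(0,T;V^{s})$ and weakly-$*$ in $L^\infty(0,T;V^{-\theta_2})$. Here condition iii) becomes essential: with $\bar\sigma_i<\theta-\theta_2$, the strong convergence can be chosen to take place in $V^{\bar\sigma_1}\cap V^{\bar\sigma_2}$, and the bounded trilinear form on $V^{\bar\sigma_1}\times V^{\bar\sigma_2}\times V^{\bar\gamma}$ lets me pass to the limit in $\int_0^T b(u_m,u_m,w)\,dt$ for every $w\in C^\infty_0(0,T;\mathcal{V})$.

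\medskip
\noindent\textbf{Step 5 (Part b).} Here I instead pair the Galerkin equation with $\Lambda^{2\beta}u_m$. The energy identity, using \pref{e:coercive-a} with $\beta$ and the hypothesis i), gives
\begin{equation*}
\tfrac{1}{2}\tfrac{d}{dt}\|u_m\|_\beta^2 + c_A\|u_m\|_{\theta+\beta}^2
\leq C_A\|u_m\|_\beta^2 + C\|u_m\|_\beta^2\|u_m\|_{\theta+\beta} + \langle f,\Lambda^{2\beta}u_m\rangle.
\end{equation*}
Young's inequality absorbs the $\|u_m\|_{\theta+\beta}$ factors into the dissipation and produces a Riccati-type bound $\frac{d}{dt}y \leq c(1+y^2)$ with $y=\|u_m\|_\beta^2$, from which I extract a life-span $T^*(\|u_0\|_\beta)>0$. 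The compactness and limit-passage arguments are then carried out as in part a), now invoking iii) to control the nonlinearity in $V^{-\gamma}$.

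\medskip
\noindent\textbf{Main obstacle.} The delicate step is Step 4: even though uniform bounds are available on a relatively rough scale, condition iii) is crucial to have $b$ defined and continuous on a space in which strong convergence of $u_m$ is actually obtained. Verifying that the interpolation in Step 3 yields an exponent $p>1$ in all parameter regimes covered by the hypotheses (so that the Aubin--Lions hypothesis is genuinely met) is the only calculation requiring care; everywhere else, the abstract coercivity assumptions \pref{e:coercive-a}--\pref{e:coercive-n} together with the antisymmetry assumption ii) effectively mimic the classical NSE estimates.
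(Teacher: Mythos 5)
Your proposal follows essentially the same route as the paper's proof: a Galerkin scheme tested against $Nu_m$ to exploit the antisymmetry ii) and the $N$-modified coercivity of $A$, an interpolation bound on $\dot u_m$ in $L^p(V^{-\gamma})$ with the same exponents $\lambda_i=(\sigma_i+\theta_2)/\theta$, Aubin--Lions compactness, and condition iii) to split and pass to the limit in the trilinear term; part b) is likewise handled by pairing with $\Lambda^{2\beta}u_m$ and a Riccati-type differential inequality yielding a finite life-span. The one place where the paper is more careful than your Step 1 is the Galerkin construction itself: since $N$ need not commute with the spectral projectors of $\Lambda$ in the abstract framework, the paper uses a Petrov--Galerkin scheme with trial spaces $V_m$ and test spaces $W_m=NV_m$ (so that $Nu_m$ is an admissible test function and the projector drops out of $\langle P_mAu_m,Nu_m\rangle$ by construction), whereas your $V^0$-orthogonal projector onto eigenfunctions of $\Lambda$ implicitly requires $P_mNu_m=Nu_m$, which holds for the concrete models but not in the general abstract setting.
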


\begin{remark}
a) Let $\theta+\theta_1>\frac12$.
Then from Proposition \ref{p:bdd-b} the trilinear forms $b_1$ and $b_3$ fulfill the hypotheses of a) for $-\gamma\leq\theta-\theta_2-1$ with 
$-\gamma<\min\{2\theta+2\theta_1-\frac{n+2}2,\theta-\theta_2+2\theta_1,\theta+\theta_2-1\}$.
Note that in particular this gives the global existence of a weak solution for the inviscid Leray-$\alpha$ model.
As far as we know this result has not been reported previously.

b) Let $\theta+2\theta_1\geq k$, $\theta+2\theta_2\geq1$, $\beta > \frac{n+2}{2}-\theta-2(\theta_1+\theta_2)$, and $\beta\geq \frac{1-k}{2} -\theta_1-\theta_2$, for some $k\in\{0,1\}$.
Then the trilinear forms $b_1$ and $b_3$ satisfy the hypothesis of b).
\end{remark}

\begin{remark}
a) Let $\theta+\theta_1>\frac12$.
Then the trilinear form $b_2$ fulfills the hypotheses of a) for $-\gamma\leq\theta-\theta_2-1$ with 
$-\gamma<\min\{2\theta+2\theta_1-\frac{n+2}2,\theta-\theta_2+2\theta_1-1,\theta+\theta_2\}$.

b) The trilinear form $b_2$ satisfies the hypothesis of b) for $\beta > \frac{n+2}{2}-\theta-2(\theta_1+\theta_2)$ with $\beta\geq \frac{1}{2} -\theta_1-\theta_2$ provided
$\theta+2\theta_1\geq1$ and $\theta+2\theta_2\geq0$.
\end{remark}

\begin{remark}
a) Let $\theta+\theta_1>\frac12$.
Then the trilinear forms $b_4$ and $b_5$ fulfill the hypotheses of a) for $-\gamma\leq\theta-\theta_2-1$ with 
$-\gamma<\min\{2\theta+2\theta_1-\frac{n+2}2,\theta-\theta_2+2\theta_1-1,\theta+\theta_2-1\}$.

b) Let $\theta+2\theta_1\geq 1$, $\theta+2\theta_2\geq1$, $\beta > \frac{n+2}{2}-\theta-2(\theta_1+\theta_2)$, and $\beta\geq \frac{1}{2} -\theta_1-\theta_2$.
Then the trilinear forms $b_1$ and $b_3$ satisfy the hypothesis of b).
\end{remark}

\begin{proof}[Proof of Theorem \ref{t:exist}]
Let $\{V_m:m\in\N\}\subset V^{\theta-\theta_2}$ be a sequence of finite dimensional subspaces of $V^{\theta-\theta_2}$ such that
\begin{enumerate}
\item
$V_m\subset V_{m+1}$ for all $m\in\N$;
\item
$\cup_{m\in\N}V_m$ is dense in $V^{\theta-\theta_2}$;
\item
For $m\in\N$, with $W_m=NV_m\subset V^{\theta+\theta_2}$,
the projector $P_m:V^{\theta-\theta_2}\ra V_m$ defined by
\begin{equation*}
\ang{P_mv,w_m}=\ang{v,w_m},
\qquad w_m\in W_m,\,v\in V,
\end{equation*}
is uniformly bounded as a map $V^{-\gamma}\ra V^{-\gamma}$.
\end{enumerate}
Such a sequence can be constructed e.g., by using the eigenfunctions of the isometry $\Lambda^{1+\theta}:V^{1+\theta-\theta_2}\to V^{-\theta_2}$.

Consider the problem of finding $u_m\in C^1(0,T;V_m)$ such that for all $w_m\in W_m$
\begin{equation}\label{e:galerkin}
\begin{split}
\langle\dot{u}_m , w_m \rangle + \langle A u_m , w_m \rangle + b(u_m, u_m, w_m) &= \langle f , w_m \rangle,\\
\langle u_m(0) , w_m \rangle &= \langle u_0 , w_m \rangle.
\end{split}
\end{equation}
Upon choosing a basis for $V_m$, the above becomes an initial value problem for a system of ODE's,
and moreover since $N$ is invertible by \pref{e:coercive-n}, the standard ODE theory gives a unique local-in-time solution.
Furthermore, this solution is global if its norm is finite at any finite time instance.

The second equality in \pref{e:galerkin} gives 
\begin{equation*}
c_N\|u_m(0)\|_{-\theta_2}^2
\leq \langle u_m(0) , N u_m(0) \rangle 
= \langle u(0) , N u_m(0) \rangle
\leq \|u(0)\|_{-\theta_2} \|Nu_m(0)\|_{\theta_2},
\end{equation*}
so that 
\begin{equation}
\|u_m(0)\|_{-\theta_2}\leq \frac{\|N\|_{-\theta_2;\theta_2}}{c_N}\|u(0)\|_{-\theta_2}.
\end{equation}
Now in the first equality of \pref{e:galerkin}, taking $w_m=Nu_m$, and using the condition ii) on $b$, we get
\begin{equation}\label{e:exist-1}
\begin{split}
\frac{d}{dt}\langle u_m, N u_m \rangle + 2 \langle Au_m, N u_m\rangle 
&= 2 \langle f_m, N u_m \rangle \\
&\leq \eps^{-1} \|f\|_{-\theta-\theta_2}^2 + \eps \|N\|_{-\theta_2;\theta_2}^2 \|u_m\|_{\theta-\theta_2}^2,
\end{split}
\end{equation}
for any $\eps>0$.
Since by choosing $\eps>0$ small enough we can ensure
\begin{equation*}
-2 \langle Au_m, N u_m\rangle + \eps \|N\|_{-\theta_2;\theta_2}^2 \|u_m\|_{\theta-\theta_2}^2
\lesssim
\|u_m\|_{-\theta_2}^2,
\end{equation*}
by Gr\"onwall's inequality we have
\begin{equation}\label{e:exist-2}
\|u_m (t)\|_{-\theta_2}^2
\lesssim
\left( \|u_m (0)\|_{-\theta_2}^2 + \int_{0}^t\|f\|_{-\theta-\theta_2}^2\right)
e^{C t},
\end{equation}
for some $C\in\R$.
For any fixed $T>0$, this gives  $u_m \in L^\infty (0,T;V^{-\theta_2})$ with uniformly (in $m$) bounded norm.
Moreover, integrating \pref{e:exist-1}, and taking into account \pref{e:exist-2}, we infer
\begin{equation}\label{e:exist-3}
\int_0^t \langle A u_m, N u_m \rangle \leq \psi(t),\qquad t\in[0,\infty),
\end{equation}
where $\psi:[0,\infty)\ra(0,\infty)$ is a continuous function.
If $\theta>0$, by the coerciveness of $A$, the above bound implies $u_m\in L^2(0,T;V^{\theta-\theta_2})$ with uniformly bounded norm.
So in any case, $u_m$ is uniformly bounded in $L^\infty (0,T;V^{-\theta_2})\cap L^2(0,T;V^{\theta-\theta_2})$,
and passing to a subsequence, there exists $u\in L^\infty (0,T;V^{-\theta_2})\cap L^2(0,T;V^{\theta-\theta_2})$ such that
\begin{equation}\label{e:exist-weak-conv}
\begin{split}
&u_m \rightarrow u \mbox{ weak-star in } L^\infty(0,T;V^{-\theta_2}),\\
&u_m \rightarrow u \mbox{ weakly in } L^2(0,T;V^{\theta-\theta_2}).
\end{split}
\end{equation}

For passing to the limit $m\ra\infty$ in \pref{e:galerkin} we shall need a strong convergence result,
which is obtained by a compactness argument.
We proceed by deriving a bound on $\dot{u}_m\equiv\frac{du_m}{dt}$.
Note that \pref{e:galerkin} can be written as
\begin{equation}\label{e:galerkin-abs}
\begin{split}
\dot{u}_m + P_m A u_m + P_m B(u_m, u_m) &= P_m f,\\
u_m(0) &= P_m u(0).
\end{split}
\end{equation}
Therefore
\begin{equation}
\| \dot{u}_m \|_{-\gamma} \leq C \| u_m \|_{\theta-\theta_2} + \| P_m B ( u_m , u_m ) \|_{-\gamma} + \| P_m f \|_{-\gamma} =: I_1 + I_2 + I_3.
\end{equation}
By the boundedness of $B$, we have
\begin{equation}
I_2 \lesssim \|u_m\|_{\sigma_1}\|u_m\|_{\sigma_2}.
\end{equation}
If $\theta=0$, then the norms in the right hand side are the $V^{-\theta_2}$-norm which is uniformly bounded.
If $\theta>0$, since
\begin{equation}
\|u_m\|_{\sigma_i} \leq \|u_m\|_{-\theta_2}^{1-\lambda_i}\|u_m\|_{\theta-\theta_2}^{\lambda_i},
\qquad
\lambda_i = \frac{\sigma_i + \theta_2}{\theta},
\quad
i=1,2,
\end{equation}
by the uniform boundedness of $u_m$ in $L^{\infty}(V^{-\theta_2})$ we have
\begin{equation}
I_2 \lesssim 
\|u_m\|_{-\theta_2}^{2-\lambda_1-\lambda_2}\|u_m\|_{\theta-\theta_2}^{\lambda_1+\lambda_2} 
\lesssim
\|u_m\|^{\lambda_1+\lambda_2}_{\theta-\theta_2}.
\end{equation}
Hence, with $\displaystyle\lambda := \lambda_1 + \lambda_2 = \frac{\sigma_1 + \sigma_2 + 2\theta_2}{\theta}$ if $\theta>0$,
and with $\lambda=1$ if $\theta=0$,
we get
\begin{equation}
\|\dot{u}_m\|_{L^p(V^{-\gamma})}^p 
\lesssim
\|u_m\|^p_{L^p(V^{\theta-\theta_2})} + \|u_m\|^p_{L^{p\lambda}(V^{\theta-\theta_2})} + \|f\|^p_{L^p(V^{-\theta-\theta_2})}.
\end{equation}
The first term on the right-hand side is bounded uniformly when $p\leq 2$.
The second term is bounded if $p\lambda\leq 2$,  that is $p\leq 2/\lambda$.
We conclude that $\dot{u}_m$ is uniformly bounded in $L^p\paren{0,T;V^{-\theta-\theta_2}}$, with $p=\min\{2,2/\lambda\}$.

By employing Theorem \ref{t:aubin},
we can now improve \pref{e:exist-weak-conv} as follows.
There exists $u\in C(0,T;V^{-\theta-\theta_2}) \cap L^\infty (0,T;V^{-\theta_2}) \cap L^2(0,T;V^{\theta-\theta_2})$ such that
\begin{equation}\label{e:exist-conv-u}
\begin{split}
&u_m \rightarrow u \mbox{ weak-star in } L^\infty(0,T;V^{-\theta_2}),\\
&u_m \rightarrow u \mbox{ weakly in } L^2(0,T;V^{\theta-\theta_2}),\\
&u_{m} \rightarrow u \mbox{ strongly in } L^2(0,T;V^{s}) \mbox{ for any }s<\theta-\theta_2.
\end{split}
\end{equation}
Now we will show that this limit $u$ indeed satisfies the equation \pref{e:weak}.
To this end,
let $w \in C^{\infty}(0,T;\mathcal{V})$ be an arbitrary function with $w(T)=0$, and let $w_m\in C^1(0,T;W_m)$ be such that $w_m(T)=0$ and $w_m\ra w$ in $C^1(0,T;V^{-\bar\gamma})$.
We have
\begin{multline}
- \int_0^T\langle u_m(t) , \dot{w}_m(t) \rangle dt 
+ \int_0^T \langle A u_m(t) , w_m(t) \rangle dt 
+ \int_0^T b( u_m(t) , u_m(t) , w_m(t) ) dt
\\
= \langle u_m(0) , w_m(0) \rangle
+  \int_0^T \langle f(t) , w_m(t) \rangle dt.
\end{multline}
We would like to show that each term in the above equation converges to the corresponding term in
\begin{multline}
- \int_0^T\langle u(t) , \dot{w}(t) \rangle dt 
+ \int_0^T \langle A u(t) , w(t) \rangle dt 
+ \int_0^T b( u(t) , u(t) , w(t) ) dt \\
= \langle u_0 , w(0) \rangle
+  \int_0^T \langle f(t) , w(t) \rangle dt,
\end{multline}
which would imply that $u$ satisfies \pref{e:weak}.
Here we show this only for the nonlinear term.
We have
\begin{equation}\label{e:b-conv}
\int_{0}^{T} \left\vert b( u_m(t) , u_m(t), w_m(t) ) - b( u (t) , u(t) , w(t) ) \right\vert dt
\leq \I_m + \II_m + \III_m,
\end{equation}
where the terms $\I_m$, $\II_m$, and $\III_m$ are defined below.
Firstly, it holds that
\begin{equation}
\begin{split}
\I_m 
&= \int_{0}^T | b( u_m(t) , u_m(t) , w_m(t)-w(t) ) | dt\\
&\lesssim
\int_{0}^T \|u_m(t)\|_{\bar\sigma_1} \|u_m(t)\|_{\bar\sigma_2} \| w_m(t) - w(t) \|_{\bar\gamma} dt\\
&\leq
\|u_m\|_{L^2(V^{\bar\sigma_1})} \|u_m\|_{L^2(V^{\bar\sigma_2})}  \|w_m-w\|_{C(V^{\bar\gamma})}.
\end{split}
\end{equation}
thus, we get $\lim_{m\rightarrow\infty}\I_m=0$ since {\em a fortiori} $\bar\sigma_i\leq\theta-\theta_2$, $i=1,2$.
For $\II_m$ we have
\begin{equation}
\begin{split}
\II_m 
&= 
\int_{0}^T | b( u_m(t)-u(t) , u_m(t) , w(t) ) | dt\\
&\lesssim 
\int_{0}^T \|u_m(t)-u(t)\|_{\sigma_1} \|u_m(t)\|_{\sigma_2} \|w(t)\|_{\theta+\theta_2} dt\\
&\leq 
\|u_m-u\|_{L^2(V^{\bar\sigma_1})} \|u_m\|_{L^2(V^{\bar\sigma_2})}  \|w\|_{C(V^{\bar\gamma})},
\end{split}
\end{equation}
so $\lim_{m\rightarrow\infty}\II_m=0$ since $\bar\sigma_1<\theta-\theta_2$ and $\bar\sigma_2\leq\theta-\theta_2$.
Similarly, we have
\begin{equation}
\begin{split}
\III_m 
&= 
\int_{0}^T | b( u(t) , u_m(t)-u(t) , w(t) ) | dt\\
&\lesssim 
\|u\|_{L^2(V^{\bar\sigma_1})} \|u_m-u\|_{L^2(V^{\bar\sigma_2})}  \|w\|_{C(V^{\bar\gamma})},
\end{split}
\end{equation}
so $\lim_{m\rightarrow\infty}\III_m=0$ since $\bar\sigma_1\leq\theta-\theta_2$ and $\bar\sigma_2<\theta-\theta_2$.

For the proof of b),
we choose the nested subspaces $\{V_m:m\in\N\}\subset V^{\theta+\beta}$ and $W_m=\Lambda^{2\beta}V_m\subset V^{\theta-\beta}$,
in \pref{e:galerkin}.
Then substituting $w_m=\Lambda^{2\beta}u_m(0)$ in the second equality in \pref{e:galerkin} gives
\begin{equation*}
\|u_m(0)\|_{\beta}
\lesssim 
\|u_0\|_{\beta}.
\end{equation*}
Now in the first equality of \pref{e:galerkin}, taking $w_m=\Lambda^{2\beta}u_m$, and using the boundedness of $b$, we get
\begin{equation*}
\frac{d}{dt}\|u_m\|_\beta^2 \lesssim \|f\|_{\beta-\theta}^2 + \|u_m\|_\beta^2 + \|u_m\|_\beta^4,
\end{equation*}
and thus
\begin{equation*}
\|u_m(t)\|_\beta \lesssim (T^*-t)^{-1/4},
\end{equation*}
for some $T^*>0$.
The rest of the proof proceeds similarly to that of a).
\end{proof}

For clarity, the corresponding conditions and results of Theorem \ref{t:exist} above are listed in Table \ref{t:spec-exist} below for the special standard model cases of the general three-parameter regularized model listed in Table \ref{t:spec}.
For the NS-$\alpha$-like case in the table, the allowed values for $\beta$ are
$\beta > \frac{5}{2}-\theta-2\theta_2$ with $\beta\geq \frac{1}{2} -\theta_2$ provided
$\theta\geq1$ and $\theta+2\theta_2\geq0$.

{\small
\begin{table}[ht]
\caption{\footnotesize
Existence results for some special cases of the model \pref{e:pde}.
The table gives values of $(a,b)$ and $(\gamma,p)$ for our recovered existence results for the standard models, giving existence of a solution $u\in L^\infty(V^a)\cap L^2(V^b)$, with $\dot{u}\in L^p(V^{-\gamma})$.
(In the NSV case, $\veps>0$, and in the NS-$\alpha$-like case, $\gamma \geq \max\{-\ttwo-1/2, \ttwo+1/2,n/2\}$.)
The last row indicates the local existence of solutions $u\in L^\infty(V^{\beta})\cap L^2(V^{\beta+\theta})$.
The result for each NSE model has a corresponding MHD analogue.
}

\begin{center}
\begin{tabular}{|c|c|c|c|c|c|c|c|} 
\hline\hline
Model  &  NSE      & Leray-$\alpha$ & ML-$\alpha$ & SBM        & NSV               & NS-$\alpha$       & NS-$\alpha$-like \\ \hline
 {\tiny Existence} & & & & & & & \\
{\tiny $a$, $b$} &0, 1 &0, 1 &-1, 0 &-1, 0 &-1, -1 &-1, 0 & ${-\ttwo}$, ${-\theta-\ttwo}$ \\
{\tiny $\gamma$, $p$} & 1, $\frac{4}{3}$  & 1, 2 &2, 2 &2, 2 & $1+\veps$, 2 & 2, 2 & $\gamma$, 2\\ 

{\tiny Local } & $\beta> \frac{3}{2}$& $\beta\geq 0$  &$\beta> -\frac{1}{2}$  &$\beta\geq -1$ &$\beta\geq -1$ & $\beta> -\frac{1}{2}$ & $\beta$  \\
\hline\hline   
\end{tabular}
\end{center}
\label{t:spec-exist}
\end{table}
}

\subsection{Uniqueness and stability}
\label{ss:stab}

Now we shall provide sufficient conditions for uniqueness and continuous dependence of on initial data for weak solutions of the general three-parameter family of regularized models.

\begin{theorem}\label{t:stab}
Let $\beta\geq-\theta_2$, and 
let $u_1,u_2\in L^{\infty}(0,T;V^{\beta}) \cap L^2(0,T;V^{\beta+\theta})$ be two solutions of \pref{e:weak} with initial conditions $u_1(0),u_2(0)\in V^{\beta}$, respectively.

a) Let $b:V^{\sigma_1}\times V^{\theta-\theta_2}\times V^{\sigma_2}\ra\R$ be bounded for some 
$\sigma_1\leq\theta-\theta_2$ and $\sigma_2\leq\theta+\theta_2$ with $\sigma_1+\sigma_2\leq\theta$.
Moreover, let $b(v,w,Nw)=0$ for any $v\in V^{\sigma_1}$ and $w\in V^{\sigma_2}$.
Then we have
\begin{equation}
\|u_1(t)-u_2(t)\|_{-\theta_2}\leq \phi(t)\|u_1(0)-u_2(0)\|_{-\theta_2},
\qquad 
t\in[0,T],
\end{equation}
where $\phi\in C([0,T])$ and $\phi(0)=1$.

b) Let $b:V^{\beta}\times V^{\beta}\times V^{\theta-\beta}\ra\R$ be bounded.
Then we have
\begin{equation}
\|u_1(t)-u_2(t)\|_{\beta}\leq \phi(t)\|u_1(0)-u_2(0)\|_{\beta},
\qquad 
t\in[0,T],
\end{equation}
where $\phi\in C([0,T])$ and $\phi(0)=1$.
\end{theorem}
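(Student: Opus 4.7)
The plan is to prove both parts by energy methods applied to the difference $w=u_1-u_2$, which (by bilinearity of $B$) satisfies
\begin{equation*}
\partial_t w + Aw + B(w,u_1) + B(u_2,w) = 0, \qquad w(0)=u_1(0)-u_2(0),
\end{equation*}
in the distributional sense. In each case I would pair this equation with an appropriate multiplier, use coercivity of $A$ to control the top-order diffusion contribution, and then invoke the Gr\"onwall inequalities from Appendix~\ref{ss:gronwall}.

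For part a), the natural multiplier is $Nw$, since by \pref{e:coercive-n} the quantity $\ang{w,Nw}$ is comparable to $\|w\|_{-\theta_2}^2$, and by the variant of \pref{e:coercive-a} stated just after that display (with $\Lambda^{2\beta}$ replaced by $N$), $\ang{Aw,Nw}\geq c_A\|w\|_{\theta-\theta_2}^2-C_A\|w\|_{-\theta_2}^2$. The antisymmetry hypothesis $b(v,w,Nw)=0$ kills $b(u_2,w,Nw)$, leaving only $b(w,u_1,Nw)$, which is bounded via
\begin{equation*}
|b(w,u_1,Nw)|\lesssim \|w\|_{\sigma_1}\|u_1\|_{\theta-\theta_2}\|Nw\|_{\sigma_2}\lesssim \|w\|_{\sigma_1}\|u_1\|_{\theta-\theta_2}\|w\|_{\sigma_2-2\theta_2},
\end{equation*}
using \pref{e:bdd-amn} for $N$. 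The ranges $\sigma_1\leq\theta-\theta_2$ and $\sigma_2-2\theta_2\leq\theta-\theta_2$ (the latter coming from $\sigma_2\leq\theta+\theta_2$) permit interpolation between $V^{-\theta_2}$ and $V^{\theta-\theta_2}$, yielding $\|w\|_{\sigma_1}\|w\|_{\sigma_2-2\theta_2}\leq \|w\|_{-\theta_2}^{2-\lambda}\|w\|_{\theta-\theta_2}^{\lambda}$ with $\lambda=(\sigma_1+\sigma_2)/\theta\leq 1$. A Young inequality with exponents $(2/\lambda,2/(2-\lambda))$ then absorbs a small multiple of $\|w\|_{\theta-\theta_2}^2$ into the dissipation, leaving a Gr\"onwall coefficient $g(t)=C\|u_1(t)\|_{\theta-\theta_2}^{2/(2-\lambda)}+2C_A$; since $2/(2-\lambda)\leq 2$ and $\|u_1\|_{\theta-\theta_2}^2\in L^1(0,T)$, one has $g\in L^1(0,T)$, so Gr\"onwall delivers the claimed estimate with $\phi(t)=\exp\bigl(\tfrac12\int_0^tg\bigr)$.

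Part b) proceeds analogously with the multiplier $\Lambda^{2\beta}w$, so that $\ang{w,\Lambda^{2\beta}w}=\|w\|_{\beta}^2$ (since $\Lambda^\beta:V^\beta\to V^0$ is an isometry) and, by \pref{e:coercive-a}, $\ang{Aw,\Lambda^{2\beta}w}\geq c_A\|w\|_{\theta+\beta}^2-C_A\|w\|_{\beta}^2$. The trilinear bound $b:V^\beta\times V^\beta\times V^{\theta-\beta}\to\R$ applied directly to both residual terms yields
\begin{equation*}
|b(w,u_1,\Lambda^{2\beta}w)|+|b(u_2,w,\Lambda^{2\beta}w)|\lesssim (\|u_1\|_\beta+\|u_2\|_\beta)\|w\|_\beta\|w\|_{\theta+\beta},
\end{equation*}
and Young's inequality absorbs $\|w\|_{\theta+\beta}^2$ into the dissipation, leaving a Gr\"onwall coefficient bounded by $C(1+\|u_1\|_\beta^2+\|u_2\|_\beta^2)$, which lies in $L^\infty(0,T)$ because $u_i\in L^\infty(V^\beta)$; Gr\"onwall again gives the stability estimate, and specializing to $u_1(0)=u_2(0)$ yields uniqueness.

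The main technical obstacle in both parts is justifying the formal energy identity obtained by testing against $Nw$ or $\Lambda^{2\beta}w$, since neither multiplier belongs to the class $C_0^\infty(0,T;\mathcal{V})$ allowed by \pref{e:weak}. The standard remedy is a density/regularization argument: either mollify in time, or exhaust $\mathcal{V}$ by the Galerkin subspaces from the proof of Theorem~\ref{t:exist}, derive the identity at the approximate level, and then pass to the limit using the available regularity $w\in L^\infty(V^\beta)\cap L^2(V^{\theta+\beta})$ and $\dot w\in L^1(V^{-\gamma})$ for suitable $\gamma$. Once this identity is in hand, the computations above proceed verbatim.
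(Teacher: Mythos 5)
Your proposal is correct and follows essentially the same route as the paper's proof: form the difference equation, test with $Nw$ (part a) or a power of the Laplacian comparable to $\Lambda^{2\beta}$ (part b), use the antisymmetry and coercivity hypotheses, interpolate with $\lambda=(\sigma_1+\sigma_2)/\theta\leq1$, apply Young, and conclude by Gr\"onwall. Your added care with the Young exponent $2/(2-\lambda)$ and your remark on justifying the formal energy identity via Galerkin or mollification are refinements the paper leaves implicit, not a different argument.
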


\begin{remark}
The trilinear forms $b_1$ and $b_3$ satisfy the hypotheses of a) provided
$\theta+\theta_1\geq\frac{1-k}{2}$,
$\theta+2\theta_1\geq k$,
$\theta+\theta_2\geq\frac12$,
$2\theta+2\theta_1+\theta_2>\frac{n+2}2$, and
$3\theta+2\theta_1+2\theta_2\geq2-k$, for some $k\in\{0,1\}$.
The forms $b_1$ and $b_3$ satisfy the hypothesis of b) for
$\beta > \frac{n+2}{2}-2(\theta_1+\theta_2)-\theta$ with 
$\beta \geq \frac{1-k}{2}-(\theta_1+\theta_2)$ provided
$2\theta_2+\theta\geq 1$ and
$2\theta_1+\theta \geq k$, for some $k\in\{0,1\}$ .
\end{remark}

\begin{remark}
The trilinear form $b_2$ satisfies the hypotheses of a) for
$\theta+2\theta_1\geq1$,
$\theta+\theta_1\geq \frac{1}{2},$
$\theta+\theta_2\geq0$,
$2\theta+2\theta_1+\theta_2>\frac{n+2}2$, and
$3\theta+2\theta_1+2\theta_2\geq1$.
The trilinear form $b_2$ satisfies the hypothesis of b) for
$\beta > \frac{n+2}{2}-2(\theta_1+\theta_2)-\theta$ with 
$\beta \geq \frac{1}{2}-(\theta_1+\theta_2)$ provided
$2\theta_2+\theta\geq 0$ and
$2\theta_1+\theta \geq 1$.
\end{remark}

\begin{remark}
The trilinear forms $b_4$ and $b_5$ satisfy the hypotheses of a) provided
$\theta+\theta_1\geq\frac12$,
$\theta+2\theta_1\geq 1$,
$\theta+\theta_2\geq\frac12$,
$2\theta+2\theta_1+\theta_2>\frac{n+2}2$, and
$3\theta+2\theta_1+2\theta_2\geq2$.
The forms $b_4$ and $b_5$ satisfy the hypothesis of b) for
$\beta > \frac{n+2}{2}-2(\theta_1+\theta_2)-\theta$ with 
$\beta \geq \frac{1}{2}-(\theta_1+\theta_2)$ provided
$2\theta_2+\theta\geq 1$ and
$2\theta_1+\theta \geq 1$.
\end{remark}

\begin{proof}[Proof of Theorem \ref{t:stab}]
Let $v=u_1-u_2$.
Then subtracting the equations for $u_1$ and $u_2$ we have
\begin{equation}
\ang{\dot{v},w} + \ang{Av,w} + b(v, u_1, w) + b(u_2, v, w) =0.
\end{equation}
Taking $w=Nv$, we infer
\begin{equation*}
\begin{split}
\frac{d}{dt}\|v\|^2_{-\theta_2} + c\|v\|^2_{\theta-\theta_2}
& \leq C\|v\|_{\sigma_1}\|u_1\|_{\theta-\theta_2}\|v\|_{\sigma_2-2\theta_2}
\\
& \leq C\|v\|_{-\theta_2}^{2-\lambda_1-\lambda_2}\|v\|_{\theta-\theta_2}^{\lambda_1 + \lambda_2}\|u_1\|_{\theta-\theta_2},
\end{split}
\end{equation*}
where $\lambda_1=\frac{\sigma_1+\theta_2}{\theta}$ and $\lambda_2=\frac{\sigma_2-\theta_2}{\theta}$.
By applying Young's inequality we get
\begin{equation*}
\frac{d}{dt}\|v\|^2_{-\theta_2} \leq C\|v\|^2_{-\theta_2}\|u_1\|^2_{\theta-\theta_2}.
\end{equation*}
Now Gr{\"o}nwall's inequality gives
\begin{equation}
\|v(t)\|^2_{-\theta_2} \leq \|v(0)\|^2_{-\theta_2} \exp\int_0^tC\|u_1\|^2_{\theta-\theta_2}.
\end{equation}

The part b) is proven similarly, taking, e.g.\ $w=(I-\Delta)^{\beta} v$.
\end{proof}

To clarify these results in the case of specific models, the corresponding conditions and results of Theorem \ref{t:stab} above are listed in the Table \ref{t:spec-stab} below for the special case models listed in Table \ref{t:spec}.
{\small
\begin{table}[ht]
\caption{\footnotesize
Uniqueness results for some special cases of the model \pref{e:pde}.
The table gives values of $\beta$ for our recovered uniqueness results for the standard models, where $u_0\in V^\beta$ and where $u\in L^\infty (V^{\beta})\cap L^2(V^{\beta+\theta})$.
(In the NS-$\alpha$-like case, the requirement on $\beta$ is that $\beta > \max\{-\theta_2, 1/2-\theta_2,  5/2-\theta-2\theta_2 \}$).
The result for each NSE model has a corresponding MHD analogue.
}
\begin{center}
\begin{tabular}{|c|c|c|c|c|c|c|c|} 
\hline\hline
Model  &  NSE      & Leray-$\alpha$ & ML-$\alpha$ & SBM        & NSV               & NS-$\alpha$       & NS-$\alpha$-like \\ \hline

&&&&&&&\\
{\tiny Uniqueness} & $\beta> \frac{3}{2}$& $\beta\geq 0$  &$\beta> -\frac{1}{2}$  &$\beta\geq -1$ &$\beta\geq -1$ & $\beta> -\frac{1}{2}$ & $\beta$  \\
                 &                     &                & or $\beta=-1$         &               &               & or $\beta=-1$         &\\
\hline\hline   
\end{tabular}
\end{center}
\label{t:spec-stab}
\end{table}
}

\subsection{Regularity}
\label{ss:reg}

In this subsection, we develop a regularity result on weak solutions for the general family of regularized models.

\begin{theorem}\label{t:reg}
Let $u\in L^2(0,T;V^{\theta-\theta_2})$ be a solution to \pref{e:weak}, and
with some $\beta>-\theta_2$, let the following conditions hold.
\begin{itemize}
\item[i)] 
$b:V^{\alpha}\times V^{\alpha}\times V^{\theta-\beta}\ra\R$ is bounded, where $\alpha=\min\{\beta,\theta-\theta_2\}$;
\item[ii)] 
$b(v,w,Nw)=0$ for any $v,w\in \mathcal{V}$;
\item[iii)] 
$u(0)\in V^{\beta}$, and $f\in L^2(0,T;V^{\beta-\theta})$.
\end{itemize}
Then we have
\begin{equation}\label{e:reg}
u\in L^\infty(0,T;V^{\beta})\cap L^2(0,T;V^{\beta+\theta}).
\end{equation}
\end{theorem}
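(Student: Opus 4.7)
The plan is to test the weak formulation \pref{e:weak} with $w=\Lambda^{2\beta}u$ so as to extract the $V^\beta$-energy, exploit the coercivity of $A$ to absorb the diffusion, bound the nonlinearity using hypothesis (i), and close with Gr\"onwall's inequality using the given a priori control $u\in L^2(0,T;V^{\theta-\theta_2})$. To justify the formal calculation, I would work with Galerkin approximants $u_m\in C^1(0,T;V_m)$, where $V_m$ is spanned by the first $m$ eigenfunctions of the isometry $\Lambda$; this choice makes $V_m$ invariant under both $N$ and $\Lambda^{2\beta}$, so testing with either $Nu_m$ or $\Lambda^{2\beta}u_m$ inside the Galerkin scheme is legitimate. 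Global-in-time existence of $u_m$ will follow from standard ODE theory together with hypothesis (iii).

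The argument proceeds in two tiers. First, testing the Galerkin equation with $Nu_m$ and invoking the antisymmetry (ii) to eliminate the nonlinear contribution, exactly as in the proof of Theorem \ref{t:exist}, yields uniform bounds
\[
u_m\in L^\infty(0,T;V^{-\theta_2})\cap L^2(0,T;V^{\theta-\theta_2}),
\]
and in particular a uniform bound on $\int_0^T\|u_m\|_{\theta-\theta_2}^2\,dt$. Second, testing with $w=\Lambda^{2\beta}u_m$ gives
\[
\tfrac12\tfrac{d}{dt}\|u_m\|_\beta^2+\ang{Au_m,\Lambda^{2\beta}u_m}=-b(u_m,u_m,\Lambda^{2\beta}u_m)+\ang{f,\Lambda^{2\beta}u_m},
\]
where coercivity \pref{e:coercive-a} handles the $A$-term and Cauchy--Schwarz with Young controls the forcing via $\|f\|_{\beta-\theta}\|u_m\|_{\theta+\beta}$. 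By hypothesis (i), $|b(u_m,u_m,\Lambda^{2\beta}u_m)|\leq C\|u_m\|_\alpha^2\|u_m\|_{\theta+\beta}$. The crucial step is an asymmetric split of $\|u_m\|_\alpha^2$ engineered around the defining choice $\alpha=\min\{\beta,\theta-\theta_2\}$:
\[
\|u_m\|_\alpha^2\leq C\|u_m\|_\beta\cdot\|u_m\|_{\theta-\theta_2},
\]
which exploits the two separate embeddings $V^\beta\hookrightarrow V^\alpha$ and $V^{\theta-\theta_2}\hookrightarrow V^\alpha$. Combined with Young, this yields
\[
\tfrac{d}{dt}\|u_m\|_\beta^2+c\|u_m\|_{\theta+\beta}^2\leq C\bigl(1+\|u_m\|_{\theta-\theta_2}^2\bigr)\|u_m\|_\beta^2+C\|f\|_{\beta-\theta}^2,
\]
whose coefficient $1+\|u_m\|_{\theta-\theta_2}^2$ is uniformly $L^1(0,T)$ from the first tier. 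Gr\"onwall then delivers uniform bounds on $u_m$ in $L^\infty(0,T;V^\beta)\cap L^2(0,T;V^{\theta+\beta})$, and the passage to the limit proceeds along the lines of Theorem \ref{t:exist}, with weak lower semicontinuity of norms yielding \pref{e:reg}.

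The main obstacle is avoiding a superlinear Gr\"onwall term. A naive Young estimate on $\|u_m\|_\alpha^2\|u_m\|_{\theta+\beta}$ produces the quantity $\|u_m\|_\alpha^4\leq\|u_m\|_{\theta-\theta_2}^4$, which is not guaranteed to be $L^1$ in time from the prior bound $u\in L^2(V^{\theta-\theta_2})$, and Gr\"onwall fails. The asymmetric split above uses both available embeddings simultaneously to convert the nonlinear bound into an $L^1$-coefficient linear differential inequality. A secondary subtlety is identifying the Galerkin limit with the given weak solution $u$ in the absence of a general uniqueness theorem at this level of regularity; this is handled by viewing the calculation purely as an a priori bound inherited by any weak-limit point of an approximation of $u$, so that the regularity upgrade transfers to $u$ itself.
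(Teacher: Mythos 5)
Your central energy estimate is exactly the one the paper uses: testing with $\Lambda^{2\beta}u$, and the asymmetric split $\|u\|_\alpha^2\le\|u\|_\beta\,\|u\|_{\theta-\theta_2}$ (valid because $\alpha=\min\{\beta,\theta-\theta_2\}$ makes both embeddings $V^\beta\hookrightarrow V^\alpha$ and $V^{\theta-\theta_2}\hookrightarrow V^\alpha$ available) is precisely what converts the nonlinear term into the Gr\"onwall-friendly coefficient $\|u\|_{\theta-\theta_2}^2\|u\|_\beta^2$, integrable in time by the standing assumption $u\in L^2(0,T;V^{\theta-\theta_2})$. So the analytic heart of your argument is correct and coincides with the paper's.

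The gap is in the scaffolding that attaches this estimate to the \emph{given} solution $u$. The theorem asserts extra regularity for a prescribed weak solution, not merely the existence of some regular solution with the same data. Your Galerkin construction produces approximants $u_m$ whose limit is \emph{a} weak solution enjoying the $V^\beta$ bounds, but without a uniqueness statement at the ambient regularity level there is no reason this limit coincides with the $u$ you were handed. Your proposed repair --- viewing the calculation ``purely as an a priori bound inherited by any weak-limit point of an approximation of $u$'' --- does not close this: the given $u$ is not known to arise as a weak-limit point of your eigenfunction Galerkin scheme, and an a priori bound on approximants transfers only to their limits. The paper resolves exactly this issue by a different route: it first invokes Theorem \ref{t:exist}.b) to produce a local-in-time solution in $L^\infty(0,s;V^{\beta})\cap L^2(0,s;V^{\beta+\theta})$ and Theorem \ref{t:stab}.a) to identify it with $u$ on $[0,s)$, so that the pairing $\ang{\dot{u},\Lambda^{2\beta}u}$ is legitimate for $u$ itself on that interval; the Gr\"onwall bound then shows $\|u(t)\|_\beta$ is controlled by quantities bounded on all of $[0,T]$, so the local regularity continues to $T$, and $u\in L^2(0,T;V^{\theta+\beta})$ follows from the coercivity of $A$. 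To repair your write-up you would either need to establish uniqueness in the class $L^\infty(V^{-\theta_2})\cap L^2(V^{\theta-\theta_2})$ under the stated hypotheses (as in Theorem \ref{t:stab}.a)) so that the Galerkin limit must equal $u$, or restructure the argument along the paper's local-regularity-plus-continuation lines.
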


\begin{remark}
Let $4\theta+4\theta_1+2\theta_2>n+2$, 
$2\theta+2\theta_1\geq1-k$,
$\theta+2\theta_2\geq1$, 
$3\theta+4\theta_1\geq1$,
$\theta+2\theta_1\geq \ell$, 
and $3\theta+2\theta_1+2\theta_2\geq2-\ell$,
for some $k,\ell\in\{0,1\}$.
Let
$$\textstyle
\beta\in(\frac{n+2}{2}-2(\theta_1+\theta_2)-\theta, 3\theta+2\theta_1-\frac{n+2}2)\cap[\frac{1-\ell}{2}-\theta_1-\theta_2,\min\{2\theta+\theta_2-1,2\theta-\theta_2+2\theta_1-k\}].
$$
Then the trilinear forms $b_1$ and $b_3$ satisfy the hypotheses of the above theorem.
\end{remark}

\begin{remark}
Let 
$4\theta+4\theta_1+2\theta_2>n+2$, 
$\theta+2\theta_2\geq0$,
and
$\theta+2\theta_1\geq1$.
Let
$$\textstyle
\beta\in (\frac{n+2}{2}-2(\theta_1+\theta_2)-\theta, 3\theta+2\theta_1-\frac{n+2}2)
\cap
[\frac{1}{2}-\theta_1-\theta_2, \min\{2\theta+\theta_2,2\theta-\theta_2+2\theta_1-1\}].
$$
Then the trilinear form $b_2$ satisfies the hypotheses of the above theorem.
\end{remark}

\begin{remark}
Let $4\theta+4\theta_1+2\theta_2>n+2$, 
$\theta+2\theta_2\geq1$,
and
$\theta+2\theta_1\geq 1$.
Let 
$$\textstyle
\beta\in (\frac{n+2}{2}-2(\theta_1+\theta_2)-\theta, 3\theta+2\theta_1-\frac{n+2}2)
\cap
[ \frac{1}{2}-\theta_1-\theta_2 , \min\{2\theta+\theta_2-1,2\theta-\theta_2+2\theta_1-1\} ].
$$
Then the trilinear forms $b_4$ and $b_5$ satisfy the hypotheses of the above theorem.
\end{remark}

\begin{proof}[Proof of Theorem \ref{t:reg}]
By Theorems \ref{t:exist}.b) and \ref{t:stab}.a), there is $s>0$ depending on $\|u(0)\|_{\beta}$
such that $u\in L^\infty(0,s;V^{\beta})\cap L^2(0,s;V^{\beta+\theta})$.
With $I=[0,s)$, we have
\begin{equation}\label{e:innpro}
\ang{\dot{u},\Lambda^{2\beta}u} + \ang{Au,\Lambda^{2\beta}u} + b(u, u, \Lambda^{2\beta}u) =\ang{f,\Lambda^{2\beta}u},
\qquad a.e.\ \textrm{ in }I.
\end{equation}
By employing the boundedness of $b$ and the coercivity of $A$, we infer
\begin{equation*}
\frac{d}{dt}\|u\|_\beta^2 \lesssim \|f\|_{\beta-\theta}^2 +  \|u\|_{\theta-\theta_2}^2 \|u\|_{\beta}^2, 
\qquad a.e.\ \textrm{ in }I,
\end{equation*}
and using Gr\"onwall's inequality, we conclude
\begin{equation}
\|u(t)\|^2_\beta \lesssim \int_0^t \|f\|_{\beta-\theta}^2 + \|u(0)\|_\beta^2 \exp \int_0^t  C \|u\|^2_{\theta-\theta_2},
\qquad a.e.\ \textrm{ in }I,
\end{equation}
where the integral in the exponent is uniformly bounded since $u\in L^2(0,T;V^{\theta-\theta_2})$.
Therefore we have $u\in L^\infty(0,T;V^{\beta})$,
which transfers to $u\in L^2(0,T;V^{\theta+\beta})$ by the coercivity of $A$.
\end{proof}

Again for clarity, the corresponding conditions and results of Theorem \ref{t:reg} above are listed in the Table \ref{t:spec-reg} below for the special case models listed in Table \ref{t:spec}.
For the NS-$\alpha$-like case in the table, the allowed values for $\beta$ are $\beta\leq2\theta-\ttwo-1$ with $ \beta<3\theta-\frac{5}2$, provided that $\theta\geq\frac12$ and $4\theta+2\theta_2>5$.

{\small
\begin{table}[ht]
\caption{\footnotesize
Regularity results for some special cases of the model \pref{e:pde}.
The table gives values of $\beta$ for our recovered local and global regularity results for the standard models, where $u_0\in V^\beta$ and where $u\in L^\infty (V^{\beta})\cap L^2(V^{\beta+\theta})$.
(In the NS-$\alpha$-like case, see the text for the allowed values of $\beta$.)
Again, the result for each NSE model has a corresponding MHD analogue.
}

\begin{center}
\begin{tabular}{|c|c|c|c|c|c|c|c|} 
\hline\hline
Model  &  NSE      & Leray-$\alpha$ & ML-$\alpha$ & SBM        & NSV               & NS-$\alpha$       & NS-$\alpha$-like \\ 
 {\tiny Regularity} &  & $\beta\leq 1$& $\beta\leq \frac12$  &$\beta\leq 2$  &$\beta\leq -\frac{1}{2}$  & $\beta\leq 0$ & $\beta$  \\
\hline\hline   
\end{tabular}
\end{center}
\label{t:spec-reg}
\end{table}
}

\section{Singular perturbations}
\label{s:pert}

In this section, we will consider the situation where the operators $A$ and $B$ in the general three-parameter family of regularized models represented by problem \pref{e:op} have values from a convergent (in a certain sense) sequence, and study the limiting behavior of the corresponding sequence of solutions.
As special cases we have inviscid limits in viscous equations and $\alpha\ra0$ limits in the $\alpha$-models.

\subsection{Perturbations to the linear part}
\label{ss:pert-lin}

Consider the problem
\begin{equation}\label{e:pert-lin-0}
\dot{u} + A u + B(u,u) = f,
\end{equation}
and its perturbation
\begin{equation}\label{e:pert-lin}
\dot{u}_i + A_i u_i + B(u_i,u_i) = f, 
\qquad
(i\in\N),
\end{equation}
where $A$, $B$, and $N$ (that will appear below) satisfy the assumptions stated in Section \ref{s:prelim},
and for $i\in\N$, $A_i:V^{s}\to V^{s-2\eps}$ is a bounded linear operator satisfying
\begin{equation}\label{e:pert-lin-ell}
\|A_iv\|_{-\eps-\theta_2}^2+\|v\|_{\theta-\theta_2}^2
\lesssim\ang{A_iv,Nv}+\|v\|_{-\theta_2}^2,
\quad
v\in V^{\eps-\theta_2}.
\end{equation}
Assuming that both problems \pref{e:pert-lin-0} and \pref{e:pert-lin} have the same initial condition
$u_0$,
and that $A_i\to A$ in some topology,
we are concerned with the behavior of $u_i$ as $i\to\infty$.
We will also assume that $\eps\geq\theta$.

\begin{theorem}\label{t:pert-lin}
Assume the above setting, and
in addition let the following conditions hold.
\begin{itemize}
\item[i)] 
$b:V^{\sigma_1}\times V^{\sigma_2}\times V^{\gamma}\ra\R$ is bounded for some
$\sigma_j\in[-\theta_2,\theta-\theta_2]$, $j=1,2$, and $\gamma\in[\eps+\theta_2,\infty)\cap(\theta_2,\infty)$;
\item[ii)] 
$b(v,v,Nv)=0$ for any $v\in\mathcal{V}$;
\item[iii)] 
$b:V^{\bar\sigma_1}\times V^{\bar\sigma_2}\times V^{\bar\gamma}\ra\R$ is bounded for some 
$\bar\sigma_j<\theta-\theta_2$, $j=1,2$, and $\bar\gamma\geq\gamma$;
\item[iv)] $u_0\in V^{-\theta_2}$, and $f\in L^2(0,T;V^{-\theta-\theta_2})$, $T>0$;
\item[v)] $A_i$ converge weakly to $A$ as $i\to\infty$.
\end{itemize}
Then, there exists a solution $u\in L^\infty (0,T;V^{-\theta_2}) \cap L^2(0,T;V^{\theta-\theta_2})$ to \pref{e:pert-lin-0}
such that up to a subsequence, 
\begin{equation}\label{e:pert-conv-u}
\begin{split}
&u_i \rightarrow u \mbox{ weak-star in } L^\infty(0,T;V^{-\theta_2}),\\
&u_{i} \rightarrow u \mbox{ weakly in } L^2(0,T;V^{\theta-\theta_2}), \\
&u_{i} \rightarrow u \mbox{ strongly in } L^2(0,T;V^{s}) \mbox{ for any }s<\theta-\theta_2,
\end{split}
\end{equation}
as $i\to\infty$.
\end{theorem}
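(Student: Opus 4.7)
The plan is to mirror the proof of Theorem \ref{t:exist}, with the sequence $\{u_i\}$ of solutions to the perturbed problems playing the role of the Galerkin approximations $\{u_m\}$ there. First, I would apply Theorem \ref{t:exist}.a) to each perturbed problem \pref{e:pert-lin} to produce a weak solution $u_i \in L^\infty(0,T;V^{-\theta_2})\cap L^2(0,T;V^{\theta-\theta_2})$. This requires only that the hypothesis \pref{e:pert-lin-ell} on $A_i$ substitute adequately for the coercivity \pref{e:coercive-a} in the original proof; inspection of that proof shows that the only way coercivity enters the basic energy estimate is through $\ang{A_iu_i,Nu_i}$ (together with the antisymmetry hypothesis ii)), so \pref{e:pert-lin-ell} is exactly the structural replacement needed.

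Next, I would derive uniform-in-$i$ bounds. Testing \pref{e:pert-lin} against $Nu_i$, invoking ii) to eliminate the nonlinear term, and using \pref{e:pert-lin-ell} together with Young's and Gr\"onwall's inequalities, I obtain
\begin{equation*}
\|u_i\|_{L^\infty(0,T;V^{-\theta_2})}^2 + \|u_i\|_{L^2(0,T;V^{\theta-\theta_2})}^2 + \|A_iu_i\|_{L^2(0,T;V^{-\eps-\theta_2})}^2 \lesssim \|u_0\|_{-\theta_2}^2 + \|f\|_{L^2(V^{-\theta-\theta_2})}^2,
\end{equation*}
uniformly in $i$. Rearranging the equation to $\dot{u}_i = f - A_iu_i - B(u_i,u_i)$ and estimating the three terms in $V^{-\gamma}$ exactly as in Theorem \ref{t:exist} (using hypothesis i) on $b$, where the requirement $\gamma \geq \eps+\theta_2$ is precisely what makes $A_iu_i$ lie in $L^2(V^{-\gamma})$), I obtain a uniform bound on $\dot{u}_i$ in $L^p(0,T;V^{-\gamma})$ with the same exponent $p$ as in Theorem \ref{t:exist}. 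The Aubin--Lions-type compactness result (Theorem \ref{t:aubin}) then extracts a subsequence, still denoted $u_i$, and a limit $u$ for which the three convergences in \pref{e:pert-conv-u} hold.

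It remains to pass to the limit in the weak formulation. The time-derivative term converges trivially; the nonlinear term is handled precisely as in the proof of Theorem \ref{t:exist}, splitting it into three pieces and using the strong convergence $u_i\to u$ in $L^2(0,T;V^s)$ for any $s<\theta-\theta_2$ together with hypothesis iii). The delicate step, which I expect to be the main obstacle, is passing to the limit in the linear term $\int_0^T \ang{A_iu_i,w}\,dt$ for a test function $w\in C_0^\infty(0,T;\mathcal{V})$: here both factors converge only weakly, so one cannot simply pair weak$\times$weak. The natural strategy is to transfer $A_i$ onto the smooth test function via its (extended) self-adjoint structure, rewriting the term as $\int_0^T \ang{u_i,A_iw}\,dt$, and then to combine the weak convergence $u_i \weak u$ in $L^2(V^{\theta-\theta_2})$ with the strong convergence $A_iw \to Aw$ in $L^2(V^{\theta_2-\theta})$ that is guaranteed by the assumed weak (indeed, here it suffices pointwise-in-$t$ on the fixed test function $w$) convergence $A_i \to A$. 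Since $w\in C_0^\infty$, no boundary terms appear and the initial condition is inherited via the standard argument of Theorem \ref{t:exist}, completing the identification of $u$ as a weak solution of \pref{e:pert-lin-0}.
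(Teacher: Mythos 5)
Your proposal follows the paper's proof essentially step for step: existence of each $u_i$ via Theorem~\ref{t:exist}, the energy estimate obtained by pairing with $Nu_i$ and using ii) together with \pref{e:pert-lin-ell} in place of coercivity, the resulting uniform bounds on $u_i$ in $L^\infty(V^{-\theta_2})\cap L^2(V^{\theta-\theta_2})$ and on $A_iu_i$ in $L^2(V^{-\eps-\theta_2})$, the bound on $\dot u_i$ from the three-term decomposition, compactness via Theorem~\ref{t:aubin}, and the limit passage in the nonlinear term exactly as in Theorem~\ref{t:exist}. Up to that point there is nothing to object to.

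The one place you go beyond the paper is the limit passage in the linear term, and there your justification overclaims. The paper itself disposes of this step in a single clause (``taking into account the weak convergence of $A_i$ to $A$\dots''), so you are right to single it out as the delicate point; but your mechanism rests on the assertion that hypothesis v) guarantees $A_iw\to Aw$ \emph{strongly} in $L^2(0,T;V^{\theta_2-\theta})$. Weak convergence of the operators $A_i$ (in the weak operator topology, which is all that v) states) gives only $A_iw\rightharpoonup Aw$ weakly, and weak--weak pairings do not pass to the limit; nor does the uniform bound $\|A_iw\|_{-\eps-\theta_2}\lesssim\|w\|_{\eps-\theta_2}$ extracted from \pref{e:pert-lin-ell} place $A_iw$ in a space compactly embedded in $V^{\theta_2-\theta}$ in general (with $\eps\geq\theta$ this would force $\theta_2<0$). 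To close the argument one must either strengthen v) to convergence in the strong operator topology on smooth test functions --- which does hold in the intended examples, e.g.\ $A_i=-\nu_i\Delta\mathcal{S}$ with $\nu_i\to0$, where $A_iw\to0$ in every $V^r$ --- or instead extract a weak limit of the sequence $A_iu_i$ (which is uniformly bounded in $L^2(V^{-\eps-\theta_2})$ by \pref{e:pert-3}) and identify it as $Au$. Note also that your transfer $\ang{A_iu_i,w}=\ang{u_i,A_iw}$ uses self-adjointness of each $A_i$, which the setting preceding the theorem does not actually state (only $A$ is assumed self-adjoint); this is presumably intended, but it is an additional hypothesis you are importing silently.
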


\begin{proof}
Firstly, from Theorem~\ref{t:exist}, we know that 
for $i\in\N$
there exists a solution $u_i\in L^\infty (0,T;V^{-\theta_2}) \cap L^2(0,T;V^{\eps-\theta_2})$
to \pref{e:pert-lin}.
Duality pairing \pref{e:pert-lin} with $Nu_i$ and using elementary inequalities, we have
\begin{equation}\label{e:pert-1}
\frac{d}{dt}\ang{u_i, N u_i} + 2 \ang{A_iu_i, N u_i} = 2 \ang{f, Nu_i}\\
\lesssim \veps^{-1}\|f\|_{-\theta-\theta_2}^2 + \veps\|u_i\|_{\theta-\theta_2}^2.
\end{equation}
Choosing $\veps>0$ small enough, then using \pref{e:pert-lin-ell}, by Gr\"onwall's inequality we have
\begin{equation}\label{e:pert-2}
\|u_i (t)\|_{-\theta_2}^2
\lesssim
e^{Ct}.
\end{equation}
Moreover, integrating \pref{e:pert-1}, and taking into account \pref{e:pert-2} and \pref{e:pert-lin-ell}, we infer
\begin{equation}\label{e:pert-3}
\|A_iu_i\|_{L^2(0,t;V^{-\eps-\theta_2})}^2+\|u_i\|_{L^2(0,t;V^{\theta-\theta_2})}^2
\leq \psi(t),\qquad t\in[0,\infty),
\end{equation}
where $\psi:[0,\infty)\ra(0,\infty)$ is a continuous function.
For any fixed $T>0$, this gives  $u_i \in L^\infty (0,T;V^{-\theta_2})\cap L^2 (0,T;V^{\theta-\theta_2})$ with uniformly (in $i$) bounded norms.
On the other hand, we have
\begin{equation}
\|\dot{u}_i\|_{-\gamma} \leq  \|A_iu_i\|_{-\gamma} + \|B(u_i,u_i)\|_{-\gamma} + \|f\|_{-\gamma}.
\end{equation}
By estimating the second term in the right hand side as in the proof of Theorem \ref{t:exist}, 
and taking into account \pref{e:pert-3},
we conclude that $\dot{u}_i$ is uniformly bounded in $L^2(0,T;V^{-\gamma})$.
By employing Theorem \ref{t:aubin},
and passing to a subsequence,
we infer the existence of $u$ satisfying \pref{e:pert-conv-u}.
Now taking into account the weak convergence of $A_i$ to $A$,
the rest of the proof proceeds similarly to that of Theorem \ref{t:exist}.
\end{proof}

For example, setting $\veps=1$, with $\theta=0$ and $\theta_2=1$, and checking all the requirements $i)-v)$ of Theorem~\ref{t:pert-lin}, the viscous solutions to the SBM converge to the inviscid solution as the viscosity tends to zero.
Recall that the global existence of weak solution to the inviscid SBM (first established in~\cite{CLT06}) is also established in Theorem~\ref{t:exist}.
Similarly, setting $\veps=0$, with $\theta=0$ and $\theta_2=1$, the viscous solutions to the Leray-$\alpha$ model converge to the inviscid solution as the viscosity tends to zero.
This result gives another proof of the global existence of a weak solution for the inviscid Leray-$\alpha$ model.

On the other hand, the convergence of viscous solutions of ML-$\alpha$ and NS-$\alpha$ to its inviscid solutions, respectively, are not covered here since both models fail condition $i)$ of Theorem~\ref{t:pert-lin}.
Notice that the global existence of weak solution to the inviscid ML-$\alpha$ and NS-$\alpha$ are not established in Theorem~\ref{t:exist}.
Besides the inviscid SBM, there are no global well-posedness results reported previously in the literature for the other inviscid $\alpha$-models.

\subsection{Perturbations involving the nonlinear part}
\label{ss:pert-nonlin}

For $i\in\N$, let $A_i:V^{s}\to V^{s-2\eps}$ and $N_i:V^{s}\to V^{s+2\eps_2}$ be bounded linear operators,
satisfying
\begin{equation}\label{e:pert-nl-ell}
\|v\|_{\theta+\theta_2}^2
\lesssim\ang{A_iN_i^{-1}v,v}+\|v\|_{\theta_2}^2,
\qquad
v\in V^{\theta+\theta_2},
\end{equation}
and
\begin{equation}\label{e:pert-nl-nell}
\|v\|_{\theta_2}^2
\lesssim\ang{N_i^{-1}v,v},
\qquad
v\in V^{\theta_2},
\end{equation}
where we also assumed that $N_i$ is invertible.
In this subsection, we continue with perturbations of \pref{e:pert-lin-0}
of the form
\begin{equation}\label{e:pert-nonlin}
\dot{u}_i + A_i u_i + B_i(u_i,u_i) = f, 
\qquad
(i\in\N),
\end{equation}
where $B_i$ is some bilinear map.
Again assuming that both problems \pref{e:pert-lin-0} and \pref{e:pert-nonlin} have the same initial condition
$u_0$,
and that $A_i\to A$ and $B_i\to B$ in some topology,
we are concerned with the behavior of $u_i$ as $i\to\infty$.
For reference, define the trilinear form $b_i(u,v,w)=\ang{B_i(u,v),w}$.

\begin{theorem}\label{t:pert-nonlin}
Assume the above setting, and
in addition let the following conditions hold.
\begin{itemize}
\item[i)] 
$b_i:V^{\sigma}\times V^{\sigma}\times V^{\gamma}\ra\R$ is uniformly bounded for some
$\sigma\in[-\theta_2,\theta+\theta_2-2\eps_2)$, and $\gamma\in[\theta+\eps_2,\infty)\cap(\eps_2,\infty)$;
\item[ii)] 
$b_i(v,v,N_iv)=0$ for any $v\in\mathcal{V}$;
\item[iii)] 
$u_0\in V^{-\theta_2}$, and $f\in L^2(0,T;V^{-\theta-\theta_2})$, $T>0$;
\item[iv)] $A_i:V^{\theta-\theta_2}\to V^{-\gamma}$ is uniformly bounded and converges weakly to $A$;
\item[v)] $N_i^{-1}:V^{s+2\theta_2}\to V^{s+2\theta_2-2\eps_2}$ is uniformly bounded;
\item[vi)] $N_i^{-1}N:V^{\theta-\theta_2}\to V^{\theta+\theta_2-2\eps_2}$ converges strongly to the identity map;
\item[vii)] For any $v\in V^{\theta-\theta_2}$, $B_i(v,v)$ converges weakly to $B(v,v)$.
\end{itemize}
Then, there exists a solution $u\in L^\infty (0,T;V^{-\theta_2}) \cap L^2(0,T;V^{\theta-\theta_2})$ to \pref{e:pert-lin-0}
such that up to a subsequence, $y_i=N^{-1}N_iu_i$ satisfies
\begin{equation}\label{e:pertn-conv-u}
\begin{split}
&y_i \rightarrow u \mbox{ weak-star in } L^\infty(0,T;V^{-\theta_2}),\\
&y_{i} \rightarrow u \mbox{ weakly in } L^2(0,T;V^{\theta-\theta_2}), \\
&y_{i} \rightarrow u \mbox{ strongly in } L^2(0,T;V^{s}) \mbox{ for any }s<\theta-\theta_2,
\end{split}
\end{equation}
as $i\to\infty$.
\end{theorem}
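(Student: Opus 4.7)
The strategy mirrors the proof of Theorem~\ref{t:pert-lin}: derive uniform bounds on the approximating sequence via an energy identity, upgrade weak compactness to strong compactness through a time-derivative bound combined with Aubin--Lions, and then pass to the limit term by term in the weak formulation. The twist is that $N_i$ varies with $i$, so the natural dynamical variable expected to converge is $y_i = N^{-1}N_iu_i$ rather than $u_i$ itself, as hypothesis vi) already suggests.

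First, I would derive a uniform energy estimate by pairing \pref{e:pert-nonlin} with $N_iu_i$; the trilinear term vanishes by ii), leaving
\begin{equation*}
\tfrac12\tfrac{d}{dt}\ang{u_i, N_iu_i} + \ang{A_iN_i^{-1}(N_iu_i), N_iu_i} = \ang{f, N_iu_i}.
\end{equation*}
Substituting $v=N_iu_i$ in \pref{e:pert-nl-ell} makes the second term on the left coercive in $\|N_iu_i\|_{\theta+\theta_2}$ modulo a $\|N_iu_i\|_{\theta_2}^2$ term, while \pref{e:pert-nl-nell} identifies $\ang{u_i,N_iu_i}$ with $\|N_iu_i\|_{\theta_2}^2$ up to constants; the right-hand side is absorbed by Young's inequality as in \pref{e:pert-1}. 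Gr\"onwall's lemma then yields uniform bounds on $N_iu_i$ in $L^\infty(0,T;V^{\theta_2})\cap L^2(0,T;V^{\theta+\theta_2})$, which by the boundedness of $N^{-1}:V^{s+2\theta_2}\to V^{s}$ translate to uniform bounds on $y_i$ in $L^\infty(V^{-\theta_2})\cap L^2(V^{\theta-\theta_2})$.

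Second, to secure compactness I would bound $\dot y_i = N^{-1}N_i\dot u_i$. From \pref{e:pert-nonlin} and hypotheses i), iv), v), the interpolation argument employed for $\dot u_m$ in the proof of Theorem~\ref{t:exist} yields $\dot y_i$ uniformly bounded in $L^p(0,T;V^{-\gamma})$ for some $p>1$; the extra room $\sigma<\theta+\theta_2-2\eps_2$ in the trilinear bound of i) is precisely what absorbs the $N^{-1}N_i$ factor via v). The Aubin--Lions compactness theorem~\ref{t:aubin} then produces a subsequence along which $y_i$ satisfies the three convergence modes in \pref{e:pertn-conv-u}.

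Third, and most delicate, is the passage to the limit in the weak formulation against a test function $w\in C^\infty_0(0,T;\mathcal{V})$. For the linear term I would split $\ang{A_iu_i,w} = \ang{A_i(u_i-y_i),w} + \ang{A_iy_i,w}$: the first piece vanishes in the limit because $u_i-y_i = (N_i^{-1}N - I)y_i$ and vi) forces $N_i^{-1}N\to I$ strongly on the relatively compact range of $y_i$, while the second piece converges to $\int\ang{Au,w}\,dt$ by combining the weak convergence in iv) with the strong convergence of $y_i$. The nonlinear term is treated by the standard trilinear splitting
\begin{equation*}
b_i(u_i,u_i,w) = b_i(u_i-u, u_i, w) + b_i(u, u_i-u, w) + b_i(u,u,w),
\end{equation*}
where the first two summands vanish through the uniform trilinear bound of i) paired with the strong $L^2(V^s)$-convergence of $u_i$ (transferred from that of $y_i$ using vi) and v)), and the last summand converges by hypothesis vii). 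The main obstacle will be the bookkeeping that ensures the compactness range $s<\theta-\theta_2$ is simultaneously compatible with the boundedness exponent $\sigma<\theta+\theta_2-2\eps_2$ in i) and with the test-function range $V^{\gamma}$; once this is checked, uniqueness of the weak limit identifies $u$ as a solution of \pref{e:pert-lin-0}.
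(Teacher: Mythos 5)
Your proposal is correct and follows essentially the same route as the paper's proof: the same energy estimate obtained by pairing with $N_iu_i$ and invoking \pref{e:pert-nl-ell}--\pref{e:pert-nl-nell} with Gr\"onwall, the same time-derivative bound plus Aubin--Lions compactness (the paper applies it to $v_i=N_iu_i$ and then sets $y_i=N^{-1}v_i$, which is equivalent to your working with $y_i$ directly), and the same term-by-term limit passage using the decomposition $u_i-u=N_i^{-1}N(y_i-u)+(N_i^{-1}N-I)u$ together with hypotheses iv)--vii) and the standard bilinear splitting. The minor differences (your split $A_i(u_i-y_i)+A_iy_i$ versus the paper's $A_i(u_i-u)+(A_i-A)u$) are cosmetic.
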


\begin{proof}
Firstly, by Theorem \ref{t:exist}, we know that
for $i\in\N$
there exists a solution $u_i\in L^\infty (0,T;V^{-\eps_2}) \cap L^2(0,T;V^{\eps-\eps_2})$
to \pref{e:pert-nonlin}.
Pairing \pref{e:pert-nonlin} with $v_i:=N_iu_i$ and using elementary inequalities, we have
\begin{equation}\label{e:pertn-1}
\frac{d}{dt}\ang{N_i^{-1}v_i, v_i} + 2 \ang{A_iN^{-1}_iv_i, v_i} = 2 \ang{f, v_i}\\
\lesssim \veps^{-1}\|f\|_{\theta-\theta_2}^2 + \veps\|v_i\|_{\theta+\theta_2}^2.
\end{equation}
Choosing $\veps<0$ small enough, then using \pref{e:pert-nl-ell}, by Gr\"onwall's inequality 
and \pref{e:pert-nl-nell} we have
\begin{equation}\label{e:pertn-2}
\|v_i (t)\|_{\theta_2}^2
\lesssim
e^{Ct}.
\end{equation}
Moreover, integrating \pref{e:pertn-1}, and taking into account \pref{e:pertn-2} and \pref{e:pert-nl-ell}, we infer
that
for any fixed $T>0$, $v_i \in L^\infty (0,T;V^{\theta_2})\cap L^2 (0,T;V^{\theta+\theta_2})$ with uniformly (in $i$) bounded norms.
On the other hand, we have
\begin{equation}
\|\dot{u}_i\|_{-\gamma} \leq  \|A_iu_i\|_{-\gamma} + \|B_i(u_i,u_i)\|_{-\gamma} + \|f\|_{-\gamma}.
\end{equation}
By estimating the right hand side as in the proof of Theorem \ref{t:exist}, 
we conclude that $\dot{u}_i$ is uniformly bounded in $L^2(0,T;V^{-\gamma})$,
thus $\dot{v}_i=N_i\dot{u}_i$ is uniformly bounded in the same space.
By employing Theorem \ref{t:aubin},
and passing to a subsequence,
we infer the existence of $v\in L^\infty (0,T;V^{\theta_2}) \cap L^2(0,T;V^{\theta+\theta_2})$ satisfying 
\begin{equation}
\begin{split}
&v_i \rightarrow v \mbox{ weak-star in } L^\infty(0,T;V^{\theta_2}),\\
&v_{i} \rightarrow v \mbox{ weakly in } L^2(0,T;V^{\theta+\theta_2}), \\
&v_{i} \rightarrow v \mbox{ strongly in } L^2(0,T;V^{s}) \mbox{ for any }s<\theta+\theta_2,
\end{split}
\end{equation}
as $i\to\infty$.
Define $u=N^{-1}v$ and $y_i=N^{-1}v_i$, and note that these satisfy \pref{e:pertn-conv-u}.

Now we will show that this limit $u$ indeed satisfies the equation \pref{e:pert-lin-0}.
Let $w \in C^{\infty}(0,T;\mathcal{V})$ be an arbitrary function with $w(0)=w(T)=0$.
We have
\begin{multline*}
- \int_0^T\langle u_i(t) , \dot{w}(t) \rangle dt 
+ \int_0^T \langle A_i u_i(t) , w(t) \rangle dt 
+ \int_0^T b_i( u_i(t) , u_i(t) , w(t) ) dt \\
=  \int_0^T \langle f(t) , w(t) \rangle dt.
\end{multline*}
We claim that each term in the above equation converges to the corresponding term in
\begin{multline*}
- \int_0^T\langle u(t) , \dot{w}(t) \rangle dt 
+ \int_0^T \langle A u(t) , w(t) \rangle dt 
+ \int_0^T b( u(t) , u(t) , w(t) ) dt \\
= \int_0^T \langle f(t) , w(t) \rangle dt.
\end{multline*}
For the first term, we have
\begin{equation*}
\begin{split}
u_i-u
=
N_i^{-1}Ny_i-u
=
N_i^{-1}N(y_i-u)
+
(N_i^{-1}N-I)u,
\end{split}
\end{equation*}
and taking into account that $N_i^{-1}:V^{s+2\theta_2}\to V^{s+2\theta_2-2\eps_2}$ is uniformly bounded,
and that $N_i^{-1}N:V^{\theta-\theta_2}\to V^{\theta+\theta_2-2\eps_2}$ converges to the identity map
in the strong operator topology, 
we infer $u_i\to u$ in $L^2(0,T;V^{s+2\theta_2-2\eps_2})$ for any $s<\theta-\theta_2$.
For the second term, writing
\begin{equation*}
\begin{split}
A_iu_i-Au
=
A_i(u_i-u)
+
(A_i-A)u,
\end{split}
\end{equation*}
and taking into account the uniform boundedness of $A_i$, and the weak convergence $A_i\to A$,
prove the claim.
Finally, for the third term, we have
\begin{multline*}
B_i(u_i,u_i)-B(u,u)
=
B_i(u_i,u_i-u)
+
B_i(u_i-u,u)
+
B_i(u,u)-B(u,u),
\end{multline*}
and using the uniform boundedness of $B_i$ and the convergence of $B_i$ to $B$,
we complete the proof.
\end{proof}

For example, setting $\veps=\veps_2=1$, with $\theta=1$ and $\theta_2=0$, and checking all the requirements $i)-vii)$ of Theorem~\ref{t:pert-nonlin}, the weak solutions to the NS-$\alpha$ model converge to a weak solution of the NSE as the parameter $\alpha\rightarrow 0$.
This result was previously reported in~\cite{FHT02}.

\section{Global attractors}
\label{s:attr}

In this section we establish the existence of a global attractor for the general three-parameter family of regularized models, and give general requirements for estimating its dimension.
The dimension of the global attractor gives us some measure of the level of complexity of the dynamics of a given flow.

\subsection{Existence of a global attractor}
\label{ss:attr-exist}

The following theorem establishes the existence of an absorbing ball in $V^{-\theta_2}$.
Moreover, with additional conditions, it shows not only the existence of an absorbing ball in a higher smoothness space
$V^{\beta}$, but also that any solution with initial condition in $V^{-\theta_2}$
acquires additional smoothness in an infinitesimal time, in particular implying that the absorbing ball in $V^{-\theta_2}$ is compact.

\begin{theorem}\label{t:attr-exist}
a) Let $u\in L^{\infty}_{\mathrm{loc}}(0,\infty;V^{-\theta_2}) \cap L^{2}_{\mathrm{loc}}(0,\infty;V^{\theta-\theta_2})$
be a solution to \pref{e:weak} with $u(0)\in V^{-\theta_2}$.
In addition, let the following conditions hold.
\begin{itemize}
\item[(i)]
$\ang{Av,Nv}\geq c \|v\|_{\theta-\theta_2}^2$ for any $v\in V^{\theta-\theta_2}$, with a constant $c>0$.
\item[(ii)]
$\displaystyle\sup_{t\geq0}\|f\|_{L^2(t,t+T;V^{-\theta-\theta_2})}^2\leq K$, where $T>0$ and $K\geq0$ are constants;
\end{itemize}
Then for some constant $k>0$ and for any $T'\geq0$, we have 
\begin{equation}\label{energy_f}
\|u(t)\|_{-\theta_2}^2
+ \|u\|_{L^2(t,t+T';V^{\theta-\theta_2})}^2
\lesssim
e^{-kt}\|u (0)\|_{-\theta_2}^2 + K,
\quad t\geq0,
\end{equation}
where the implicit constant may depend on $T'$.

b) In addition to the above hypotheses in a), for some $\beta\in[-\theta_2,\theta-\theta_2]$ let the following conditions be satisfied.
\begin{itemize}
\item[(iii)]
$b:V^{\beta}\times V^{\beta}\times V^{\theta-\beta}\ra\R$ is bounded;
\item[(iv)]
$\ang{Av,(I-\Delta)^{\beta} v}\geq c \|v\|_{\beta+\theta}^2$ for $v\in V^{\beta+\theta}$;
\item[(v)]
$\displaystyle\sup_{t\geq0}\|f\|_{L^2(t,t+T;V^{\beta-\theta})}^2\leq K$;
\item[(vi)]
$\dot{u}\in L^{2}_{\mathrm{loc}}(0,\infty;V^{\beta-\theta})$.
\end{itemize}
Then for any $t_0>0$
we have
\begin{equation}
\|u(t)\|_{\beta}^2
\lesssim
(e^{-k t} \|u (0)\|_{-\theta_2}^2 + K) \exp(e^{-k t} \|u (0)\|_{-\theta_2}^2 + K),
\quad t\geq t_0,
\end{equation}
where the implicit constant may depend on $t_0$.
\end{theorem}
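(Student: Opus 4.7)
The plan for part (a) is to test the weak formulation against $Nu$. Because the bilinear forms of interest satisfy the cancellation $b(u,u,Nu)=0$ (the same antisymmetry already exploited in Theorem \ref{t:exist}), the nonlinear contribution disappears and I obtain $\tfrac{d}{dt}\ang{u,Nu} + 2\ang{Au,Nu} = 2\ang{f,Nu}$. Hypothesis (i) bounds the dissipation from below by $2c\|u\|_{\theta-\theta_2}^2$, while \pref{e:coercive-n} gives $\ang{u,Nu}\gtrsim\|u\|_{-\theta_2}^2$, and on the right I use duality with Young's inequality to split $2\ang{f,Nu}$ into $\eps^{-1}\|f\|_{-\theta-\theta_2}^2$ plus a small multiple of $\|u\|_{\theta-\theta_2}^2$ that is absorbed by the dissipation. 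Together with the embedding $V^{\theta-\theta_2}\hookrightarrow V^{-\theta_2}$ (trivial if $\theta=0$, a spectral-gap argument for $\Lambda$ otherwise), this yields a differential inequality of the form $\tfrac{d}{dt}\|u\|_{-\theta_2}^2 + k\|u\|_{-\theta_2}^2 \lesssim \|f\|_{-\theta-\theta_2}^2$, to which I apply the Gronwall-type lemma from Appendix \ref{ss:gronwall} combined with the uniform translation bound on $\|f\|_{-\theta-\theta_2}^2$ from (ii); this delivers the pointwise part of \pref{energy_f}. The $L^2$ part then follows by reintegrating the differential inequality on $[t,t+T']$ and inserting the pointwise bound.

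For part (b) the approach is the same in spirit but now in $V^\beta$: test the equation against $(I-\Delta)^\beta u$, which is admissible by (vi); use coercivity (iv) to bound the dissipation below by $c\|u\|_{\beta+\theta}^2$; and estimate the nonlinear term through (iii) in the form $|b(u,u,(I-\Delta)^\beta u)|\lesssim\|u\|_\beta^2\|u\|_{\beta+\theta}$, then apply Young to absorb a small multiple of $\|u\|_{\beta+\theta}^2$ into the dissipation. Treating the forcing analogously yields a differential inequality of the schematic form
\[
\tfrac{d}{dt}\|u\|_\beta^2 + c\|u\|_{\beta+\theta}^2 \;\lesssim\; \|f\|_{\beta-\theta}^2 + \varphi(t)\,\|u\|_\beta^2,
\]
where $\varphi(t)$ is a power of $\|u(t)\|_{\theta-\theta_2}^2$ obtained by interpolating $\|u\|_\beta$ between $V^{-\theta_2}$ and $V^{\theta-\theta_2}$ (allowed since $\beta\in[-\theta_2,\theta-\theta_2]$) and using the $L^\infty$ control of $\|u\|_{-\theta_2}$ from part (a); in particular $\int_t^{t+T'}\varphi$ is uniformly bounded in $t$ by part (a).

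The main obstacle is upgrading this to a bound valid for every $t\geq t_0>0$ when the data is only in $V^{-\theta_2}$, since a naive Gronwall on the inequality above would require $u(0)\in V^\beta$. The plan is the classical infinitesimal-smoothing device: part (a) gives $u\in L^2_{\mathrm{loc}}(0,\infty;V^{\theta-\theta_2})$, and since $\beta\leq\theta-\theta_2$, for any prescribed $t_0>0$ I can pick some $s\in(0,t_0)$ with $\|u(s)\|_\beta<\infty$; the uniform Gronwall lemma from Appendix \ref{ss:gronwall}, applied to the differential inequality over the window $[s,t]$, then produces the doubly-exponential bound in the statement, with the integrals $\int_s^{s+T'}\varphi$ and $\int_s^{s+T'}\|f\|_{\beta-\theta}^2$ both controlled uniformly in $s$ via part (a) and hypothesis (v), and the dependence on the choice $s\in(0,t_0)$ folded into the $t_0$-dependent implicit constant.
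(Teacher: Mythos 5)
Your part (a) is essentially the paper's proof verbatim: pair with $Nu$, kill the nonlinearity by antisymmetry, use (i) and Young to absorb the forcing, pass to $\frac{d}{dt}\|u\|_{-\theta_2}^2+k\|u\|_{-\theta_2}^2\lesssim\|f\|_{-\theta-\theta_2}^2$ via the embedding $V^{\theta-\theta_2}\hookrightarrow V^{-\theta_2}$, apply Gr\"onwall with the translation-uniform bound (ii), and reintegrate for the $L^2$ part. No issues there.

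Part (b) contains a genuine gap in the step you yourself flag as ``the main obstacle.'' Your device --- ``pick some $s\in(0,t_0)$ with $\|u(s)\|_\beta<\infty$ and fold the dependence on the choice of $s$ into the $t_0$-dependent implicit constant'' --- does not produce the stated estimate. Running Gr\"onwall from such an $s$ yields a constant depending on $\|u(s)\|_\beta$, a quantity that is finite for a.e.\ $s$ but is not controlled by $\|u(0)\|_{-\theta_2}$, $K$, and $t_0$ alone; you would obtain only the qualitative statement $u(t)\in V^\beta$ for $t>0$, not the quantitative absorbing-ball bound $\|u(t)\|_\beta^2\lesssim(e^{-kt}\|u(0)\|_{-\theta_2}^2+K)\exp(e^{-kt}\|u(0)\|_{-\theta_2}^2+K)$, which is precisely what Corollary \ref{c:attr-exist} needs. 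The paper's fix is to write the Gr\"onwall'd (variation-of-constants) form of the inequality for an arbitrary starting time $s$ and then \emph{integrate over} $s\in[t-t_0,t]$: this replaces the pointwise value $\|u(s)\|_\beta^2$ by the average $\frac{1}{t_0}\int_{t-t_0}^{t}\|u(s)\|_\beta^2\,ds$, which \emph{is} quantitatively controlled by the part (a) estimate \pref{energy_f} because $\beta\le\theta-\theta_2$ forces $\|u\|_\beta\lesssim\|u\|_{\theta-\theta_2}$. This is exactly the uniform Gr\"onwall mechanism, and its third hypothesis --- a uniform bound on $\int_{t-t_0}^{t}\|u\|_\beta^2$ --- is the ingredient your write-up omits: you list uniform control of $\int\varphi$ and $\int\|f\|_{\beta-\theta}^2$ but never of the windowed integral of the quantity being estimated. (A secondary point: Appendix \ref{ss:gronwall} contains only Lemma \ref{lemma:gronwall_2}, an asymptotic decay lemma used for the determining-operator results; the uniform Gr\"onwall lemma is not stated there, so you would either have to carry out the averaging by hand, as the paper does, or supply the lemma.) Aside from this, your differential inequality in $V^\beta$, obtained from (iii), (iv) and interpolation using $\beta\in[-\theta_2,\theta-\theta_2]$, matches the paper's \pref{e:attr-5}.
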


\begin{remark}
The trilinear forms $b_1$ and $b_3$ satisfy condition (iii) provided
$2\theta > \frac{n+2}{2}-2\theta_1-\theta_2$,
$2\theta+2\theta_1 \geq 1-k$,
$2\theta_2+\theta\geq 1$ and
$2\theta_1+\theta \geq k$, for some $k\in\{0,1\}$.

The trilinear form $b_2$ satisfies condition (iii) provided
$2\theta > \frac{n+2}{2}-2\theta_1-\theta_2$,
$2\theta+2\theta_1 \geq 1$,
$2\theta_2+\theta\geq 0$ and
$2\theta_1+\theta \geq 1$.

The trilinear forms $b_4$ and $b_5$ satisfy condition (iii) provided
$2\theta > \frac{n+2}{2}-2\theta_1-\theta_2$,
$2\theta+2\theta_1 \geq 1$,
$2\theta_2+\theta\geq 1$ and
$2\theta_1+\theta \geq 1$.
\end{remark}

\begin{remark}
All the special cases listed in Table \ref{t:spec} (except NSE) satisfy condition (iii).
In the case of NS-$\alpha$-like model, condition (iii) is satisfied provided  $\theta \geq 1$ and $\ttwo > \frac{n+2}{2}-2$.
\end{remark}

\begin{proof}[Proof of Theorem \ref{t:attr-exist}]
We have
\begin{equation*}
\frac{d}{dt}\langle u, N u \rangle + 2 \langle Au, N u\rangle 
= 2 \langle f, N u \rangle 
\leq \eps^{-1} \|f\|_{-\theta-\theta_2}^2 + \eps \|N\|_{-\theta_2;\theta_2}^2 \|u\|_{\theta-\theta_2}^2,
\end{equation*}
for any $\eps>0$.
By using (i) and by choosing $\eps>0$ sufficiently small,
we get
\begin{equation}\label{e:attr-1}
\frac{d}{dt}\|u\|_{-\theta_2}^2 + c \|u\|_{\theta-\theta_2}^2 
\lesssim
\|f\|_{-\theta-\theta_2}^2.
\end{equation}
and since $V^{\theta-\theta_2}\hookrightarrow V^{-\theta_2}$ we have
\begin{equation}\label{e:attr-2}
\frac{d}{dt}\|u\|_{-\theta_2}^2 + k \|u\|_{-\theta_2}^2 
\lesssim
\|f\|_{-\theta-\theta_2}^2,
\end{equation}
with some constant $k>0$.
This gives
\begin{equation}\label{e:attr-3}
\begin{split}
\|u (t)\|_{-\theta_2}^2
& \lesssim
e^{-k t} \|u (0)\|_{-\theta_2}^2
 + 
\int_{0}^t 
e^{-k(t-\tau)} \|f(\tau)\|_{-\theta-\theta_2}^2 d\tau
\\
& \lesssim
e^{-k t} \|u (0)\|_{-\theta_2}^2
 + 
K,
\end{split}
\end{equation}
and by integrating \pref{e:attr-1} and using \pref{e:attr-3}, we have
\begin{equation}\label{e:attr-4}
\int_t^{t+T'} \|u\|_{\theta-\theta_2}^2
\lesssim
\|u(t)\|_{-\theta_2}^2 + 
\int_t^{t+T'} \|f\|_{-\theta-\theta_2}^2
\lesssim
e^{-k t} \|u (0)\|_{-\theta_2}^2
 + K,
\end{equation}
proving a).

Now we shall prove b).
As in the proof of Theorem~\ref{t:reg}, 
we get $u\in L^2(0,T;V^{\beta+\theta})$.
Taking $w=\Lambda^{2\beta}u$ in \pref{e:weak},
and using (iv) and the boundedness of $b$,
we have
\begin{equation}\label{e:attr-5}
\frac{d}{dt}\|u\|_{\beta}^2 
+ k' \|u\|_{\beta}^2
\lesssim
\|f\|_{\beta-\theta}^2 
+ \|u\|_{\theta-\theta_2}^2 \|u\|_{\beta}^2,
\end{equation}
with some constant $k'>0$, implying
\begin{equation}\label{e:attr-6}
\begin{split}
\|u (t)\|_{\beta}^2
\lesssim~
& e^{-k(t-s)} \exp(\|u\|_{L^2(s,t;V^{\theta-\theta_2})}^2) \|u (s)\|_{\beta}^2 \\
& + \int_{s}^t e^{-k(t-\tau)} \exp(\|u\|_{L^2(\tau,t;V^{\theta-\theta_2})}^2) \|f(\tau)\|_{\beta-\theta}^2  d\tau.
\end{split}
\end{equation}
Integrating this over $s\in [t-t_0,t]$ we have
\begin{equation*}
\begin{split}
t_0 \|u (t)\|_{\beta}^2
\lesssim~
& \exp(\|u\|_{L^2(t-t_0,t;V^{\theta-\theta_2})}^2) \int_{t-t_0}^{t} \|u (s)\|_{\beta}^2 ds \\
&+ t_0 \exp(\|u\|_{L^2(t-t_0,t;V^{\theta-\theta_2})}^2) \int_{t-t_0}^t \|f(\tau)\|_{\beta-\theta}^2  d\tau\\
\lesssim~
& (e^{-k t} \|u (0)\|_{-\theta_2}^2 + K + K) \exp(e^{-k t} \|u (0)\|_{-\theta_2}^2 + K),
\end{split}
\end{equation*}
where we have used \pref{e:attr-4} and (v).
This completes the proof.
\end{proof}
For example, in the case of ML-$\alpha$ model, conditions $i)-vi)$ of Theorem \ref{t:attr-exist} are satisfied with $\beta=0$.

In this next corollary, we combine the results in Theorem~\ref{t:exist}, Theorem~\ref{t:stab} and Theorem~\ref{t:attr-exist} to show the existence of a global attractor.
\begin{corollary}\label{c:attr-exist}
Let the following conditions hold.
\begin{itemize}
\item[i)] 
$b:V^{\sigma_1}\times V^{\theta-\theta_2}\times V^{\sigma_2}\ra\R$ is bounded for some 
$\sigma_1\leq\theta-\theta_2$ and $\sigma_2\leq\theta+\theta_2$ with $\sigma_1+\sigma_2\leq\theta$;
\item[ii)] 
$b(w,v,Nv)=0$ for any $v,w\in V^{\theta-\theta_2}$;
\item[iii)] 
$b:V^{\bar\sigma_1}\times V^{\bar\sigma_2}\times V^{\bar\gamma}\ra\R$ is bounded for some 
$\bar\sigma_i<\theta-\theta_2$, $i=1,2$, and $\bar\gamma\in\R$.
\end{itemize}
In addition, assume that the hypotheses (i) and (iii-v) of Theorem \ref{t:attr-exist} are satisfied.
Then, there exists a compact attractor $\mathcal{A}\Subset V^{-\theta_2}$ 
for the equation \pref{e:weak}
which attracts the bounded sets of $V^{-\theta_2}$.
Moreover, $\mathcal{A}$ is connected and it is the maximal bounded attractor in $V^{-\theta_2}$.
\end{corollary}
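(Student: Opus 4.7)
The plan is to assemble the corollary from three ingredients: (a) well-posedness to define a continuous semigroup on $V^{-\theta_2}$, (b) a compact absorbing set, and (c) the classical construction of the global attractor from these data.

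First, I would observe that conditions i)--iii) of the corollary are precisely the hypotheses needed to invoke Theorem~\ref{t:exist}.a) for global existence and Theorem~\ref{t:stab}.a) for uniqueness with Lipschitz dependence in the $V^{-\theta_2}$-norm. Hence for each $u_0\in V^{-\theta_2}$ there is a unique solution $u\in L^\infty_{\mathrm{loc}}(0,\infty;V^{-\theta_2})\cap L^2_{\mathrm{loc}}(0,\infty;V^{\theta-\theta_2})$ of \pref{e:weak}, and the prescription $S(t)u_0=u(t)$ defines a semigroup of continuous maps $S(t):V^{-\theta_2}\to V^{-\theta_2}$, the continuity being uniform on bounded time intervals by Theorem~\ref{t:stab}.a).

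Second, I would use Theorem~\ref{t:attr-exist} to exhibit a compact absorbing set. Part a) of that theorem, whose hypothesis (i) is assumed, produces via \eqref{energy_f} a bounded absorbing set $B_0\subset V^{-\theta_2}$: any bounded set is mapped by $S(t)$ into the ball of radius $c\sqrt{K}$ for all sufficiently large $t$. Part b), whose hypotheses (iii)--(v) are assumed, is then applied to trajectories already residing in $B_0$; condition (ii) of Theorem~\ref{t:attr-exist} is implied by (v) since $\beta\geq-\theta_2$, while (vi) follows from the equation $\dot{u}=f-Au-B(u,u)$ once one knows $u\in L^2_{\mathrm{loc}}(V^{\beta+\theta})$, using the duality form of (iii) to bound $B(u,u)$ in $V^{\beta-\theta}$. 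The resulting estimate produces, for any fixed $t_0>0$, a ball $B_1\subset V^\beta$ that absorbs $B_0$. Since $\beta>-\theta_2$ and $\Omega$ is compact, the embedding $V^\beta\hookrightarrow V^{-\theta_2}$ is compact, so $B_1$ is relatively compact in $V^{-\theta_2}$; enlarging and re-absorbing once more yields a compact absorbing set for $S(t)$.

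Third, having a continuous semigroup on a Banach space together with a compact absorbing set, I would invoke the classical global attractor theorem (e.g.\ Theorem I.1.1 in~\cite{Tema88}) to define
\begin{equation*}
\mathcal{A}=\bigcap_{s\geq0}\overline{\bigcup_{t\geq s}S(t)B_1},
\end{equation*}
which is then compact, fully invariant, maximal among bounded invariant sets, and attracts every bounded subset of $V^{-\theta_2}$; connectedness follows because $V^{-\theta_2}$ is a connected Banach space, each $S(t)$ is continuous, and hence the nested family $\overline{\bigcup_{t\geq s}S(t)B_1}$ consists of connected sets. The step I expect to require the most care is the verification of hypothesis (vi) of Theorem~\ref{t:attr-exist}.b) in the present abstract setting, i.e.\ extracting $\dot{u}\in L^2_{\mathrm{loc}}(V^{\beta-\theta})$ from the equation by tracking the three Sobolev exponents in the trilinear bound (iii); the rest of the argument is essentially a routine assembly of the preceding theorems.
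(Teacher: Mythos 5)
Your proposal is correct and follows essentially the same route as the paper's proof: existence (Theorem~\ref{t:exist}) plus uniqueness and continuous dependence (Theorem~\ref{t:stab}) yield the continuous semigroup $S(t)$ on $V^{-\theta_2}$, Theorem~\ref{t:attr-exist} supplies an absorbing ball in a more regular space that is relatively compact in $V^{-\theta_2}$, and Theorem~I.1.1 of~\cite{Tema88} then gives the compact, connected, maximal attractor. Your additional care in checking that hypotheses (ii) and (vi) of Theorem~\ref{t:attr-exist} follow from the assumed ones is a useful refinement of a point the paper passes over silently, but it does not alter the structure of the argument.
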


\begin{proof}
We recall that by Theorem \ref{t:exist}, there exists a solution $u\in L^{\infty}_{\mathrm{loc}}(0,\infty;V^{-\theta_2}) \cap L^{2}_{\mathrm{loc}}(0,\infty;V^{\theta-\theta_2})$ to \pref{e:weak} with any given initial data $u(0)\in V^{-\theta_2}$.
By Theorem \ref{t:stab} this solution is unique and depends continuously on the initial data,
so we have a continuous semigroup $S(t):V^{-\theta_2}\to V^{-\theta_2}$, $t\geq0$.
Now, by Theorem \ref{t:attr-exist} there is a ball $B$ in $V^{\theta-\theta_2}$
which is absorbing in $V^{-\theta_2}$, meaning that for any bounded set $U\subset V^{-\theta_2}$
there exists $t_1$ such that $S(t)U\subset B$ for all $t\geq t_1$.
Therefore for any bounded set $U\subset V^{-\theta_2}$
there exists $t_0$ such that $\cup_{t\geq t_0}S(t)U$ is relatively compact in $V^{-\theta_2}$.
Finally, applying~\cite[Theorem I.1.1.]{Tema88} we have that the set
$\mathcal{A}=\cap_{s\geq0}\overline{\cup_{t\geq s}S(t)B}$ is a compact attractor for $S$, and the rest of the result is immediate.
\end{proof}

All the special cases listed in Table \ref{t:spec} (except NSE) satisfy the conditions of Corollary~\ref{c:attr-exist}.
Again, in the case of NS-$\alpha$-like model, the conditions of the corollary are satisfied provided  $\theta \geq 1$ and $\ttwo > \frac{n+2}{2}-2$.

\subsection{Estimates on the dimension of the global attractor}
\label{ss:attr-dim}

Next we give a result which can be used to develop estimates on the 
dimension of the global attractor.
To obtain bounds on the dimension of global attractors, we require conditions that will guarantee that any $m$-dimensional volume element in the phase space shrinks as the flow evolves.
The general notion is that if this is the case, then the attractor can have no $m$-dimensional subsets and hence its dimension must be less than or equal to  $m$.
If one can find such an $m< \infty$, then we say that the asymptotic dynamics is determined by finite number of degrees of freedom.

\begin{theorem}
\label{thm:global-attractor}
Let $\theta>0$.
Let the equation \pref{e:weak} admit the semigroup $S(t):V^{-\theta_2}\to V^{-\theta_2}$, ${t\geq0}$,
and let $X\subset V^{\theta-\theta_2}$ be a bounded set such that $S(t)X=X$ for $t\geq0$.
Let the following conditions hold:
\begin{itemize}
\item[i)] 
$b:V^{\sigma_1}\times V^{\theta-\theta_2}\times V^{\sigma_2}\ra\R$ is bounded for some 
$\sigma_1\leq\theta-\theta_2$ and $\sigma_2\leq\theta+\theta_2$ with $\sigma_1+\sigma_2<2\theta$;
\item[ii)] 
$b(w,v,Nv)=0$ for any $v,w\in V^{\theta-\theta_2}$;
\item[iii)] 
For some $\beta\geq-\theta_2$ and $p\in[1,\infty]$,
\begin{equation}
\displaystyle\epsilon:=\sup_{u_0\in X}\limsup_{t\to\infty}\frac1t\int_{0}^{t}\|S(\tau)u_0\|_{\beta}^pd\tau<\infty;
\end{equation}
\item[iv)] 
For some $m\in\N$, $\alpha\in[0,1)$, $q\in[1,p]$, and $C>0$,
and for any collection $\{\phi_i\in V^{\theta-\theta_2}\}_{i=1}^{m}$ satisfying $\ang{\phi_i,N\phi_k}=\delta_{ik}$, $i,k=1,\ldots,m$, 
\begin{equation}
\sum_{i=1}^{m}b(\phi_i,S(t)u_0,N\phi_i)\leq\alpha\sum_{i=1}^{m}\ang{A\phi_i,N\phi_i} + C\|S(t)u_0\|_{\beta}^q,
\qquad
u_0\in X,\, t\geq0;
\end{equation}
\item[v)] 
For any collection $\{\phi_i\}$ as above,
\begin{equation}
(1-\alpha)\sum_{i=1}^{m}\ang{A\phi_i,N\phi_i}> C \epsilon^{\,q/p},
\end{equation}
\end{itemize}

Then we have $d_H(X)\leq
m$.
\end{theorem}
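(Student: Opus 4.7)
The plan is to follow the classical volume-contraction method of Constantin--Foias--Temam, as packaged in Temam's 1988 monograph, adapted to the $N$-weighted inner product $(v,w)_N:=\ang{v,Nw}$ on $V^{-\theta_2}$ (which is an inner product by the coercivity \pref{e:coercive-n} of $N$). First I would linearize the semigroup along a trajectory $u(t)=S(t)u_0$ with $u_0\in X$, obtaining the variational equation $\dot v + Av + B(v,u)+B(u,v)=0$, i.e., $\dot v=F'(u)v$, and verify (using the well-posedness and regularity from Theorems~\ref{t:exist}--\ref{t:reg}) that the flow is uniformly quasi-differentiable on $X$ in the sense required to invoke the abstract dimension bound \cite[Ch.~V]{Tema88}.

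Next, for an $m$-tuple of infinitesimal displacements $v_1(t),\ldots,v_m(t)$ satisfying the variational equation, I would use the standard identity governing the $m$-volume $|v_1\wedge\cdots\wedge v_m|_N$ in the $N$-inner product:
\begin{equation*}
\tfrac{d}{dt}\log|v_1\wedge\cdots\wedge v_m|_N = \operatorname{Tr}\bigl(F'(u(t))\circ Q_m(t)\bigr),
\end{equation*}
where $Q_m(t)$ is the $(\cdot,\cdot)_N$-orthogonal projector onto $\operatorname{span}\{v_1(t),\ldots,v_m(t)\}$. Choosing a $(\cdot,\cdot)_N$-orthonormal basis $\{\phi_i(t)\}_{i=1}^{m}$ of this subspace (i.e., $\ang{\phi_i,N\phi_k}=\delta_{ik}$), the trace expands as
\begin{equation*}
\operatorname{Tr}\bigl(F'(u)\circ Q_m\bigr)
=
-\sum_{i=1}^{m}\bigl[\ang{A\phi_i,N\phi_i}+b(\phi_i,u,N\phi_i)+b(u,\phi_i,N\phi_i)\bigr].
\end{equation*}
Hypothesis (ii), applied with $w=u$ and $v=\phi_i$, annihilates the last term, and hypothesis (iv) gives
\begin{equation*}
\operatorname{Tr}\bigl(F'(u(t))\circ Q_m(t)\bigr)
\leq
-(1-\alpha)\sum_{i=1}^{m}\ang{A\phi_i,N\phi_i}+C\,\|S(t)u_0\|_{\beta}^{q}.
\end{equation*}

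Then I would take the long-time average: using Jensen/H\"older ($q\leq p$) together with hypothesis (iii),
\begin{equation*}
\limsup_{t\to\infty}\frac{1}{t}\int_0^t\|S(\tau)u_0\|_\beta^{q}d\tau
\leq
\Bigl(\limsup_{t\to\infty}\frac{1}{t}\int_0^t\|S(\tau)u_0\|_\beta^{p}d\tau\Bigr)^{q/p}
\leq \epsilon^{\,q/p}.
\end{equation*}
Taking the supremum over $u_0\in X$ and over admissible $m$-frames, the uniform Lyapunov-type number
\begin{equation*}
q_m := \limsup_{t\to\infty}\sup_{u_0\in X}\sup_{\{v_i(0)\}}\frac{1}{t}\int_0^t\operatorname{Tr}\bigl(F'(u(\tau))\circ Q_m(\tau)\bigr)d\tau
\end{equation*}
is bounded above by $-(1-\alpha)\inf_{\{\phi_i\}}\sum_i\ang{A\phi_i,N\phi_i}+C\epsilon^{q/p}$, which is strictly negative by hypothesis (v). The abstract theorem of \cite[Ch.~V]{Tema88} then yields $d_H(X)\leq m$.

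The main obstacle I anticipate is the justification of the linearization machinery in this abstract three-parameter setting: namely, verifying that $S(t)$ is uniformly (quasi)differentiable on $X$ with the variational equation as its derivative, that the cocycle $L(t;u_0)$ has well-defined exterior powers on the $N$-weighted Hilbert space, and that the trace identity and the Constantin--Foias--Temam dimension bound apply. The boundedness condition (i) (with $\sigma_1+\sigma_2<2\theta$) is precisely what should make the nonlinear term a strictly subordinate perturbation of $A$ in this linearized analysis and close the necessary energy estimates for differences $S(t)u_0-S(t)u_0'$; the remaining steps above are then computational and follow the classical template.
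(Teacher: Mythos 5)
Your proposal follows essentially the same route as the paper's proof: linearize along the trajectory, expand the trace of the linearized operator against an $N$-orthonormal frame, use hypothesis (ii) to kill $b(u,\phi_i,N\phi_i)$ and (iv) to bound $b(\phi_i,u,N\phi_i)$, average in time with H\"older and (iii) to produce $\epsilon^{q/p}$, and invoke (v) together with the Constantin--Foias--Temam dimension bound from \cite[Theorem V.3.3]{Tema88}. The paper likewise states the uniform boundedness and quasi-differentiability of the linearized cocycle without detailed verification, so your anticipated ``main obstacle'' is treated at the same level of rigor there.
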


\begin{proof}
Given $u_0\in X$,
the linearization of \pref{e:weak} around the solution $u(t)=S(t)u_0$, $t\geq0$, is
\begin{equation*}
\dot U + AU + B(u,U) + B(U,u) = 0,\qquad U(0)=U_0\in V^{-\theta_2}.
\end{equation*}
There exists a unique solution to the above equation,
and we will denote $U(t)=L(t,u_0)U_0$.
One can show that for any fixed $t\geq0$, $L(t,\cdot):V^{-\theta_2}\to V^{-\theta_2}$ is uniformly bounded on $X$, i.e.,
\begin{equation*}
\sup_{u_0\in X}\|L(t,u_0)\|_{-\theta_2;-\theta_2}<\infty.
\end{equation*}
Moreover one can prove that for any fixed $t\geq0$,
\begin{equation*}
\sup_{(\xi,\eta)\in D_\eps}\frac{\|S(t)\eta-S(t)\xi-L(t,\xi)(\eta-\xi)\|_{-\theta_2}}{\|\eta-\xi\|_{-\theta_2}}\to0
\quad\textrm{as }\eps\to0,
\end{equation*}
where $D_\eps=\{(\xi,\eta):\xi,\eta\in X,\,\|\xi-\eta\|_{-\theta_2}\leq\eps\}$.
Introducing the notation
\begin{equation*}
T(t,u_0)U= AU + B(u,U) + B(U,u),
\end{equation*}
where $u(t)=S(t)u_0$ is understood,
we have
\begin{equation*}
\begin{split}
\sum_{i=1}^{m}\ang{T(t,u_0)\phi_i,N\phi_i}
&=
\sum_{i=1}^{m}\ang{A\phi_i,N\phi_i}
+ \sum_{i=1}^{m}b(\phi_i,u,N\phi_i)
\\
&\geq
(1-\alpha) \sum_{i=1}^{m}\ang{A\phi_i,N\phi_i}
-C\|u\|_{\beta}^q,
\end{split}
\end{equation*}
implying that
\begin{equation*}
\begin{split}
\inf_{u_0\in X}\liminf_{t\to\infty}\frac1t\sum_{i=1}^{m}\ang{T(t,u_0)\phi_i,N\phi_i}
\geq~
&(1-\alpha) \sum_{i=1}^{m}\ang{A\phi_i,N\phi_i}
\\
&-C\sup_{u_0\in X}\left(\limsup_{t\to\infty}\frac1t\|u\|_{\beta}^p\right)^{\frac qp}
\\
>~ &
0.
\end{split}
\end{equation*}
Now we apply~\cite[Theorem V.3.3]{Tema88} (see also pp.\ 291 therein) to complete the proof.

\end{proof}

\begin{remark}
Theorem~\ref{thm:global-attractor} can be used to recover estimates on the dimension of the global attractor for the generalized model, through the application of techniques previously used in the literature for the special cases listed in Table~\ref{t:spec}; this is a somewhat long calculation that we do not include here.
\end{remark}

\section{Determining operators}
\label{s:determining}

The notion of {\em determining modes} for the Navier-Stokes and MHD equations 
was first introduced in~\cite{FoPr67} as an attempt to identify and estimate 
the number of degrees of freedom in turbulent flows (cf.~\cite{CFMT85} for a
thorough discussion of the role of determining sets in turbulence theory).
This concept later led to the notion of 
{\em Inertial Manifolds}~\cite{FST88}.
An estimate of the number of determining modes was 
given in~\cite{FMTT83,JoTi93};
the concepts of {\em determining nodes} and {\em determining volumes} 
were introduced and estimated in~\cite{FoTe83,FoTe84,FoTi91,JoTi91,JoTi93}.
See also~\cite{JoTi92,CJT95a,CJT97}.
In~\cite{HoTi96b,HoTi96a}, a more general concept known as a
{\em determining operator} was introduced, and the special
case of {\em determining functionals} was explicitly given.

Following~\cite{HoTi96b,HoTi96a,Hols95b},
we now define more precisely the concepts of
{\em determining operators} and {\em determining functionals}
for weak solutions of \pref{e:weak}.
In the following two definitions, we consider
an operator $R_m : V^{\theta-\theta_2} \to H_m$,
where $H_m\subset H^{\alpha}$ is a finite dimensional subspace with some $\alpha\leq-\theta_2$.

\begin{definition}
   \label{d:asympt-det-op}
Let $f, g \in L^2(0,\infty;V^{-\theta-\theta_2})$ be any two forcing functions satisfying
\begin{equation}
    \label{e:forcing}
\lim_{t \rightarrow \infty} \| f(t) - g(t) \|_{-\theta-\theta_2} = 0,
\end{equation}
and let $u, v \in L^2(0,\infty;V^{\theta-\theta_2})$ be corresponding solutions
to~\pref{e:weak}.
Then $R_m$ is called an {\em asymptotic determining operator} for weak solutions of \pref{e:weak} if
\begin{equation}
  \label{e:project}
\lim_{t \rightarrow \infty} 
    \| R_m [ u(t) - v(t) ] \|_{-\theta_2} = 0,
\end{equation}
implies that
\begin{equation}
    \label{e:determine}
\lim_{t \rightarrow \infty} \| u(t) - v(t) \|_{-\theta_2} = 0.
\end{equation}
\end{definition}

\begin{definition}
   \label{d:set-det-op}
With $K\subset V^{\theta-\theta_2}$,
let $u(t), v(t) \in K$, $t\in\R$, be solutions to~\pref{e:weak}.
Then $R_m$ is called a {\em determining operator on the set $K$} for weak solutions of \pref{e:weak} if
\begin{equation}
  \label{e:project-set}
    R_m u(t) = R_m v(t) , \qquad t\in\R,
\end{equation}
implies that $u=v$.
\end{definition}

Given a basis $\{ \phi_i \}_{i=1}^m$ for the finite-dimensional space $H_m$, 
and a set of bounded linear functionals $\{ l_i \}_{i=1}^m\subset V^{\theta_2-\theta}$, 
we can construct the operator
\begin{equation}
  \label{eqn:projectOper}
R_m u = \sum_{i=1}^m l_i(u) \phi_i.
\end{equation}
The assumption \pref{e:project} is then implied by:
\begin{equation}
  \label{e:linfunc}
\lim_{t \rightarrow \infty} 
    | l_i( u(t) - v(t)) | = 0, \ \ \ \ \ i=1, \ldots, m
\end{equation}
so that we can ask equivalently whether the set $\{l_i\}_{i=1}^m$ 
forms a set of {\em determining functionals}.
The analysis of whether $R_m$ or $\{l_i\}_{i=1}^m$ are determining 
can be reduced to an analysis of the approximation properties of $R_m$.
Note that in this construction,
the basis $\{\phi_i\}_{i=1}^m$ need not span a subspace of the solution 
space $V^{-\theta_2}$ or even of $H^{-\theta_2}$, so that the functions $\phi_i$ need not be divergence-free
or be in $H^{-\theta_2}$, for example.
Note that Definitions~\ref{d:asympt-det-op} and~\ref{d:set-det-op}
encompasses each of the notions of determining modes, nodes, volumes,
and functionals, by making particular choices for the sets of functions 
$\{\phi_i\}_{i=1}^m$ and $\{l_i\}_{i=1}^m$.

Here we extend the results of~\cite{HoTi96b,HoTi96a,Hols95b} 
to the generalized Navier-Stokes model \pref{e:op}.
In particular, we will show that if $\{H_m \subset H^{\alpha}:m\in\N\}$ is
a family of finite dimensional subspaces, and
if a family of operators 
$R_m : V^{\theta-\theta_2} \to H_m$, $m\in\N$,
satisfies an approximation inequality of the form
\begin{equation}
  \label{e:approximation}
\| u - R_m u \|_{\alpha} \le \xi(m) \| u \|_{\theta-\theta_2},
\end{equation}
for a function $\xi:(0,\infty)\to(0,\infty)$ with $\lim_{m\to\infty}\xi(m)=0$,
then the operator $R_m$ is a determining operator in the sense of 
Definitions~\ref{d:asympt-det-op} and~\ref{d:set-det-op}, provided $m$ is large enough.

If
$H_m$ both contains 
all polynomials of degree less than $\lceil\theta-\theta_2\rceil$, and is spanned by compactly supported 
functions such that the diameter of the supports is uniformly proportional to $m^{-1/n}$,
we typically have $\xi(m)\sim m^{-({\theta-\ttwo-\alpha})/{n}}$,
provided that $R_m$ realizes a near-best approximation of any $u\in V^{\theta-\theta_2}$ from the subspaces $H_m$ in the $H^\alpha$-norm.
In particular, standard finite element and wavelet subspaces of sufficiently high polynomial degree satisfy these conditions.
Then $R_m$ may be chosen to be interpolation or quasi-interpolation operators, cf.\cite{Hols95b,HoTi96b}.
For example, the piecewise constants with local averaging and piecewise linears
with e.g.\ the Scott-Zhang quasi-interpolators as in~\cite{Hols95b,HoTi96b} correspond to the determining volumes and determining nodes, respectively.

Determining modes can be understood as follows.
Let $\Lambda:V^{\theta-\theta_2}\to V^{\alpha}$ be a boundedly invertible operator
with the eigenvalues $0<\lambda_1\leq\lambda_2\leq\ldots$,
such that $\lambda_j\to\infty$ as $j\to\infty$.
Let $\{w_j\}\subset V^{\theta-\theta_2}$ be the corresponding set of eigenfunctions, orthonormal in $V^\alpha$.
Then, with $H_m:=\mathrm{span}\{w_1,\ldots,w_m\}$
and $R_m$ the $V^\alpha$-orthogonal projector onto $H_m$,
it is easy to see that $\|u-R_mu\|_\alpha\lesssim \lambda_m^{-1}\|u\|_{\theta-\theta_2}$ for any $u\in V^{\theta-\theta_2}$,
meaning that $\xi(m)\sim\lambda_m^{-1}$ in this case.
In particular, when $\Lambda$ is a power of the Stokes operator, i.e,
when $\Lambda=(-P\Delta)^{(\theta-\theta_2-\alpha)/2}$, we have $\lambda_m\sim m^{(\theta-\theta_2-\alpha)/n}$, and so $\xi(m)\sim m^{-({\theta-\ttwo-\alpha})/{n}}$, which coincides with the behavior of $\xi$ for the case in the previous paragraph.

Bounds on the number of determining degrees of freedom are usually 
phrased in terms of a generalized {\em Grashof number}, which can
be defined in the current context as
\begin{equation}\label{e:grash}
G = \limsup_{t \rightarrow \infty} \| f(t) \|_{V^{-\theta-\theta_2}}.
\end{equation}
The definition in \pref{e:grash} generalizes the definition of Grashof number in the literature.
For example, if the forcing term in NSE is given by $\tilde{f}$ with dimensions  mass $\times$ length $\times$ time$^{-2}$, then the nondimensional forcing $f$ that appears in \pref{e:op} can be defined in terms of $\tilde{f}$ by
\begin{equation}\label{e:f}
f = \frac{L^2}{\rho\nu^2}\tilde{f},
\end{equation}  
where $\rho$ is the density of dimensions mass per unit $n$-volume, $\nu$ is the kinematic viscosity with dimensions length$^2\times$ time$^{-1}$ and $L$ is the system size.
In this case one can see that given a time independent forcing for NSE, the Grashof number is defined as 
\begin{equation}\label{e:Gnse}
G=\frac{L^{2-n/2}}{\rho\nu^2}\|\tilde{f}\|_{-1}.
\end{equation}
It is known that if $G$ is small enough, then the NSE possess a unique, globally stable, steady state solution~\cite{Tema77}.
As the Grashof number increases, the steady state goes through a sequence of bifurcations leading to a more complex dynamics of the flow.
Hence, it is natural to use the Grashof number $G$ to estimate the number of degrees of freedom of the solutions of the NSE as well as other turbulence models.

\subsection{Dissipative systems}
\label{ss:dis}

In this subsection, we consider equations with $\theta>0$.
Note that Theorem \ref{t:attr-exist} provides with examples where the conditions iii) of the following theorem is satisfied (with $\beta=\theta-\theta_2$ and $p=2$).

\begin{theorem}
   \label{t:dis-det}
(a) Let $\theta>0$, and let $u,v\in L^{\infty}(0,\infty;V^{-\theta_2}) \cap L^2(0,\infty;V^{\theta-\theta_2})$ be two weak solutions of \pref{e:op} with the forcing functions 
$f, g \in L^2(0,\infty;V^{-\theta-\theta_2})$, respectively.
Let $R_m : V^{\theta-\theta_2} \to H_m \subset H^{-\theta_2}$, $m\in\N$, be a family of operators
satisfying the approximation property \pref{e:approximation} with $\alpha=-\theta_2$.
In addition, with $\beta\geq\theta-\theta_2$, let the following conditions be fulfilled.
\begin{itemize}
\item[i)]
$b:V^{\sigma_1}\times V^{\beta}\times V^{\sigma_2}\ra\R$ is bounded for some 
$\sigma_1\leq\theta-\theta_2$ and $\sigma_2\leq\theta+\theta_2$ with $\sigma_1+\sigma_2<\theta$;
\item[ii)]
$b(w,z,Nz)=0$ for all $w\in V^{\sigma_1}$ and $z\in V^{\beta}$;
\item[iii)]
$\displaystyle\epsilon:=\inf_{T>0}\limsup_{t\to\infty}\frac1T\int_{t}^{t+T}\|u(\tau)\|_{\beta}^pd\tau<\infty$ with $p=\frac{\theta}{\theta-\sigma_1-\sigma_2}$;
\item[iv)]
$\displaystyle\lim_{t \rightarrow \infty} \| f(t) - g(t) \|_{-\theta-\theta_2} = 0$;
\item[v)] 
$\displaystyle\lim_{t \rightarrow \infty} 
    \| R_m [ u(t) - v(t) ] \|_{-\theta_2} = 0$ for some $m$ satisfying 
\\
$\xi(m)^{-2} > \dfrac{4C_A}{c_A} + \dfrac{4(p-1)\|b\|^2}{p^2c_A^2} \epsilon$.
\end{itemize}
Then we have
$$
\lim_{t \rightarrow \infty} \| u(t) - v(t) \|_{-\theta_2} = 0.
$$

(b) Assume all of the above hypotheses with the time interval $[0,\infty)$ replaced by $\R$, and the conditions iii)-v) are replaced by
that $f=g$, $\epsilon:=\sup_{t\in\R}\|u(\tau)\|_{\beta}^p<\infty$ with $p$ as above,
and that $R_m u(t) = R_m v(t)$ for all $t\in\R$ with $m$ as above.
Then we have $u=v$.
\end{theorem}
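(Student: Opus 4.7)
My plan is to follow the Foias--Prodi--Jones--Titi--Holst--Titi energy-approximation scheme, adapted to the abstract setting of the equation \pref{e:op}. Set $w=u-v$ and $h=f-g$. Subtracting the two weak equations and using bilinearity of $B$ gives the equation of the difference
\begin{equation*}
\pp_t w + Aw + B(u,w) + B(w,v) = h.
\end{equation*}
Pairing with $Nw$ and exploiting the self-adjointness of $N$, together with the antisymmetry hypothesis ii) applied to the first nonlinear term (to kill $b(u,w,Nw)$) and the coercivity assumptions \pref{e:coercive-a} (with $\beta=-\theta_2$ and $\Lambda^{2\beta}$ replaced by $N$) and \pref{e:coercive-n}, yields
\begin{equation*}
\tfrac{d}{dt}\|w\|_{-\theta_2}^2 + 2c_A\|w\|_{\theta-\theta_2}^2 \leq 2C_A\|w\|_{-\theta_2}^2 + 2|b(w,v,Nw)| + 2|\langle h,Nw\rangle|.
\end{equation*}

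The core estimate is for $b(w,v,Nw)$. Using the boundedness supplied by i), the mapping property \pref{e:bdd-amn} for $N$ (to pass $\|Nw\|_{\sigma_2}$ to $\|w\|_{\sigma_2-2\theta_2}$), and interpolation $\|w\|_s \lesssim \|w\|_{-\theta_2}^{1-\lambda}\|w\|_{\theta-\theta_2}^{\lambda}$ with $\lambda=(s+\theta_2)/\theta$, I will obtain a bound of the form
\begin{equation*}
|b(w,v,Nw)| \leq \|b\|\,\|v\|_{\beta}\,\|w\|_{-\theta_2}^{2-\mu}\|w\|_{\theta-\theta_2}^{\mu},
\end{equation*}
where $\mu<1$ courtesy of the strict inequality $\sigma_1+\sigma_2<\theta$ in i). Young's inequality, applied so as to absorb the higher-order factor into $c_A\|w\|_{\theta-\theta_2}^2$, then produces a term of the form $C\|b\|^{q}\|v\|_\beta^{q}\|w\|_{-\theta_2}^2$, with $q$ the Young conjugate of $2/\mu$; this is the exponent matching the $p$ appearing in hypothesis iii). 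The forcing pairing $|\langle h,Nw\rangle|$ is handled by Cauchy--Schwarz and Young, producing $C\|h\|_{-\theta-\theta_2}^2$ plus an absorbable $\tfrac{c_A}{2}\|w\|_{\theta-\theta_2}^2$.

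Next I invoke the approximation property \pref{e:approximation} in the form $\|w\|_{-\theta_2}^2 \leq 2\|R_m w\|_{-\theta_2}^2 + 2\xi(m)^2\|w\|_{\theta-\theta_2}^2$, which I solve for $\|w\|_{\theta-\theta_2}^2$ and insert into the energy inequality. Rearrangement produces a differential inequality of the form
\begin{equation*}
\tfrac{d}{dt}\|w\|_{-\theta_2}^2 + \bigl(\tfrac{c_A}{2}\xi(m)^{-2} - 2C_A - C\|b\|^{q}\|v(t)\|_\beta^{\,p}\bigr)\|w\|_{-\theta_2}^2 \leq C'\bigl(\|R_m w\|_{-\theta_2}^2 + \|h\|_{-\theta-\theta_2}^2\bigr).
\end{equation*}
Hypothesis iii) provides an upper bound on the long-time average of $\|v\|_\beta^{\,p}$ by $\epsilon$, and hypothesis v) is precisely the condition needed to make
\begin{equation*}
\liminf_{t\to\infty}\tfrac{1}{T}\int_{t}^{t+T}\!\!\bigl(\tfrac{c_A}{2}\xi(m)^{-2} - 2C_A - C\|b\|^{q}\|v\|_\beta^{\,p}\bigr)d\tau \geq \delta > 0
\end{equation*}
for some $\delta>0$ and all sufficiently large $T$. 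Combined with the fact that the right-hand side goes to zero (by iv) and v)), the generalized Gronwall lemma from Appendix~\ref{ss:gronwall} then forces $\|w(t)\|_{-\theta_2}\to 0$ as $t\to\infty$, proving (a). For (b), the same computation on $\R$ gives the same differential inequality but now with vanishing right-hand side and a uniformly positive coefficient (by the $\sup$ version of iii) and since $R_m w\equiv 0$); integrating backwards from any $t_0$ and using $\|w(t)\|_{-\theta_2}$ bounded forces $\|w\|_{-\theta_2}\equiv 0$, i.e.\ $u=v$.

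The main obstacle will be tracking the constants in the interpolation/Young step with sufficient care to recover the precise threshold $\xi(m)^{-2}> 4C_A/c_A + 4(p-1)\|b\|^2\epsilon/(p^2 c_A^2)$ stated in v); the optimal Young splitting (with exponent pair $(p/(p-1), p)$ rather than a generic one) is exactly what delivers the factor $(p-1)/p^2$, and the $4/c_A^2$ factor emerges from the two successive absorptions of $\tfrac{c_A}{2}\|w\|_{\theta-\theta_2}^2$ and from the factor of $2$ in the approximation inequality. Handling the borderline extension of the antisymmetry ii) from $\mathcal{V}$ to the relevant Sobolev scale so that $b(u,w,Nw)=0$ is legitimate pointwise in $t$ is a secondary technical point, handled by a density argument under i).
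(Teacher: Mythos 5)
Your overall scheme --- pair the difference equation for $w=u-v$ with $Nw$, kill one nonlinear term by the antisymmetry ii), estimate the surviving trilinear term by interpolation between $V^{-\theta_2}$ and $V^{\theta-\theta_2}$ followed by Young's inequality with the conjugate pair determined by $\sigma_1+\sigma_2<\theta$, lower-bound $\|w\|_{\theta-\theta_2}^2$ via the approximation property of $R_m$, and close with the generalized Gr\"onwall lemma (Lemma~\ref{lemma:gronwall_2}) --- is exactly the paper's argument, including the backward-in-time integration for part (b).

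However, there is one concrete error in the way you split the nonlinearity. You write $B(u,u)-B(v,v)=B(u,w)+B(w,v)$, use ii) to annihilate $b(u,w,Nw)$, and are left with $b(w,v,Nw)$, which by i) is bounded by $\|b\|\,\|w\|_{\sigma_1}\|v\|_{\beta}\|w\|_{\sigma_2-2\theta_2}$. The coefficient in your resulting differential inequality therefore involves $\|v(t)\|_{\beta}^{p}$, and you then claim that hypothesis iii) controls the long-time average of $\|v\|_{\beta}^{p}$. It does not: iii) is a statement about $u$ only, and $v$ is merely assumed to lie in $L^{\infty}(0,\infty;V^{-\theta_2})\cap L^{2}(0,\infty;V^{\theta-\theta_2})$, which gives no control of $\|v\|_{\beta}$ since $\beta\geq\theta-\theta_2$. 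As written, the damping coefficient in your Gr\"onwall inequality is not bounded below in time-average, and the argument does not close. The fix is to use the other decomposition, $B(u,u)-B(v,v)=B(w,u)+B(v,w)$, apply ii) to kill $b(v,w,Nw)$, and retain $b(w,u,Nw)$; its estimate produces $\|u\|_{\beta}$, which is precisely what iii) and the threshold in v) are calibrated to. This is what the paper does. (A secondary point you flag yourself --- extending ii) from $\mathcal{V}$ to $w\in V^{\theta-\theta_2}$ when $\beta>\theta-\theta_2$ --- is a density issue present in the paper's proof as well and is not the obstruction.)
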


\begin{proof}
Let $w=u-v$.
Subtracting the equations~\pref{e:op} for $u$ and $v$ yields
\begin{equation}
  \label{eqn:above}
\frac{dw}{dt} + A w + B(u,u) - B(v,v) = f-g.
\end{equation}
Pairing this with $Nw$, and by using condition ii), we get
\begin{equation}\label{eq:abv}
\dfrac{1}{2}\frac{d}{dt}\ang{w,Nw} + \ang{Aw, Nw} = \ang{f-g,Nw} - b(w,u,Nw) 
\end{equation}
Using Young's inequality, one can estimate the right hand side of equation \pref{eq:abv} as follows:
\begin{equation}\label{e:abv1}
\begin{split}
\dfrac{d}{dt}\|w\|_N^2 + 2\ang{Aw,Nw} \leq & \dfrac{1}{\delta}\|f-g\|^2_{-\theta-\theta_2} + \delta \|N\|^2_{-\theta_2;\theta_2}\|w\|_{\theta-\theta_2}^2 \\
& + \|b\|\|w\|_{\sigma_1}\|u\|_{\beta}\|w\|_{\sigma_2-2\theta_2},
\end{split}
\end{equation}
where we denote $\|w\|_{N}^2:=\ang{w,Nw}$.
We estimate the last term as follows:
\begin{equation}
\|w\|_{\sigma_1} \|u\|_\beta\|w\|_{\sigma_2-2\theta_2}\leq \|w\|^{2-\lambda_1-\lambda_2}_{-\theta_2} \|w\|_{\theta-\theta_2}^{\lambda_1+\lambda_2}\|u\|_{\beta},
\end{equation}
where $\lambda_1 = \frac{\sigma_1 + \theta_2}{\theta}$ and $\lambda_2 = \frac{\sigma_2-\theta_2}{\theta}$.
Using Young's inequality, we get
\begin{equation*}\label{eqn:abv2}
\|w\|_{\sigma} \|u\|_\beta\|w\|_{\sigma_2-2\theta_2}\leq \dfrac{\veps}{q}\|w\|^{(\lambda_1 + \lambda_2)q}_{\theta-\theta_2} + \dfrac{1}{p\veps}\|w\|^{(2-\lambda_1-\lambda_2)p}_{-\theta_2} \|u\|_\beta^p.
\end{equation*}
Note that $(\lambda_1 + \lambda_2)q=(2-\lambda_1-\lambda_2)p=2$.
Let us now choose $\delta=\frac{c_A}{2\|N\|_{-\theta_2;\theta_2}^2}$ and $\veps=\frac{qc_A}{2\|b\|}$, then it follows, taking into account the coercivity of $A$ that 
\begin{equation*}
\dfrac{d}{dt}\|w\|_N^2 + c_A\|w\|^2_{-\theta-\theta_2} - 2C_A\|w\|_{-\theta_2}^2
\leq 
\dfrac{1}{\delta}\|f-g\|^2_{-\theta-\theta_2} +  \dfrac{\|b\|}{p\veps}\|w\|^{2}_{-\theta_2} \|u\|_\beta^p.
\end{equation*}
To bound the second term on the left from below, we employ the approximation assumption on $R_m$, which yields
\begin{equation}\label{e:abv3}
\begin{split}
\dfrac{d}{dt}\|w\|_N^2 & + \left( \frac{c_A}{2}\xi(m)^{-2} - 2C_A - \dfrac{\|b\|}{p\veps} \|u\|_\beta^p \right) \|w\|^2_{-\theta_2} 
\\
& \leq \dfrac{1}{\delta}\|f-g\|^2_{-\theta-\theta_2}
+ \xi(m)^{-2} \|R_m w\|^2_{-\theta_2}.
\end{split}
\end{equation}
This is of the form
$$
\frac{d}{dt} \|w\|_{-\theta_2}^2 + x \|w\|_{-\theta_2}^2 \le y,
$$
with obvious definition of $x$ and $y$.

Lemma~\ref{lemma:gronwall_2} can now be applied.
Recall both $\|f-g\|_{-\theta-\theta_2} \rightarrow 0$ and
$\|R_m w\|_{-\theta_2} \rightarrow 0$ 
as $t \rightarrow \infty$ by assumptions iv) and v).
So taking into account iii) we have
$$
\lim_{t \rightarrow \infty}
     \int_{t}^{t+T} y^+(\tau) d\tau = 0,
\ \ \ \ \ \ \ \ 
\limsup_{t \rightarrow \infty}
    \int_{t}^{t+T} x^-(\tau) d\tau <\infty.
$$
It remains to verify that for some $T>0$,
$$
\liminf_{t \rightarrow \infty}
    \int_{t}^{t+T} x(\tau) d\tau >0.
$$
This means we must verify the following inequality for some $T > 0$:
\begin{equation}
   \label{eqn:key}
\xi(m)^{-2}
> 
\frac{4C_A}{c_A} + \dfrac{2\|b\|}{p\veps c_A} 
\limsup_{t \rightarrow \infty}
\frac{1}{T} \int_{t}^{t+T} \|u\|_\beta^p d\tau.
\end{equation}
Therefore, if
\begin{equation}
\xi(m)^{-2}
> 
\frac{4C_A}{c_A} + \dfrac{2\|b\|}{p\veps c_A} \epsilon,
\end{equation}
implying that~\pref{eqn:key} holds for some $T>0$, then by Lemma~\ref{lemma:gronwall_2}, it follows that
$$
\lim_{t \rightarrow \infty} \| u(t) - v(t) \|_{-\theta_2} 
= \lim_{t \rightarrow \infty} \|w(t)\|_{-\theta_2}
= 0.
$$
This completes the proof of (a).

For (b), the right hand of \pref{e:abv3} vanishes, implying
$$
\frac{d}{dt} \|w\|_{-\theta_2}^2 + k \|w\|_{-\theta_2}^2 \le 0,
$$
with some $k>0$.
The Gr\"onwall inequality gives
$$
\|w(t)\|_{-\theta_2}^2 \leq e^{k(s-t)} \|w(s)\|_{-\theta_2}^2,
$$
for any $t\geq s$, and now sending $s\to-\infty$ we get the conclusion $w(t)=0$.
\end{proof}

\begin{remark}
The trilinear forms $b_1$ and $b_3$ satisfy the hypothesis of a) provided 
$\frac{3\theta}{2}+ 2\tone+\ttwo > \frac{n+2}{2}$, 
$\theta + \theta_2 \geq \frac{1}{2}$,
$\frac{\theta}{2}+2\theta_1 > k$, and
$\theta+\theta_1 \geq \frac{1-k}{2}$, for some $k\in\{0,1\}$.

The trilinear form $b_2$ satisfies the hypothesis of a) provided 
$\frac{3\theta}{2}+ 2\tone+2\ttwo > \frac{n+2}{2}$, 
$\theta + \theta_2 \geq 0$,
$\frac{\theta}{2}+2\theta_1 > 1$, and
$\theta+\theta_1 \geq \frac{1}{2}$.

The trilinear forms $b_4$ and $b_5$ satisfy the hypothesis of a) provided 
$\frac{3\theta}{2}+ 2\tone+\ttwo > \frac{n+2}{2}$, 
$\theta + \theta_2 \geq \frac{1}{2}$,
$\frac{\theta}{2}+2\theta_1 > 1$, and
$\theta+\theta_1 \geq \frac{1}{2}$
\end{remark}

\begin{remark}
From \pref{energy_f}, we have $\displaystyle\epsilon\simeq \|f\|_{V^{-\theta-\ttwo}}^2\simeq G^2$ with $\beta=0$ and $p=2$.
Then condition $v)$ of Theorem \ref{t:dis-det} is equivalent to the condition $\displaystyle\xi(m)^{-2}\geq cG^2$.
Assuming that $\displaystyle\xi(m)\simeq m^{-({\theta-\ttwo-\alpha})/{n}}$,  and putting $\alpha=-\ttwo$, we have  $\displaystyle\xi(m)\simeq m^{-{\theta}/{n}}$.
Hence $\displaystyle m\gtrsim G^{{n}/{\theta}}$.
\end{remark}

\subsection{Nondissipative systems}
\label{ss:non-dis1}

In this subsection, we consider non-dissipative systems, which are represented in our generalized model when $\theta=0$.

\begin{theorem}
   \label{t:nondis-det}
(a) Let $u,v\in L^{\infty}(0,\infty;V^{-\theta_2})$ be two solutions of \pref{e:weak} with the forcing functions
$f, g \in L^2(0,\infty;V^{-\theta_2})$, respectively,
and with $\theta=0$ and
$$
\ang{Av,Nv}\geq c_A \|v\|_{-\theta_2}^2,
\qquad v\in V^{-\theta_2}.
$$
For some $\alpha\leq-\theta_2$, let $R_m : V^{-\theta_2} \to H_m \subset H^{\alpha}$, $m\in\N$, be a family of operators
satisfying the approximation property \pref{e:approximation}.
In addition, with $\beta\geq-\theta_2$, let the following conditions be fulfilled.
\begin{itemize}
\item[i)]
$b:V^{\alpha}\times V^{\beta}\times V^{\theta_2}\ra\R$ is bounded;
\item[ii)]
$b(w,z,Nz)=0$ for all $w\in V^{\alpha}$ and $z\in V^{\beta}$;
\item[iii)]
$\displaystyle\epsilon:=\limsup_{t\to\infty}\|u(t)\|_{\beta}<\infty$;
\item[iv)]
$\displaystyle\lim_{t \rightarrow \infty} \| f(t) - g(t) \|_{-\theta_2} = 0$;
\item[v)] 
$\displaystyle\lim_{t \rightarrow \infty} 
    \| R_m [ u(t) - v(t) ] \|_{\alpha} = 0$ for some $m$ satisfying $\displaystyle\xi(m) < \frac{c_A}{\|b\|\epsilon}$.
\end{itemize}
Then we have
$$
\lim_{t \rightarrow \infty} \| u(t) - v(t) \|_{-\theta_2} = 0.
$$

(b) Assume all of the above hypotheses with the time interval $[0,\infty)$ replaced by $\R$, and the conditions iii)-v) are replaced by
that $f=g$,  $\epsilon:=\sup_{t\in\R}\|u(\tau)\|_{\beta}<\infty$,
and that $R_m u(t) = R_m v(t)$ for all $t\in\R$ with $m$ as above.
Then we have $u=v$.
\end{theorem}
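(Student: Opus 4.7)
The plan is to mirror the argument for Theorem \ref{t:dis-det} while adapting to the fact that for $\theta=0$ the coercivity of $A$ controls only $\|w\|_{-\theta_2}$ and offers no extra smoothing. Setting $w=u-v$ and decomposing $B(u,u)-B(v,v)=B(w,u)+B(v,w)$, the weak equation for $w$ paired with $Nw$ (using that $N$ is self-adjoint) becomes
\begin{equation*}
\tfrac{1}{2}\tfrac{d}{dt}\ang{w,Nw} + \ang{Aw,Nw} = \ang{f-g,Nw} - b(w,u,Nw),
\end{equation*}
since the antisymmetry in hypothesis ii), applied with first slot $v$ and second slot $w$, kills $b(v,w,Nw)$.

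The coercivity of $A$ yields $\ang{Aw,Nw}\geq c_A\|w\|_{-\theta_2}^2$; the forcing pairing is bounded in the standard way by $\|f-g\|_{-\theta_2}\|Nw\|_{\theta_2}\lesssim\|f-g\|_{-\theta_2}\|w\|_{-\theta_2}$; and the nonlinear residual is estimated via hypothesis i) by
\begin{equation*}
|b(w,u,Nw)|\leq\|b\|\,\|w\|_\alpha\|u\|_\beta\|Nw\|_{\theta_2}\lesssim\|b\|\,\|w\|_\alpha\|u\|_\beta\|w\|_{-\theta_2}.
\end{equation*}
The key move is to insert the approximation inequality \pref{e:approximation} (with $\theta=0$), giving $\|w\|_\alpha\leq\|R_m w\|_\alpha+\xi(m)\|w\|_{-\theta_2}$. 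The contribution proportional to $\xi(m)\|u\|_\beta\|w\|_{-\theta_2}^2$ competes against the $c_A\|w\|_{-\theta_2}^2$ produced by coercivity, while the $\|R_m w\|_\alpha\|u\|_\beta\|w\|_{-\theta_2}$ term is split by Young's inequality into a further $\|w\|_{-\theta_2}^2$ piece and a forcing piece $\lesssim\|R_m w\|_\alpha^2\|u\|_\beta^2$.

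Collecting terms yields a differential inequality of the form $\tfrac{d}{dt}\|w\|_N^2+x(t)\|w\|_N^2\leq y(t)$, where $x(t)\sim 2c_A-C\|b\|\,\xi(m)\|u(t)\|_\beta$ modulo the norm equivalence $\|\cdot\|_N\sim\|\cdot\|_{-\theta_2}$ coming from \pref{e:coercive-n}, and where $y(t)$ aggregates $\|f-g\|_{-\theta_2}^2$ and $\|R_m w\|_\alpha^2\|u\|_\beta^2$. Hypotheses iv) and v) force $y(t)\to 0$; hypothesis iii) together with the sharp threshold $\xi(m)<c_A/(\|b\|\epsilon)$ force $\liminf_{t\to\infty}T^{-1}\int_t^{t+T}x(\tau)\,d\tau>0$ for some $T>0$. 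Invoking Lemma \ref{lemma:gronwall_2} then delivers $\|w(t)\|_{-\theta_2}\to 0$ and settles part (a).

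For (b), the hypotheses $f=g$ and $R_m u(t)=R_m v(t)$ for all $t\in\R$ zero out the right-hand side and tighten the approximation bound to $\|w\|_\alpha\leq\xi(m)\|w\|_{-\theta_2}$ everywhere. The same chain of estimates, with $\sup_{\R}\|u\|_\beta\leq\epsilon$ replacing the limsup, yields $\tfrac{d}{dt}\|w\|_N^2+k\|w\|_N^2\leq 0$ for some $k>0$, so Gr\"onwall gives $\|w(t)\|_N\leq e^{-k(t-s)/2}\|w(s)\|_N$; taking $s\to-\infty$ using $u,v\in L^\infty(\R;V^{-\theta_2})$ forces $w\equiv 0$. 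The main obstacle I anticipate is keeping the constants clean enough that the threshold produced by the argument is exactly $c_A/(\|b\|\epsilon)$ rather than something weaker; the coercivities of $A$ and $N$ and the mapping constant of $N:V^{-\theta_2}\to V^{\theta_2}$ all enter, and the Young parameters must be chosen carefully to absorb them into the stated form.
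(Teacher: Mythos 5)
Your proposal is correct and follows essentially the same route as the paper's proof: pair the equation for $w=u-v$ with $Nw$, use the antisymmetry ii) to reduce the nonlinearity to $b(w,u,Nw)$, invoke coercivity of $A$ and the approximation property of $R_m$ to absorb the $\xi(m)\|w\|_{-\theta_2}^2$ contribution, and close with Lemma~\ref{lemma:gronwall_2} (and backward Gr\"onwall for part (b)). The only cosmetic difference is that you split the residual term $\|R_mw\|_{\alpha}\|b\|\|u\|_{\beta}\|w\|_{-\theta_2}$ by Young's inequality, whereas the paper keeps it as the $y\,\xi^{1/2}$ term already accommodated by the generalized Gr\"onwall lemma; both preserve the stated threshold $\xi(m)<c_A/(\|b\|\epsilon)$ since the surplus $c_A-\xi(m)\|b\|\epsilon>0$ leaves room for an arbitrarily small Young parameter.
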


\begin{proof}
We start as in the proof of Theorem \ref{t:dis-det}, but instead of \pref{e:abv1} we get the following.
\begin{equation*}
\dfrac{d}{dt}\|w\|_N^2 + 2\ang{Aw,Nw} \leq \dfrac{1}{\delta}\|f-g\|^2_{-\theta_2} + \delta \|N\|^2_{-\theta_2;\theta_2}\|w\|_{-\theta_2}^2 + \|b\|\|w\|_{\alpha}\|u\|_{\beta}\|w\|_{-\theta_2}.
\end{equation*}
Let us now choose $\delta=\frac{c_A}{\|N\|_{-\theta_2;\theta_2}^2}$, then it follows, taking into account the coercivity of $A$ that 
\begin{equation*}
\dfrac{d}{dt}\|w\|_N^2 + c_A\|w\|^2_{-\theta_2}
\leq 
\dfrac{1}{\delta}\|f-g\|^2_{-\theta_2} + \|b\|\|w\|_{\alpha}\|u\|_{\beta}\|w\|_{-\theta_2}.
\end{equation*}
To bound the last term from above, we employ the approximation assumption on $R_m$, which yields
\begin{equation}
\begin{split}
\dfrac{d}{dt}\|w\|_N^2 & + \left( c_A - \xi(m) \|b\| \|u\|_\beta \right) \|w\|^2_{-\theta_2} \\
& \leq 
\dfrac{1}{\delta}\|f-g\|^2_{-\theta_2} + \|R_m w\|_{\alpha} \|b\| \|u\|_{\beta}\|w\|_{-\theta_2}.
\end{split}
\end{equation}
This is of the form
$$
\frac{d}{dt} \|w\|_{-\theta_2}^2 + x \|w\|_{-\theta_2}^2 \le y + y \|w\|_{-\theta_2},
$$
and an application of Lemma~\ref{lemma:gronwall_2} completes the proof of (a).
Part (b) is proven following the same argument as in the proof of Theorem \ref{t:dis-det}(b).
\end{proof}

\begin{remark}
Note that in the case of the NSV we need $-\frac32\leq\alpha\leq -1 $ in order to satisfy Theorem~\ref{t:nondis-det} $i)$ and the condition $\alpha\leq -1$.
Choosing $\alpha=-\frac32$ we get $m\gtrsim G^{6}$.
This is consistent with the calculations in~\cite{KT07}.
\end{remark}

\begin{remark}
The trilinear forms $b_1$ and $b_3$ satisfy the hypothesis i) of Theorem~\ref{t:nondis-det} provided 
$\beta+ 2\tone+2\ttwo > \frac{n+2}{2}$, 
$\beta + 3\theta_2 \geq 1$,
$2\theta_1 > k$, and
$2\theta_1+\beta+\ttwo \geq 1-k$, for some $k\in\{0,1\}$.
Also, we need that 
$\alpha> \frac{n+2}{2}-2\theta_1 -\beta-3\ttwo$,
$\alpha\geq k-2\tone-\ttwo$, and 
$\alpha\geq 1-k-2\tone-\beta-2\ttwo$, for some $k\in\{0,1\}$.
\end{remark}

\begin{remark}
The trilinear form $b_2$ satisfies the hypothesis i) of Theorem~\ref{t:nondis-det} provided 
$\beta+ 2\tone+2\ttwo > \frac{n+2}{2}$, 
$\beta + 3\theta_2 \geq 0$,
$2\theta_1 > 1$, and
$2\theta_1+\beta+\ttwo \geq 1$.
Also, we need that 
$\alpha> \frac{n+2}{2}-2\theta_1 -\beta-3\ttwo$,
$\alpha\geq 1-2\tone-\ttwo$, and 
$\alpha\geq 1-2\tone-\beta-2\ttwo$.
\end{remark}

\begin{remark}
The trilinear forms $b_4$ and $b_5$ satisfy the hypothesis i) of Theorem~\ref{t:nondis-det} provided 
$\beta+ 2\tone+2\ttwo > \frac{n+2}{2}$, 
$\beta + 3\theta_2 \geq 1$,
$2\theta_1 > 1$, and
$2\theta_1+\beta+\ttwo \geq 1$.
Also, we need that 
$\alpha> \frac{n+2}{2}-2\theta_1 -\beta-3\ttwo$,
$\alpha\geq 1-2\tone-\ttwo$, and 
$\alpha\geq 1-2\tone-\beta-2\ttwo$.
\end{remark}

\begin{remark}
  From \pref{energy_f} $\epsilon\simeq \|f\|_{V^{-\ttwo}}\simeq G$.
Then condition v) of Theorem \ref{t:dis-det} is equivalent to the condition $\xi(m)^{-1}\leq  cG$.
Assuming that $\xi(m)\simeq m^{-({\theta-\ttwo-\alpha})/{n}}$,  and putting $\theta=0$, we have $\xi(m)\simeq m^{({\ttwo+\alpha})/{n}}$.
Hence $m\gtrsim G^{-{n}/({\theta_2+\alpha})}$, where $\alpha\leq-\ttwo$.
\end{remark}

\section{Length-scale estimates in terms of the Reynolds number}
\label{s:Reynolds}


In the previous section, we established estimates on the number of degrees of freedom in terms of the generalized Grashof number $G$, a dimensionless parameter which measures the relative magnitude of forcing to the viscosity $\nu$.
A complementary scheme was introduced by Doering and Foias in~\cite{DF02}, to recast all the estimates in terms of the Reynolds number $Re$.
The Reynolds number measures the effect of nonlinearity in the fluid response, and in the current setting it allows us to measure the effects of modifying the nonlinearity.
It is important to recognize that in the engineering and physics communities, the Reynolds number is used more frequently than the Grashof number, as is viewed as more directly physically meaningful.
Therefore, in this section we will derive a lower bound on the Kolmogorov dissipation length-scale in terms of the Reynolds number, which will help provide some tools for relating the Grashof number-based results of the previous section to analogous statements involving the Reynolds number.

To set some notation, we briefly review the ideas discussed in~\cite{DF02,GH06} and then apply the analogous procedure to our more general problem.
Given the velocity field $\uns$ for the Navier-Stokes equations taken on an $n$-dimensional periodic domain $[0,L]^n$ with divergence free condition, the Reynolds number $Re$ of the flow is defined as
\begin{equation}\label{e:re}
Re = \frac{U l}{\nu}, \quad\mbox{where}\quad U^2 = L^{-n}\overline{\|\uns\|_0^2,} 
\end{equation}
where the overline denotes the long-time average
\begin{equation}
\overline{g} = \limsup_{t\rightarrow\infty} \frac{1}{t}\int_0^t g(\tau) \; d\tau,
\end{equation}
and $l$ is characterized by the following ``narrow-band type'' assumption on $f$
\begin{equation}
\|\nabla^{r}f\|_0 \simeq l^{-r}\|f\|_0.
\end{equation}
Recall the standard definition of Grashof number in $n$ dimensions in terms of the ``root mean square'' of the force
\begin{equation}\label{e:Gr_standard}
G = \frac{l^3f_{\textrm{rms}}}{\nu^2},
\quad\mbox{where}\quad 
f_{\textrm{rms}} = L^{-n/2}\|f\|_0
\end{equation}
Doering and Foias showed recently in~\cite{DF02} that in the limit $G\rightarrow\infty$, the solutions of the $n$-dimensional Navier-Stokes equations satisfy 
\begin{equation}\label{e:gr_estimate}
G \lesssim Re^2 + Re.
\end{equation}
The above estimate gives a way to transform any estimate given in terms of $G$ into an analogous estimate given in terms of $Re$.
However, as the following example from~\cite{GH06,GH08} shows, this procedure does not always give sharp estimates.
Consider the problem of bounding the Kolmogorov dissipation length-scale for the Navier-Stokes equations from below.
The Kolmogorov dissipation length-scale in terms of the energy dissipation rate $\eps$ is given by
\begin{equation}
l_d= (\nu^3/\eps)^{1/4},
\quad\textrm{where}\quad
\eps = \nu L^{-n}\overline{\|\nabla\uns\|_0^2} \lesssim G^2.
\end{equation}
Then, using \eqref{e:gr_estimate} we have the following bound
\begin{equation}\label{e:kolmogorov}
l\cdot l_d^{-1} \lesssim Re.
\end{equation}
In the three-dimensional case, one can obtain an estimate for the number of degrees of freedom in turbulent flows by dividing a typical length-scale of the flow by $l_d$ and taking the third power.
Thus, \eqref{e:kolmogorov} gives an upper bound of $Re^3$ to the number of degrees of freedom in turbulent flows which is not sharp compared to the generally accepted $Re^{9/4}$.
The authors of~\cite{DF02,GH06} obtained the bound of $Re^{9/4}$ by time-averaging the Leray's energy inequality and using \eqref{e:gr_estimate}.

In comparing estimates for the Navier-Stokes equations given in terms of the Reynolds number (and other quantities) to similar estimates for regularized equations, there is a basic issue of identifying precisely what the Reynolds number is for the regularized equations.
Here we will extend the approach followed in~\cite{GH06,GH08}, and identify the Navier-Stokes velocity field as 
$\uns = Nu$ (or $\uns=Mu$ if $M$ is more smoothing than $N$), where $u$ is the regularized velocity field, and define the Reynolds number and the energy dissipation rate $\eps$ in terms of $\uns$.
We have then
\begin{equation*}
\|\uns\|_0=\|Nu\|_0\eqsim\|u\|_{-2\theta_2},
\quad\textrm{or}\quad
\|\uns\|_0=\|Mu\|_0\eqsim\|u\|_{-2\theta_1},
\end{equation*}
and similarly, $\|\nabla\uns\|_0\eqsim\|u\|_{1-2\theta_2}$ or $\|\nabla\uns\|_0\eqsim\|u\|_{1-2\theta_1}$.
In other words, the approach of~\cite{GH06,GH08} naturally extends to our more general setting here, giving definitions of $Re$ and $\eps$ that satisfy
\begin{equation*}
Re^2\eqsim U^2\eqsim \overline{\|u\|_{-2\check\theta}^2},
\qquad\mbox{and}\qquad
\eps\eqsim\overline{\|u\|_{1-2\check\theta}^2},
\qquad\textrm{with}\quad
\check\theta=\max\{\theta_1,\theta_2\}.
\end{equation*}
The constants in the definitions are irrelevant as far as the asymptotic behavior of the bounds are concerned.
In the following theorem we derive a bound on a mean square Sobolev norm of $u$ in terms of a similar norm with a lower Sobolev order.
This will make possible the corollary following the theorem, which gives length-scale estimates in terms of the Reynolds number for the general setting in this paper.
This makes it possible to relate the bounds of~\S\ref{s:determining} giving in terms of the Grashof number to analogous bounds given in terms of the Reynolds number.

\begin{theorem}\label{t:length}
Let $u\in L^{\infty}(0,\infty;V^{-\theta_2})\cap L^{2}_{\textrm{\em loc}}(0,\infty;V^{\theta-\theta_2})$
be a solution to \eqref{e:op},
and let $\alpha<\beta\leq\theta-\theta_2$.
Let the following conditions hold:
\begin{itemize}
\item[i)] 
$b:V^{\alpha}\times V^{\alpha}\times V^{\gamma}\ra\R$ is bounded for some 
$\gamma$;
\item[ii)] 
$b(v,v,Nv)=0$ for $v\in V^{\theta-\theta_2}$;
\item[iii)] 
$\ang{Av,Nv}\geq c \|v\|_{\theta-\theta_2}^2$ for any $v\in V^{\theta-\theta_2}$, with a constant $c>0$;
\item[iv)] 
The forcing term $f$ is independent of $t$, and satisfies
\begin{equation*}
\|f\|_s
\leq
C(s)
\|f\|_0,
\end{equation*}
for any $s$, with constants $C(s)$ depending on $s$.
\end{itemize}
Then we have
\begin{equation*}
\overline{\|u\|_{\beta}^2}
\lesssim
\overline{\|u\|_{\alpha}^2}
+
\left(\overline{\|u\|_{\alpha}^2}\right)^{1+\frac{\beta-\alpha}{2(\theta-\theta_2-\alpha)}}
\end{equation*}
\end{theorem}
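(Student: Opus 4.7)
The plan is to combine the basic energy identity with Sobolev interpolation and a Hölder/Young estimate in time.

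First, I would pair the equation with $Nu$. Hypothesis (ii) annihilates the trilinear term $b(u,u,Nu)$, and hypothesis (iii) furnishes the coercive lower bound $\ang{Au,Nu}\geq c\|u\|_{\theta-\theta_2}^2$. Using the self-adjointness of $N$, the boundedness $N:V^\alpha\to V^{\alpha+2\theta_2}$, and the estimate $\|f\|_s\lesssim\|f\|_0$ from hypothesis (iv), the forcing is controlled by
\begin{equation*}
|\ang{f,Nu}|\leq\|f\|_{-\alpha-2\theta_2}\,\|Nu\|_{\alpha+2\theta_2}\lesssim\|f\|_0\,\|u\|_\alpha,
\end{equation*}
yielding the differential inequality
\begin{equation*}
\tfrac{d}{dt}\|u\|_N^2+2c\,\|u\|_{\theta-\theta_2}^2\lesssim\|f\|_0\,\|u\|_\alpha.
\end{equation*}

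Second, I would integrate from $0$ to $t$, divide by $t$, and send $t\to\infty$. Since $u\in L^\infty(0,\infty;V^{-\theta_2})$ implies $\|u\|_N^2$ is uniformly bounded, the boundary terms disappear in the limit, and Cauchy--Schwarz in time gives
\begin{equation*}
\overline{\|u\|_{\theta-\theta_2}^2}\lesssim\|f\|_0\sqrt{\overline{\|u\|_\alpha^2}}.
\end{equation*}

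Third, setting $\lambda=(\beta-\alpha)/(\theta-\theta_2-\alpha)\in(0,1]$, the Sobolev interpolation $\|u\|_\beta^2\lesssim\|u\|_\alpha^{2(1-\lambda)}\|u\|_{\theta-\theta_2}^{2\lambda}$, combined with Hölder in time using conjugate exponents $\tfrac{1}{1-\lambda}$ and $\tfrac{1}{\lambda}$ (trivial when $\lambda=1$), produces
\begin{equation*}
\overline{\|u\|_\beta^2}\leq(\overline{\|u\|_\alpha^2})^{1-\lambda}(\overline{\|u\|_{\theta-\theta_2}^2})^\lambda\lesssim\|f\|_0^\lambda\,(\overline{\|u\|_\alpha^2})^{1-\lambda/2}.
\end{equation*}
A final Young's inequality on the product $\|f\|_0^\lambda(\overline{\|u\|_\alpha^2})^{1-\lambda/2}$, with exponents selected so that the second factor is split into the powers $\overline{\|u\|_\alpha^2}$ and $(\overline{\|u\|_\alpha^2})^{1+\lambda/2}$, yields the stated bound with $\|f\|_0$-dependent constants absorbed into the $\lesssim$.

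The main obstacle I foresee is this last Young step: the constants depending on $\|f\|_0$ and the structural quantities $c,c_N$ must be distributed across the two terms $\overline{\|u\|_\alpha^2}$ and $(\overline{\|u\|_\alpha^2})^{1+\lambda/2}$ in a way that handles both the small- and large-$\overline{\|u\|_\alpha^2}$ regimes correctly. A subsidiary technical point is to justify $Nu$ as an admissible test function in the weak formulation, which follows from a Galerkin approximation in the spirit of the proof of Theorem~\ref{t:exist} and then passage to the limit.
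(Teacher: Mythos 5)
Your first two steps coincide with the paper's: pairing with $Nu$, using (ii) and (iii) to get $\overline{\|u\|_{\theta-\theta_2}^2}\lesssim \|f\|_0\,U$ with $U^2=\overline{\|u\|_\alpha^2}$, and then interpolating $\|u\|_\beta\leq\|u\|_\alpha^{1-\lambda}\|u\|_{\theta-\theta_2}^{\lambda}$ with H\"older in time to reach $\overline{\|u\|_\beta^2}\lesssim \|f\|_0^{\lambda}\,U^{2-\lambda}$. The gap is in the final step, and it is not a matter of distributing constants: the inequality $\|f\|_0^{\lambda}X^{1-\lambda/2}\lesssim X+X^{1+\lambda/2}$ (with $X=U^2$) is false near $X=0$, because the exponent $1-\lambda/2$ on the left is strictly smaller than both exponents $1$ and $1+\lambda/2$ on the right, so the ratio blows up as $X\to0^+$ no matter how the ($\|f\|_0$-dependent) constants are chosen. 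Young's inequality can only trade a product for a sum of \emph{larger} powers when the factors are independent; here the left-hand side simply is not dominated by the right-hand side for small $U$ unless $\|f\|_0$ itself is controlled by $U$.

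That control is exactly the ingredient the paper supplies and your proposal omits: pairing the equation with the (time-independent) forcing $f$ itself gives $\ang{f,f}=\ang{Au,f}+b(u,u,f)+\frac{d}{dt}\ang{u,f}$, and after integrating in time and using hypothesis (iv) to convert $\|f\|_{2\theta-\alpha}$, $\|f\|_\gamma$, $\|f\|_{\theta_2}$ into $\|f\|_0$, one obtains $\|f\|_0\lesssim U+U^2$. Note that this is the only place hypothesis (i) (boundedness of $b:V^\alpha\times V^\alpha\times V^\gamma\to\R$) is needed, to estimate $b(u,u,f)\lesssim\|u\|_\alpha^2\|f\|_\gamma$; the fact that your argument never invokes (i) is a symptom of the missing step. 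Substituting $\|f\|_0\lesssim U+U^2$ back yields $\overline{\|u\|_{\theta-\theta_2}^2}\lesssim U^2+U^3$, and then the interpolation gives $\overline{\|u\|_\beta^2}\lesssim U^{2(1-\lambda)}(U^2+U^3)^{\lambda}\lesssim U^2+U^{2+\lambda}$, which is the stated bound with a purely structural implicit constant (as is needed for the Reynolds-number corollary that follows). Your remarks about testing with $Nu$ via Galerkin approximation are fine but peripheral.
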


\begin{proof}
Pairing \eqref{e:op} with $Nu$, and by using ii), we get
\begin{equation*}
\ang{Au, Nu} = \ang{f,Nu} - \frac{1}{2}\frac{d}{dt}\ang{u,Nu}.
\end{equation*}
Then integrating in time gives
\begin{equation}\label{e:bnd-uf}
\int_0^t\|u\|_{\theta-\theta_2}^2 \lesssim \int_0^t\|u\|_{\alpha}\|f\|_{-2\theta_2-\alpha}  + \|u\|_{L^\infty(0,t;V^{-\theta_2})}^2.
\end{equation}
In terms of long time averages, from the above one can derive
\begin{equation*}
\overline{\|u\|_{\theta-\theta_2}^2} 
\lesssim 
\|f\|_{-2\theta_2-\alpha} \sqrt{\overline{\|u\|_{\alpha}^2}}
\lesssim
\|f\|_0U,
\end{equation*}
where
$U^2=\overline{\|u\|_{\alpha}^2}$, and we have used iv) in the last step.

Now we will bound $\|f\|_0$ in terms of $u$.
To this end, let us pair \eqref{e:op} with $f$, and write
\begin{equation*}
\ang{f,f} = \ang{Au, f} + b(u,u,f) + \frac{d}{dt}\ang{u,f}.
\end{equation*}
Recalling that $f$ is independent of $t$, and integrating in $t$, we have
\begin{equation}\label{e:bnd-f}
t\|f\|_0^2 \lesssim \int_0^t\|u\|_\alpha\|f\|_{2\theta-\alpha} + \int_0^t\|u\|_\alpha^2\|f\|_{\gamma} + \|u\|_{L^\infty(0,t;V^{-\theta_2})}\|f\|_{\theta_2},
\end{equation}
which implies
\begin{equation*}
\|f\|_{0} \lesssim  U+U^2.
\end{equation*}
Now plugging \eqref{e:bnd-f} into \eqref{e:bnd-uf}, we have
\begin{equation}\label{e:bnd-u}
\int_0^t\|u\|_{\theta-\theta_2}^2 
\lesssim 
\int_0^t\|u\|_{\alpha}\cdot
\frac1t
\left(
\int_0^t\|u\|_\alpha + \int_0^t\|u\|_\alpha^2 + O(1)
\right)
+ 
O(1),
\end{equation}
giving
\begin{equation*}
\overline{\|u\|_{\theta-\theta_2}^2} 
\lesssim
U^2+U^3.
\end{equation*}
Finally, we write the interpolation inequality
\begin{equation*}
\|u\|_{\beta} \leq \|u\|_{\alpha}^{1-\lambda}\|u\|_{\theta-\theta_2}^{\lambda},
\qquad\textrm{with }
\lambda=\frac{\beta-\alpha}{\theta-\theta_2-\alpha},
\end{equation*}
and calculate its long time average to establish the proof.
\end{proof}

Now we apply the above result to the situation where $\alpha=-2\theta_1$ and $\beta=1-2\theta_1$, and the trilinear form $b$ is given by $b_1$ with $\theta_1=\theta_2$.

\begin{corollary}\label{c:length}
Let the conditions ii), iii) and iv) of the previous theorem be satisfied,
and let $b=b_1$ with $\theta_1=\theta_2$.
Then we have
\begin{equation*}
\varepsilon\lesssim U^2 + U^{2+\frac{1}{\theta+\theta_1}},
\qquad\textrm{where}\quad
\varepsilon=\overline{\|u\|_{1-2\theta_1}^2},
\quad\textrm{and}\quad
U^2=\overline{\|u\|_{-2\theta_1}^2}.
\end{equation*}
In terms of the Kolmogorov length-scale and the Reynolds number, this is
\begin{equation*}
l_d^{-1}\lesssim Re^{\frac{1}{4}\left(2+\frac{1}{\theta+\theta_1}\right)} + Re^{\frac12}.
\end{equation*}
\end{corollary}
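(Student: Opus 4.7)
The plan is to reduce the estimate on $\varepsilon$ to a direct application of Theorem \ref{t:length} with the specific choices $\alpha=-2\theta_1$ and $\beta=1-2\theta_1$, and then translate the resulting inequality into the Kolmogorov/Reynolds language using the identifications $\tilde u=Nu$ (equivalently $\tilde u=Mu$, since $\theta_1=\theta_2$) displayed immediately before the statement. First I would verify that the bilinear form $b=b_1$ actually fits the framework of condition~i) of Theorem \ref{t:length}. Since $b_1(v,w)=\bar b_1(Mv,Nw)$, and since $M,N:V^{-2\theta_1}\to V^0$ are bounded (we use $\theta_1=\theta_2$ here), it suffices to bound $\bar b_1:V^0\times V^0\times V^\gamma\to\mathbb{R}$, which by Proposition~\ref{p:bdd-b}\,b) holds for any $\gamma$ large enough (taking $k=1$ and $\sigma_3>(n+2)/2$ with $\sigma_1=\sigma_2=0$). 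Conditions ii)--iv) are assumed directly, so Theorem~\ref{t:length} applies.

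Next I would plug $(\alpha,\beta)=(-2\theta_1,\,1-2\theta_1)$ into the conclusion of the theorem, $\overline{\|u\|_\beta^2}\lesssim \overline{\|u\|_\alpha^2}+\bigl(\overline{\|u\|_\alpha^2}\bigr)^{1+\frac{\beta-\alpha}{2(\theta-\theta_2-\alpha)}}$. The exponent simplifies because $\beta-\alpha=1$ and, using $\theta_2=\theta_1$, $\theta-\theta_2-\alpha=\theta-\theta_1+2\theta_1=\theta+\theta_1$. Hence the exponent on the second term equals $2+\tfrac{1}{\theta+\theta_1}$, which is precisely the first displayed inequality
\begin{equation*}
\varepsilon=\overline{\|u\|_{1-2\theta_1}^2}\lesssim U^2+U^{2+\frac{1}{\theta+\theta_1}},\qquad U^2=\overline{\|u\|_{-2\theta_1}^2}.
\end{equation*}

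Finally, I would convert to the physical variables. Using the identifications $\tilde u=Nu$ and $\theta_1=\theta_2$, one has $\|\tilde u\|_0\eqsim\|u\|_{-2\theta_1}$ and $\|\nabla\tilde u\|_0\eqsim\|u\|_{1-2\theta_1}$, so that $U^2\eqsim Re^2$ (up to the dimensional factor $l/\nu$) and $\varepsilon\eqsim \overline{\|u\|_{1-2\theta_1}^2}$; taking the fourth root in $l_d^{-1}\eqsim(\varepsilon/\nu^3)^{1/4}$ yields the second displayed bound. The only non-routine step is the verification of condition~i) via Proposition~\ref{p:bdd-b}, but since the regularity requirement on the third slot is free (condition~i) only requires existence of some admissible $\gamma$), this presents no real obstacle; the remainder is bookkeeping of exponents.
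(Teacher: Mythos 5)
Your proposal is correct and follows exactly the route the paper intends: the corollary is obtained by substituting $\alpha=-2\theta_1$, $\beta=1-2\theta_1$, $\theta_2=\theta_1$ into Theorem \ref{t:length} (so that $\beta-\alpha=1$ and $\theta-\theta_2-\alpha=\theta+\theta_1$) and then taking fourth roots for the Kolmogorov statement. Your additional verification of hypothesis i) for $b_1$ via the mapping properties of $M$, $N$ and Proposition \ref{p:bdd-b} is a correct filling-in of a step the paper leaves implicit.
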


\begin{remark}
a) For the simplified Bardina model, our result is consistent with that of~\cite{GH08}, where the authors derive
\begin{equation*}
l_d^{-1} \lesssim Re^{5/8}.
\end{equation*}

b) For the hyper-viscous Navier-Stokes equation and the Navier-Stokes-Voight model, we have the estimates 
\begin{equation*}
l_d^{-1} \lesssim Re^{9/4},
\qquad\textrm{and}\qquad
l_d^{-1} \lesssim Re^{3/4},
\end{equation*}
respectively, that appear to be new.
\end{remark}


\section{Summary}
\label{s:summ}

In this article
we considered a general three-parameter family of regularized Navier-Stokes
and MHD
models on $n$-dimensional smooth compact Riemannian manifolds, with $n \geqs 2$;
this family captures most of the specific regularized models that have been 
proposed and analyzed in the literature.
Well-studied members of this family include the Navier-Stokes equations,
the Navier-Stokes-$\alpha$ model, the Leray-$\alpha$ model,
the Modified Leray-$\alpha$ model, the Simplified Bardina model,
the Navier-Stokes-Voight model, the Navier-Stokes-$\alpha$-like models,
and several MHD models;
the general model also captures
a number of additional models that have not been specifically 
identified or analyzed in the literature.
We gave a unified analysis of this entire family of models using essentially
only abstract mapping properties of the principal dissipation and smoothing
operators, and then used assumptions about the specific form of the 
parameterizations, leading to specific models, only when necessary to obtain 
the sharpest results.
In~\S\ref{s:prelim}, we established our notation and gave some basic 
preliminary results for the operators appearing in the general regularized 
model.
In~\S\ref{s:well}, we built some well-posedness results for the general model; 
this included existence results (\S\ref{ss:exist}), 
regularity results (\S\ref{ss:reg}),
and uniqueness and stability results (\S\ref{ss:stab}).
In~\S\ref{s:pert} we established some results for singular perturbations,
which as special cases include the inviscid limit of viscous models and the 
$\alpha \rightarrow 0$ limit in $\alpha$ models;
this involved a separate analysis of the linear (\S\ref{ss:pert-lin}) and
nonlinear (\S\ref{ss:pert-nonlin}) terms.
In~\S\ref{s:attr}, we showed existence of a global attractor for the 
general model (\S\ref{ss:attr-exist}), 
and then gave estimates for the dimension 
of the global attractor (\S\ref{ss:attr-dim}).
In~\S\ref{s:determining} we
established some results on determining operators for the
two distinct subfamilies of dissipative (\S\ref{ss:dis})
and non-dissipative (\S\ref{ss:non-dis1}) models. In~\S\ref{s:Reynolds}, we established length-scale estimates for the generalized model; this makes it possible to recast our estimates for the number of freedom of turbulent flows given in~\S\ref{s:determining} in terms of the Reynolds number.

To make the paper reasonably self-contained, 
in Appendix~\ref{s:app} we also included some supporting material on 
Gronwall-type inequalities (Appendix~\ref{ss:gronwall}),
spaces involving time (Appendix~\ref{ss:anis}),
and
Sobolev spaces (Appendix~\ref{ss:sobolev}).
In addition to establishing a number of technical results for all
models in this general three-parameter family, 
the framework we developed can recover 
most of the existing existence, regularity, uniqueness, stability, attractor
existence and dimension, and determining operator results for the well-known
specific members of this family of regularized Navier-Stokes and MHD models.
Analyzing the more abstract generalized model allows for a simpler analysis 
that helps bring out the common structure of the various models,
and also helps clarify the core common features of many of the
new and existing results.
More general MHD models can be analyzed using the framework with only
minor modifications as outlined in the text.

In ongoing work, we are extending the unified analysis presented here to establish partial regularity results for the three-parameter generalized model, in the spirit of~\cite{CaKoNi82a}.  In~\cite{KaPa02a}, it was found that for the hyper-dissipative model, there exists a solution for which the Hausdorff dimension of the space-time singular set is at most $5-4\theta$.  We would like to extend this result for our generalized equation to see the interplay between the nonlinearity, which is controlled by two parameters $\theta_1$ and $\theta_2$ and the dissipative term, which is controlled by the parameter $\theta$ in the model equations.

In~\cite{GuPr05a} the notion of suitable weak solutions for NSE was defined.  The definition introduces two parameters: a discretization scale $h$ and a large eddy scale $\veps$.   We also plan to extend this unified analysis to find the interplay between the nonlinear term and dissipative term that will satisfy the proposed list of mathematical criteria when establishing a reasonable definition of large eddy simulation (LES) models.  In~\cite{GuPr05a}, the authors mentioned some technical or fundamental difficulties when establishing the convergence of the discrete approximations of the NS-$\alpha$ model to suitable weak solutions of the NSE.  We would like to use the unified analysis to see under what conditions we can recover the relationship between the regularizing and discretization parameters that will allow the model equation to be a suitable approximation to NSE.

\section{Acknowledgements}
We thank the reviewers for several suggestions which improved the paper, including length-scale estimates in terms of the Reynolds number. MH was supported in part by NSF Awards~0715146, 0411723, 
and 0511766, and DOE Awards DE-FG02-05ER25707 and DE-FG02-04ER25620.
EL and GT were were supported in part by NSF Awards~0715146 and 0411723.

\appendix

\section{Some key technical tools and some supporting results}
\label{s:app}

\subsection{A Gr\"onwall type inequality}
\label{ss:gronwall}

The following is a slight generalization of the Gr\"onwall-type inequality appeared in~\cite{FMTT83} and~\cite{JoTi91}.

\begin{lemma}
   \label{lemma:gronwall_2}
Let $T>0$ be fixed, and let $x$, $y$, and $z$ be locally integrable 
and real-valued functions on $(0,\infty)$, satisfying
\begin{equation*}
\begin{split}
\liminf_{t \rightarrow \infty}
   \int_{t}^{t+T} x(\tau) d\tau > 0,
\quad
\limsup_{t \rightarrow \infty}
    \int_{t}^{t+T} x^-(\tau) d\tau < \infty,
\quad
\lim_{t \rightarrow \infty}
     \int_{t}^{t+T} y^+(\tau) d\tau = 0,
\end{split}
\end{equation*}
where $x^-=\max\{-x,0\}$ and $y^+=\max\{y,0\}$.
If $\xi$ is an absolutely continuous non-negative function on
$(0,\infty)$, and $\xi$ satisfies the following differential inequality
$$
\xi' + x \xi \le y + y \xi^p, \qquad \text{a.e.~on}~(0,\infty),
$$
for some constant $p\in(0,1]$, then $\lim_{t \rightarrow \infty} \xi(t) = 0$.
\end{lemma}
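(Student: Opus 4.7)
The plan is to adapt the uniform Gr\"onwall argument of Foias-Manley-Temam-Treve, with a preliminary step that absorbs the superlinear forcing term $y\xi^p$. Since $p\in(0,1]$ and $\xi\geq0$, the elementary bound $\xi^p\leq 1+\xi$ yields $y+y\xi^p\leq y^+(2+\xi)$, so the differential inequality reduces to
$$
\xi' + \tilde x\,\xi \leq 2y^+, \qquad \tilde x := x - y^+.
$$
The hypotheses transfer to $\tilde x$: $\int_t^{t+T}\tilde x^- \leq \int_t^{t+T}(x^- + y^+)$ remains bounded above while $\liminf\int_t^{t+T}\tilde x = \liminf\int_t^{t+T} x > 0$ because $\int_t^{t+T} y^+\to 0$. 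So it suffices to prove the conclusion under the simpler hypotheses $y\geq 0$ and $\xi'+x\xi\leq y$ with the same asymptotic conditions on $x$ and $y$.

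The next step is to derive an exponential bound on the integrating factor. I claim there exist $\mu>0$, $K<\infty$, and $t^*$ such that for all $t^*\leq s\leq t$,
$$
\int_s^t x(\tau)\,d\tau \geq \mu(t-s) - K.
$$
To see this, one chooses $t^*$ so that $\int_\tau^{\tau+T} x \geq \alpha/2$ and $\int_\tau^{\tau+T} x^-\leq \beta$ for all $\tau\geq t^*$ (with $\alpha,\beta$ coming from the hypotheses), then partitions $[s,t]$ into $\lfloor(t-s)/T\rfloor$ full intervals of length $T$ plus a residual interval; on the full intervals each contribution is at least $\alpha/2$, and the residual interval is controlled from below by $-\int x^-\geq-\beta$. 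This gives $\exp(-\int_s^t x)\leq e^K e^{-\mu(t-s)}$.

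Variation of parameters then yields, for $t\geq t_0\geq t^*$,
$$
\xi(t)\leq e^K e^{-\mu(t-t_0)}\xi(t_0) + 2e^K\int_{t_0}^t e^{-\mu(t-s)} y^+(s)\,ds.
$$
The first term tends to $0$ as $t\to\infty$ with $t_0$ fixed. For the second, partition $[t_0,t]$ \emph{backward} from $t$ into intervals of length $T$: on the $k$-th interval $[t-kT,t-(k-1)T]$ the exponential factor is at most $e^{-\mu(k-1)T}$, and the $y^+$ integral is bounded by $\eps^*(t_0):=\sup_{\tau\geq t_0}\int_\tau^{\tau+T} y^+$. Summing the resulting geometric series gives
$$
\int_{t_0}^t e^{-\mu(t-s)} y^+(s)\,ds \leq \frac{\eps^*(t_0)}{1-e^{-\mu T}}.
$$
Since $\eps^*(t_0)\to 0$ as $t_0\to\infty$ by the hypothesis on $y^+$, choosing $t_0$ sufficiently large to make $\eps^*(t_0)$ small and then letting $t\to\infty$ shows $\limsup_{t\to\infty}\xi(t)$ can be made arbitrarily small, yielding $\lim\xi=0$.

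The main obstacle is the exponential-decay estimate for the homogeneous factor: one must handle the incomplete interval at the right endpoint of $[s,t]$ uniformly in $s,t$, and this is exactly what the uniform $\limsup$ bound on $\int_t^{t+T}x^-$ provides. Without that hypothesis one could not rule out long excursions where $x$ is strongly negative, and the integrating factor would fail to be exponentially decaying.
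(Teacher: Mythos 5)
Your argument is correct. Note that the paper does not actually prove this lemma: it only states it in Appendix A.1 as ``a slight generalization'' of the uniform Gr\"onwall inequality of \cite{FMTT83} and \cite{JoTi91}, so there is no in-paper proof to compare against. Your proof is essentially the standard argument from those references --- an integrating factor combined with the lower bound $\int_s^t x \geq \mu(t-s)-K$ obtained by partitioning $[s,t]$ into length-$T$ blocks, with the incomplete block controlled by the uniform bound on $\int_t^{t+T}x^-$ --- and the only genuinely new ingredient needed for the generalization, namely absorbing the superlinear term via $\xi^p\leq 1+\xi$ for $p\in(0,1]$ and shifting $y^+$ into the coefficient $\tilde x = x-y^+$, is handled cleanly and the transfer of the three integral hypotheses to $\tilde x$ is verified. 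Two routine details you elide but which cause no trouble: the integration of the a.e.\ differential inequality is legitimate because $\xi$ is absolutely continuous and $e^{\int x}$ is locally absolutely continuous and locally bounded, so the product rule applies a.e.\ and the inequality integrates; and in the backward partition of $[t_0,t]$ the last block must be truncated at $t_0$, where it is still contained in $[t_0,t_0+T]$ and hence contributes at most $\eps^*(t_0)$. (The variable $z$ in the lemma statement is vestigial and correctly ignored.)
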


\subsection{Spaces involving time}
\label{ss:anis}

Let us recall the following well known result.
A proof can be found in~\cite{Tema77}.

\begin{theorem}\label{t:aubin}
Let $X\hookrightarrow Y\hookrightarrow Z$ be reflexive Banach spaces, with $X\hookrightarrow Y$ compact.
Let $p,q>1$ be constants, and define
\begin{equation*}
\mathcal{Y} = \mathcal{Y}(0,T;p,q;X,Z) 
= 
\{v\in L^{p}(0,T;X):\frac{dv}{dt}\in L^{q}(0,T;Z)\}.
\end{equation*}
Then we have $\mathcal{Y}\hookrightarrow C(0,T;Z)$ and the embedding $\mathcal{Y}\hookrightarrow L^{p}(0,T;Y)$ is compact.
\end{theorem}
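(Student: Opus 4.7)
The plan is to establish the two claims in sequence: first the continuous embedding $\mathcal{Y}\hookrightarrow C(0,T;Z)$ by a fundamental-theorem-of-calculus argument in the Bochner setting, and then the compact embedding $\mathcal{Y}\hookrightarrow L^{p}(0,T;Y)$ via Ehrling's interpolation inequality combined with an equicontinuity-in-time argument of Fr\'echet--Kolmogorov type.

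For the continuous embedding, I would first note that if $v\in\mathcal{Y}$, then $v\in L^{p}(0,T;X)\hookrightarrow L^{1}(0,T;Z)$ and $\dot v\in L^{q}(0,T;Z)\hookrightarrow L^{1}(0,T;Z)$, so $v$ admits a representative that is absolutely continuous as a $Z$-valued function and satisfies $v(t)-v(s)=\int_{s}^{t}\dot v(\tau)\,d\tau$ in $Z$. H\"older's inequality then yields $\|v(t)-v(s)\|_{Z}\leq |t-s|^{1-1/q}\|\dot v\|_{L^{q}(0,T;Z)}$, whence $v\in C(0,T;Z)$. The embedding estimate $\|v\|_{C(0,T;Z)}\lesssim\|v\|_{L^{p}(0,T;X)}+\|\dot v\|_{L^{q}(0,T;Z)}$ then follows by writing $v(t)=v(s)+\int_{s}^{t}\dot v(\tau)\,d\tau$, averaging over $s\in(0,T)$ to control one pointwise value by the space-time norm of $v$, and absorbing the remaining term using the H\"older estimate just derived.

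For the compact embedding, I would take a bounded sequence $\{v_{n}\}\subset\mathcal{Y}$ and aim to extract a subsequence converging in $L^{p}(0,T;Y)$. The first key input is Ehrling's lemma, a consequence of the compact embedding $X\hookrightarrow Y$ and the continuity of $Y\hookrightarrow Z$: for every $\eta>0$ there is $C_{\eta}>0$ with $\|w\|_{Y}\leq\eta\|w\|_{X}+C_{\eta}\|w\|_{Z}$ for all $w\in X$. Applying this pointwise in $t$ and integrating the $p$-th power gives
\begin{equation*}
\|v_{n}-v_{m}\|_{L^{p}(0,T;Y)}\lesssim\eta\,\|v_{n}-v_{m}\|_{L^{p}(0,T;X)}+C_{\eta}\,\|v_{n}-v_{m}\|_{L^{p}(0,T;Z)},
\end{equation*}
and since the first factor on the right is uniformly bounded, it suffices to extract a subsequence that is Cauchy in $L^{p}(0,T;Z)$. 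I would produce this via the Fr\'echet--Kolmogorov criterion, which requires both equicontinuity of time translates in $L^{p}(0,T;Z)$ and pointwise-in-$t$ relative compactness of the values in $Z$.

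The main obstacle will be reconciling the possible mismatch between $p$ and $q$ when handling the time translates, since the derivative bound naturally lives in $L^{q}$ rather than $L^{p}$. The cleanest way around this is to work with time-mollified approximants $v_{n}^{\varepsilon}=\rho_{\varepsilon}\ast v_{n}$: on the one hand, the pointwise estimate $\|v_{n}^{\varepsilon}(t)-v_{n}(t)\|_{Z}\lesssim \varepsilon^{1-1/q}\|\dot v_{n}\|_{L^{q}(0,T;Z)}$ combined with H\"older in time yields $\|v_{n}^{\varepsilon}-v_{n}\|_{L^{p}(0,T;Z)}\to 0$ uniformly in $n$ as $\varepsilon\to 0$; on the other hand, for each fixed $\varepsilon>0$ the family $\{v_{n}^{\varepsilon}\}$ is relatively compact in $C(0,T;Y)$ by Arzel\`a--Ascoli, with equicontinuity in $t$ coming from the smoothing by $\rho_{\varepsilon}$ and pointwise relative compactness in $Y$ from the uniform $L^{p}(X)$ bound together with the compactness of $X\hookrightarrow Y$. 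A standard diagonal extraction across $\varepsilon\to 0$ then yields the desired $L^{p}(0,T;Y)$-convergent subsequence, completing the argument.
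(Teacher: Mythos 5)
The paper does not prove this statement at all: it is recalled as a known result (the Aubin--Lions compactness theorem) with the proof deferred to the citation \cite{Tema77}, so there is no in-paper argument to compare against. Your proposal is correct and is essentially the classical proof found in that reference --- the absolute-continuity/H\"older argument for the embedding into $C(0,T;Z)$, then Ehrling's lemma to reduce compactness in $L^{p}(0,T;Y)$ to compactness in $L^{p}(0,T;Z)$, which you obtain by a time-regularization (equivalently, translation-estimate) argument; the only points glossed over are routine, namely extending $v_n$ past the endpoints of $[0,T]$ so the mollification is defined there.
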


\subsection{Multiplication in Sobolev spaces}
\label{ss:sobolev}

With $s\in\R$, let $H^s$ be the standard Sobolev space on an $n$-dimensional compact Riemannian manifold $M$.
We state here a well known result on pointwise multiplication of functions in Sobolev spaces.

\begin{lemma}\label{l:sob-hol}
Let $s$, $s_1$, and $s_2$ be real numbers satisfying
\begin{equation*}
s_1+s_2\geqs0,
\qquad
s_i\geqs s,
\qquad
\textrm{and}
\qquad
s_1+s_2-s > \textstyle\frac{n}{2},
\end{equation*}
where the strictness of the last two inequalities can be interchanged if $s\in\N_0$.
Then, the pointwise multiplication of functions extends uniquely to a continuous bilinear map
\begin{equation*}
H^{s_1}\otimes H^{s_2}\rightarrow H^{s}.
\end{equation*}
\end{lemma}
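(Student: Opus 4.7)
The plan is to reduce to the Euclidean setting using a finite smooth partition of unity subordinate to a coordinate atlas on the compact manifold $M$, since pointwise multiplication is a local operation and the Sobolev norms on $M$ are equivalent to sums of local Euclidean Sobolev norms transported through the charts. It then suffices to prove $\|uv\|_{H^s(\R^n)} \lesssim \|u\|_{H^{s_1}(\R^n)} \|v\|_{H^{s_2}(\R^n)}$ for Schwartz-class $u,v$. By duality this is equivalent to the trilinear estimate $|\int uvw\,dx| \lesssim \|u\|_{H^{t_1}} \|v\|_{H^{t_2}} \|w\|_{H^{t_3}}$ with $t_1=s_1$, $t_2=s_2$, $t_3=-s$, under the pleasantly symmetric conditions $t_i + t_j \geq 0$ for all $i\neq j$ together with $t_1+t_2+t_3 > n/2$. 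Recasting the hypothesis in this symmetric form is the first conceptual step and suggests the analytic machinery to use.

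To prove the trilinear estimate I would decompose each of $u$, $v$, $w$ via a Littlewood--Paley partition, so that $\|f\|_{H^t}^2 \sim \sum_j 2^{2jt}\|\Delta_j f\|_{L^2}^2$, and expand the integral as $\sum_{j,k,\ell}\int \Delta_j u\,\Delta_k v\,\Delta_\ell w$. Fourier-support considerations force the two largest of the indices to be comparable, so by symmetry it suffices to treat the block with $j \geq k$ and $j \sim \ell$. For this block H\"older's inequality combined with Bernstein's inequality $\|\Delta_k v\|_{L^\infty}\lesssim 2^{kn/2}\|\Delta_k v\|_{L^2}$ gives
\begin{equation*}
\Bigl|\int \Delta_j u\,\Delta_k v\,\Delta_\ell w\Bigr| \lesssim 2^{kn/2}\|\Delta_j u\|_{L^2}\|\Delta_k v\|_{L^2}\|\Delta_\ell w\|_{L^2}.
\end{equation*}
Inserting the weights $2^{-jt_1}2^{-kt_2}2^{-\ell t_3}$, applying Cauchy--Schwarz on the paired indices $(j,\ell)$, and summing the geometric series in $k \leq j$ yields the bound: $t_1+t_3\geq 0$ ensures square-summability on the comparable diagonal, $t_1+t_2\geq 0$ and $t_2+t_3\geq 0$ control the lower-frequency sum, and the strict inequality $t_1+t_2+t_3 > n/2$ closes the geometric summation involving the $2^{kn/2}$ factor.

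The main obstacle I expect is the endpoint case flagged in the statement, where the strict inequality $s_1+s_2-s > n/2$ is relaxed to non-strict (and similarly $s_i \geq s$ can become tight) under the assumption that $s$ is a nonnegative integer; the paraproduct sums become logarithmically divergent at this threshold. To handle this case I would switch to an integer-order argument: for such $s$, expand $\partial^\alpha(uv) = \sum_{\beta \leq \alpha}\binom{\alpha}{\beta}\partial^\beta u\,\partial^{\alpha-\beta}v$ for $|\alpha|\leq s$ and estimate each term via H\"older's inequality combined with the Sobolev embeddings $H^{s_1-|\beta|}\hookrightarrow L^{p}$ and $H^{s_2-|\alpha|+|\beta|}\hookrightarrow L^{q}$ with $\tfrac1p + \tfrac1q = \tfrac12$. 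When a factor lands on the borderline Sobolev exponent $n/2$, I would replace $H^s$ on one side by the slightly stronger Besov space $B^s_{2,1}$ (which embeds into $L^\infty$ at the critical index and coincides up to a logarithm with $H^s$ at integer order), giving just enough room to close the endpoint. Together with the interior paraproduct estimate above, this recovers the full range of parameters and completes the proof.
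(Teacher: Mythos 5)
Your proposal takes a genuinely different route from the paper, for the simple reason that the paper does not actually prove this lemma: its ``proof'' is a citation to Zolesio (and Maxwell) for the case $s\geq 0$ together with a one-line remark that duality extends the result to $s<0$. You instead supply a self-contained argument: localization to $\R^n$ by a partition of unity, passage to the symmetric trilinear estimate with exponents $(t_1,t_2,t_3)=(s_1,s_2,-s)$ satisfying $t_i+t_j\geq 0$ for $i\neq j$ and $t_1+t_2+t_3>\frac n2$ (which correctly repackages the lemma's hypotheses and absorbs the paper's duality step from the outset, so all sign cases of $s$ are handled at once), and then a Littlewood--Paley expansion in which only blocks with the two largest frequencies comparable survive, estimated by H\"older plus Bernstein on the lowest-frequency factor. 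Your accounting of which hypothesis closes which sum is essentially right: in the block $k\leq j\sim\ell$ one needs $t_1+t_3\geq0$ for the diagonal Cauchy--Schwarz when the low-frequency factor is supercritical, and $t_1+t_2+t_3>\frac n2$ to absorb the $2^{kn/2}$ Bernstein loss otherwise, with the remaining pair conditions used in the two symmetric blocks. This is the standard and correct proof of the non-endpoint case, and it buys a quantitative, checkable argument where the paper offers only a pointer to the literature.

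The one step that does not survive scrutiny is your treatment of the integer endpoint $s\in\mathbb{N}_0$, $s_1+s_2-s=\frac n2$, $s_i>s$. The Leibniz reduction is the right move, but the proposed patch --- ``replace $H^{n/2}$ by $B^{n/2}_{2,1}$'' --- is not available: $B^{n/2}_{2,1}$ is a strictly smaller space than $H^{n/2}=B^{n/2}_{2,2}$, and a function known only to lie in $H^{s_1-|\beta|}$ with $s_1-|\beta|=\frac n2$ cannot be promoted to it, nor do the two spaces ``coincide up to a logarithm.'' Fortunately the patch is unnecessary, because the dangerous pairing never occurs under the interchanged hypotheses. Indeed, in each Leibniz term both Sobolev indices are strictly positive, since $s_1-|\beta|\geq s_1-s>0$ and $s_2-|\alpha|+|\beta|\geq s_2-s>0$, and their sum is $s_1+s_2-|\alpha|\geq\frac n2$. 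If both indices lie in $(0,\frac n2)$, H\"older with the Sobolev embeddings gives $\frac1p+\frac1q\leq\frac12$ and hence an $L^2$ bound on a bounded domain; if one index exceeds $\frac n2$ that factor is in $L^\infty$ and the other is in $L^2$; and if one index equals $\frac n2$ exactly, that factor lies in $L^p$ for every finite $p$ while the other has \emph{strictly} positive index and so lies in some $L^q$ with $q>2$, which again closes by H\"older. The genuinely unbounded configuration $H^{n/2}\cdot L^2$ is excluded precisely by the strictness $s_i>s$ that the endpoint hypotheses impose. With the Besov sentence deleted and replaced by this observation, your argument is complete and correct.
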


\begin{proof}
A proof is given in~\cite{jZ77} (see also~\cite{Ma04a}) for the case $s\geq0$, and by using a duality argument one can easily extend the proof to negative values of $s$.
\end{proof}

\bibliographystyle{plain}
\bibliography{../bib/ref-gn,../bib/papers,../bib/books,../bib/mjh}

\vspace*{0.5cm}

\end{document}